\newtheorem{theorem}{\bf Theorem}[section]
\newtheorem{corollary}[theorem]{\bf Corollary}
\newtheorem{definition}[theorem]{\bf Definition}
\newtheorem{lemma}[theorem]{\bf Lemma}
\newtheorem{proposition}[theorem]{\bf Proposition}
\newtheorem{example}[theorem]{Example}
\newtheorem{remark}[theorem]{Remark}
\newenvironment{proof}[1][Proof]{\noindent\textbf{#1.} }{\ \rule{0.5em}{0.5em} \medskip }
\author[1]{M. Guerra \footnote{E-mail address: \href{mailto:mguerra@iseg.ulisboa.pt}{mguerra@iseg.ulisboa.pt}}}
\author[1]{A. B. de Moura \footnote{E-mail address:
    \href{mailto:amoura@iseg.ulisboa.pt}{amoura@iseg.ulisboa.pt}}}
\affil[1]{ISEG - School of Economics and Management,
  Universidade de Lisboa; REM – Research in Economics and Mathematics, CEMAPRE}
\title{Reinsurance of multiple risks with generic dependence structures}
\date{\vspace{-6ex}}
\begin{document}

\maketitle
\allowdisplaybreaks

\begin{abstract}
We consider the optimal reinsurance problem from the point of view of a direct insurer owning several dependent risks, assuming a maximal
expected utility criterion and independent negotiation of reinsurance for each risk.
Without any particular hypothesis on the dependency structure, we show that optimal treaties
exist in a class of independent randomized contracts. We derive optimality conditions and show
that under mild assumptions the optimal contracts are of classical (non-randomized) type.
A specific form of the optimality conditions applies in that case.
We present a numerical scheme to solve the optimality conditions.
\end{abstract}

\vspace{3mm}

\noindent {\bf Keywords:} Reinsurance, Dependent Risks, Premium
Calculation Principles, Expected Utility, Randomized reinsurance treaties
\vspace{3mm}


\section{Introduction}\label{S introduction}

Reinsurance constitutes a risk mitigation strategy and
an essential tool in risk management. 
By transferring part of the risk to the reinsurer, the cedent company
seeks a trade-off between profit, which is reduced by the reinsurance
premium, and safety, which is increased by the reduction in exposure to
the underlying risk.
A large amount of works can be found in literature concerning optimal
reinsurance strategies, as this problem has been for long considered in the
actuarial community. The first works date as far back as the 60s, with
the seminal paper of Borch \cite{B60}.
It has been the subject of active research not only in the
field of actuarial science, where the risk transfer contract between
two insurance companies is usually analysed within a one-period
setting, see for instance 
\cite{
G04, 
CW12, 
CLT17, AC19, HW19},
but
also in the financial mathematical context, within  a 
dynamical framework usually in conjunction with investment
strategies, see for instance
\cite{
CT16, 
GVY18}. We refer to \cite{ABT17} for a comprehensive overview of the
literature on optimal reinsurance.

In this work we consider the optimal reinsurance problem within a
one-period setting and from the cedent's perspective, the
interest of the reinsurer being enclosed in the calculation
principles considered for the reinsurance premium. 
The goal is
the maximization of an expected concave utility of the insurer's
wealth.

In most works regarding optimal reinsurance, independence is assumed. 
Indeed, for many years dependence has not been considered in the study  of optimal risk transfer, possibly due to its complexity.
Nowadays, it is widely accepted that dependencies play a significant role in risk management and  several works can be found accounting for this fact.
One of the first works including the effects of dependence when investigating optimal forms or risk transfer is \cite{C05}.
There, dependencies of two classes of insurance businesses, through the number of claims, are included by means of a bivariate Poisson.
Other authors have considered the optimal reinsurance problem under dependence between claim numbers, such as \cite{ZHD15}, in a one-period setting, and \cite{BLX16}, in a dynamic setting. 
In \cite{CSY14}, the authors do not assume any particular dependence structure, as they argue it is often difficult to determine it.
They propose instead to use a minimax optimal reinsurance decision formulation, in which the worst-case scenario is first identified.
In \cite{CW12},
positive dependencies in the individual risk are considered by means of the stochastic ordering.

Due to the analytical complexity of problems including dependencies, many works propose numerical frameworks \cite{BCZ13,ABCHK17,AGHK17,ZJLC16}.
It is also worth mentioning the empirical approach proposed in \cite{TW2014} and \cite{SWZ17}, where reinsurance models are formulated based on data observation, without explicitly assuming the distribution of the underlying risks, which allows for the use of programming procedures to obtain an optimal solution. 

Stop-loss  was found to be optimal in several works which consider the expected value premium principle and various optimality criteria, both in the independent \cite{B60,G04,CSYY14} and in the dependent cases \cite{CW12,ZHD15}.

Most authors include various constraints on the type of reinsurance contracts being considered.
For example, in \cite{B60,G04,ZHD15} the optimal solution is sought among combinations of quota-share and stop-loss treaties.
A very common constraint, included in most references above, is that the retained risk after reinsurance is an increasing function of the underlying risk.
This is imposed to avoid solutions allowing for moral hazard, but other constraints with different motivations may also be considered.

In this work we will impose no further constraints besides that the ceded risk should be positive and should not exceed the underlying risk. 
We hope that by characterizing the solutions of the problem free of constraints we contribute to a better understanding of the necessity, role and impact of any constraints that might be introduced.
The analysis of the impact of constraints in the optimal solution will be the subject of future work.

Most literature on this topic considers forms or reinsurance of a deterministic nature, in the sense that for a given risk $X$, solutions are seek in a set of functions of $X$ defining for each value of loss, how much of that loss is ceded to the reinsurer.
This is the traditional and intuitive way of formulating the problem.
However, it has been shown that randomized treaties may outperform deterministic ones, at least with respect to the criterion of ruin probability.
More precisely, in
\cite{G04} the authors observe that randomized reinsurance contracts may lead to lower ruin probability than deterministic ones, when an upper constraint on the  reinsurance premium is imposed.
The randomization improvement decreases as the probability mass of every atom decreases.
The randomized decision can also be eliminated by  adjusting the upper limit on the price of reinsurance.
In \cite{AC19} the authors explore the potential of randomizing reinsurance treaties to obtain optimal reinsurance strategies minimizing VaR under the
expected value premium principle.
They provide a possible interpretation for the randomization of the reinsurance treaty as the default risk of the reinsurer, since the indemnity may not be paid with a certain probability.
In that paper, it is also argued that randomized treaties pose less moral hazard issues since, due to randomness, it is unclear \textit{a priori} who will have to pay the claim.
The authors study the possibility of implementing additional randomness in the settlement of risk transfer and show that randomizing the classical stop-loss can be beneficial for the insurer. 
In \cite{GC12}, randomization of the reinsurance treaties is used as a mathematical tool to find treaties minimizing several quantile risk measures when premia are calculated by a coherent risk measure. 
In the present work, randomized contracts will also serve as a mathematical tool to prove existence of the optimal contract when the underlying risks are dependent through an arbitrary joint distribution.
To the best of our knowledge, this is the first time randomized treaties are analysed under dependencies. 

In this work we consider the optimal reinsurance problem of $n$ dependent risks.
By risk we mean the aggregate claims of a line of business, a portfolio of policies or a policy. 
The dependence structure is arbitrary, defined through a generic joint distribution function.
We assume that reinsurance is negotiated separately (independently) for each risk, and each premium is calculated by a (possibly different) function of moments of the ceded risk. 
The cedent's criterion is the maximization of the expected value of a concave utility function of the overall retained risk, net of reinsurance premia.
We introduce the class of conditionally independent randomized strategies and show that it contains an optimal strategy.
Optimality conditions are obtained, and it is shown that under mild conditions, the optimal strategy is deterministic.

This paper is organized as follows.
In Section \ref{S optimization problem}, we formulate the optimization problem.
In Section \ref{S existence} the class of randomized reinsurance strategies is introduced together with the proper probability spaces, and the existence result is proved. 
In Section  \ref{S optimality conds}, we provide necessary optimality conditions.
In Section \ref{S deterministic} it is shown that under very general conditions the optimal treaty is deterministic.
We conclude the paper in Section \ref{S algorithm} with the presentation of a general numerical scheme to solve the optimality conditions. Some numerical examples are given at the end of the section.

\section{The optimization problem}\label{S optimization problem}

We consider a portfolio of
$n\geq 2$
risks. 
Let
$X_{i}$,
$i=1,2,...,n$
denote the aggregate value of claims placed under the $i$-th risk on a given period of time (say, one year).
$X=\left(X_1,X_2,...,X_n\right) $
is a non-negative random vector with joint probability law
$\mu_X$.

The direct insurer acquires a reinsurance policy for each risk separately.
Each of these policies is a measurable function
$Z_i$
such that
\begin{equation}
\Pr \left\{ 0\leq Z_i\left( X_i \right) \leq X_i \right\} =1.  \label{Z1}
\end{equation}
For each
$i=1,2,...,n$,
let
$\mathcal{Z}_{i}$
denote the set of measurable functions satisfying (\ref{Z1}), and let
$\mathcal{Z}=\prod\limits_{i=1}^{n} \mathcal{Z}_{i}$, be
the cartesian product of all
$\mathcal{Z}_{i}$.

For each risk, the corresponding policy is priced by a functional
$P_{i}: \mathcal{Z}_{i}\mapsto \left[ 0,+\infty \right] $,
depending only on the probability law of
$Z_i(X_i)$.
In this paper, we assume that these premia calculation principles are of type
\begin{equation}
\label{Eq premium moments}
P_i(Z_i) = \Psi_i \left( \mathbb E Z_i ,\mathbb E Z_i^2 , \ldots , \mathbb E Z_i^{k_i} \right) ,
\end{equation}
where
$\Psi_i:[0,+\infty[^{k_i} \mapsto [0,+\infty[$,
$i=1,2,\ldots, n$
are continuous functions.

The insurer's net profit after reinsurance is
\begin{equation}
\label{Eq net profit}
L_{Z}=c-\sum_{i=1}^{n}\left( P_{i}\left( Z_{i}\right) +X_{i}-Z_{i}\left( X_{i}\right) \right) ,
\end{equation}
where
$c$
is the portfolio's aggregate premium income, net of non-claim refunding expenses.
Thus,
$L_Z$
is a random variable taking values in the interval
$]-\infty,c]$.

We assume that the insurer aims to choose a reinsurance strategy
$Z=(Z_1,Z_2, \ldots , Z_n) \in \mathcal Z$
maximizing the expected utility of net profit, i.e., maximizing the functional
\begin{equation}
\label{Eq expected utility}
\rho(Z) = \mathbb EU(L_Z) 
\qquad Z \in \mathcal Z ,
\end{equation}
where
$U:]-\infty,c] \mapsto \mathbb R$
is a concave nondecreasing function.
Nondecreasing 	monotonicity of $U$ reflects the fact that higher net profit (smaller net loss) is preferred to lower net profit (bigger net loss), while concavity of $U$ introduces risk aversion:
larger losses are valued more, compared to smaller losses.
This effect increases with the ratio $-\frac{U^{\prime \prime}}{U^\prime}$, which is usually called the {\it coefficient of (absolute) risk aversion}, and in the case of the expeonential utility function $U(x)=-e^{-Rx}$, it is constant and equal to the parameter $R$.
By assuming a risk aversion utility function we are considering that the insurance company searches not only to maximize profit, but also to avoid excessive risk exposure.
This behaviour explains the demand for reinsurance as reinsurance mitigates risk but comes at the price of the reinsurance premium, decreasing the profits.

\section{Existence of optimal reinsurance strategy}\label{S existence}

Under the formulation above, existence of an optimal reinsurance strategy in the class
$\mathcal Z$
can not in general be guaranteed.
However, existence in the larger class of {\it random treaties}, defined below, can easily be proved.
Our approach to the problem outlined in the previous section will be to obtain optimality conditions for random treaties and then discuss conditions under which such conditions can only be satisfied by classical treaties of class
$\mathcal Z$.

\subsection{Random treaties} \label{SS random treaties}

First, let us introduce some notation.
For any array
$x=(x_1,x_2, \ldots ,x_n)$,
we will use the usual notation
$x_i$
to denote the
$i^{\mathrm{th}}$
element of
$x$,
and the notation
$x_{[i]}$
to denote the array of
$n-1$
elements obtained from
$x$
by deleting the element
$x_i$,
i.e.
\[
x_{[i]} = (x_1, \ldots, x_{i-1},x_{i+1}, \ldots, x_n ) .
\]

With the notation above, random treaties are defined as follows:

\begin{definition}
\label{D independent random treaties}
A
$\mathbb R^n$-valued random variable
$Z=(Z_1,Z_2, \ldots, Z_n)$
is said to be a vector of (conditionally independent) random treaties or a randomized strategy if the following conditions hold for
$i=1,2, \ldots ,n$:
\begin{enumerate}
\item
$\Pr \left\{ 0 \leq Z_i \leq X_i \right\}= 1$;
\item
The random variable
$Z_i$
is conditionally independent of the random vector
$\left( X_{[i]},Z_{[i]} \right)$,
given
$X_i$.
\end{enumerate}
\end{definition}
The second condition in Definition \ref{D independent random treaties} enforces the assumption that reinsurance is acquired separately for different risks:
given the value of $X_i$, the value of claims and refunds on other risks have no bearing on the value of the refund $Z_i$. 
However, $Z_i$
and
$Z_j$ are not, in general, independent random variables, due to dependency between $X_i$
and
$X_j$.

Reinsurance strategies of class $\mathcal Z$ discussed in Section \ref{S optimization problem} are called {\it deterministic strategies} to distinguish them from randomized strategies defined above.
Notice that for any deterministic strategy
$Z \in \mathcal Z$,
the random vector
$\left( Z_1(X_1), Z_2(X_2), \ldots , Z_n(X_n) \right)$
satisfies Definition \ref{D independent random treaties}.
Thus, the class of deterministic strategies is a subset of the class of randomized strategies.

If expressions \eqref{Eq premium moments}, \eqref{Eq net profit} and
\eqref{Eq expected utility} are well defined for every deterministic strategy, then they are also well defined for every vector of random treaties. 
Thus, we may consider the problem of maximizing \eqref{Eq expected utility} over all vectors of conditionally independent random treaties.

\subsection{Spaces of probability laws}

Since \eqref{Eq expected utility} depends only on the probability law of the random vector
$(X,Z)$, 
we may discuss optimization in terms of probability laws instead of the random vectors inducing such laws.
To this purpose, we will frame our argument in canonical spaces, i.e., we will consider the underlying measurable space to be $\mathbb R^{2n}$ provided with its Borel $\sigma$-algebra, $\mathcal B_{\mathbb R^{2n}}$.
Random variables are Borel-measurable functions $\varphi: \mathbb R^{2n} \mapsto \mathbb R$, and the space of Borel probability measures $\nu: \mathcal B_{\mathbb R^{2n}} \mapsto [0,1]$ is denoted by $\mathcal P$.
Expectations with respect to a particular probability law $\nu \in \mathcal P$ are
\[
\mathbb E^\nu \varphi = \int_{\mathbb R^{2n}} \varphi(u) \, \nu(du) ,
\]
provided the integral exists.
The space
$\mathcal P$
is provided with its weak topology, that is, a sequence
$\{\nu_j \in \mathcal P \}_{j \in \mathbb N}$
is said to converge to
$\nu \in \mathcal P$
if and only if
\[
\lim_{j \mapsto \infty} \mathbb E^{\nu_j} \varphi 
=
\mathbb E^\nu \varphi 
\]
for every continuous bounded 
$\varphi$.

Let $\mathcal P_X$ be the set of all $\nu \in \mathcal P$ such that the marginal probability 
\[
\nu_X(A) = \nu \left( A \times \mathbb R^n \right)
\qquad
A \in \mathcal B_{\mathbb R^n}
\]
is equal to $\mu_X$, the joint probability law of the claim amounts, introduced in Section \ref{S optimization problem}.
We write $\nu \in \mathcal H_X$ if and only if $\nu$ is the probability law of a random vector $(X,Z)$ where $X$ is the vector of claim amounts and $Z$ is some randomized strategy in the sense of Definition \ref{D independent random treaties}.
It is clear that $\mathcal H_X \subset \mathcal P_X \subset \mathcal P$.

Coordinates in $\mathbb R^{2n}$ are indicated as $(x,z) = (x_1, \ldots, x_n, z_1, \ldots , z_n)$.
For a given $\nu \in \mathcal P$, $\nu_X$ and $\nu_Z$ indicate the marginal probability laws:
\[
\nu_X(A) = \nu \left(A \times \mathbb R^n \right),
\quad
\nu_Z(A) = \nu \left( \mathbb R^n \times A \right),
\qquad A \in \mathcal B_{\mathbb R^n}.
\]
Similar notation is used for other marginal probabilities, like $\nu_{X_i}$, $\nu_{(X_i,Z_i)}$, $\nu_{(X_{[i]},Z_{[i]})}$, etc..

Given $\nu \in \mathcal P$ having a strictly positive density function $f$, the conditional probability law of (say) $(X_{[i]},Z_{[i]})$ given $(X_i,Z_i)$ is the family of probability measures
\begin{align*}
\nu_{\left( X_{[i]}, Z_{[i]} \right) | (X_i,Z_i)} (A) = 
\frac
{\int_A f(x,z) dx_{[i]} dz_{[i]}}
{\int_{\mathbb R^{2n-2}} f(x,z) dx_{[i]} dz_{[i]}} 
\qquad
\forall A \in \mathcal B_{\mathbb R^{2n-2}} .
\end{align*}
Notice that, for fixed $A \in \mathcal B_{\mathbb R^{2n-2}}$, $\nu_{\left( X_{[i]}, Z_{[i]} \right) | (X_i,Z_i)} (A) $ is a measurable function of $(x_i,z_i)$ defined up to null subsets of $\mathbb R^2$.

In the following, we will need to consider probability laws which do not have a density function and/or do not have full support.
Thus, we will consider regular conditional probability laws.
For a given $\nu \in \mathcal P$, a mapping $\nu_{\left( X_{[i]}, Z_{[i]} \right) | (X_i,Z_i)}=Q: \mathbb R^2 \times \mathcal B_{\mathbb R^{2n-2}} \mapsto [0,1]$ is a regular conditional probability law of $\left( X_{[i]}, Z_{[i]} \right) $ given $(X_i,Z_i)$ if it satisfies the following conditions:
\begin{enumerate}[label=(\roman*)]
\item
For every
$B \in \mathcal B_{\mathbb R^{2n-2}}$
(fixed), the map
$(x_i,z_i) \mapsto Q((x_i,z_i),B)$
is measurable with respect to the Borel
$\sigma$-algebra
of
$\mathbb R^2$;

\item\label{Cd measure}
There is a set
$A \in \mathcal B_{\mathbb R^2}$
such that
$\nu_{(X_i,Z_i)} \left( A \right) = 1$
and for each
$(x_i,z_i) \in A$
(fixed) the map
$B \mapsto Q((x_i,z_i),B)$
is a probability measure in
$\mathcal B_{\mathbb R^{2n-2}}$;

\item
For every
$A \in \mathcal B_{\mathbb R^2}$,
$B \in \mathcal B_{\mathbb R^{2n-2}}$,
\begin{align*}
&
\int_A Q((x_i,z_i),B) \, \nu_{(X_i,Z_i)}(d(x_i,z_i)) =
\nu \left\{ (x,z) \in \mathbb R^{2n}:(x_i,z_i) \in A, \ (x_{[i]},z_{[i]}) \in B \right\} .
\end{align*}
\end{enumerate}
Since
$\left( \mathbb R^{2n}, \mathcal B_{\mathbb R^{2n}} \right)$
is a standard measurable space, every $\nu \in \mathcal P$ admits a regular conditional probability law
$\nu_{(X_{[i]},Z_{[i]})|(X_i,Z_i)}$ (see e.g. \cite{Cinlar11}, Theorem IV.2.7).

Conditional expectation of a random variable $\varphi: \mathbb R^{2n} \mapsto \mathbb R$ is defined as
\[
\mathbb E^\nu_{(X_{[i]},Z_{[i]})|(X_i,X_i)} \varphi =
\int_{\mathbb R^{2n-2}} \varphi(x,z) \, Q(x_i,z_i,d(x_{[i]},z_{[i]})),
\]
provided the integral on the right-hand side is well defined for
$\nu_{(X_i,Z_i)}$-almost every
$(x_i,z_i) \in \mathbb R^2$.
In that case, it is a measurable function of $(x_i,z_i)$.

If conditioned and conditioning coordinates do not span the whole space $\mathbb R^{2n}$, then the conditional probability law is defined as above with respect to the subspace spanned by the coordinates concerned. 
For example $\nu_{(X_{[i]},Z_{[i]}) |X_i}:\mathbb R \times \mathcal B_{\mathbb R^{2n-2}} \mapsto [0,1]$ is defined with respect to the marginal probability law $\nu_{(X,Z_{[i]})}: \mathcal B_{\mathbb R^{2n-1}} \mapsto [0,1]$ instead of the joint probability law $\nu$.
Conditional expectations of random variables $\varphi=\varphi(x,z_{[i]})$ are defined as above, but the conditional expectation operator $\varphi \mapsto \mathbb E_{\left.\left( X_{[i]},Z_{[i]} \right) \right|X_i}^\nu \varphi$ acts on random variables depending on the full range of coordinates $(x,z)$ by
\[
\mathbb E^\nu_{(X_{[i]},Z_{[i]})|X_i} \varphi =
\int_{\mathbb R^{2n-2}} \varphi(x,z) \, \tilde Q (x_i,d(x_{[i]},z_{[i]})).
\]
This is a measurable function of $(x_i,z_i) \in \mathbb R^2$, while 
\[
\mathbb E^\nu_{X_{[i]}|X_i} \varphi =\int_{\mathbb R^{n-1}} \varphi (x,z) \hat Q (x_i,dx_{[i]})
\]
is a function of $(x_i,z) \in \mathbb R^{1+n}$.
$\tilde Q: \mathbb R \times \mathcal B_{\mathbb R^{2n-2}} \mapsto [0,1]$, and
$\hat Q: \mathbb R \times \mathcal B_{\mathbb R^{n-1}} \mapsto [0,1]$ are regular conditional probability laws in $\mathbb R^{2n-1}$ and $\mathbb R^n$, respectively.

Recall that two $\sigma$-algebras,
$\mathcal A_1$ and $\mathcal A_2$,
are conditionally independent given a $\sigma$-algebra $\mathcal B$ if and only if
$\Pr \left( \left. A \right| \sigma ( \mathcal A_1 \cup \mathcal B)\right) = \Pr \left( \left. A \right| \mathcal B \right)$
for every
$A \in \mathcal A_2$
(see e.g. \cite{Cinlar11}, Proposition IV.3.2).
Taking Definition \ref{D independent random treaties} into account, 
$\mathcal H_X$
can be characterized as the set of all
$\nu \in \mathcal P$
satisfying the following conditions:
\begin{enumerate}[label=(\roman*)]
\item
$\nu_X= \mu_X$;
\item
$\nu$
is concentrated on the set
\[
\left\{ (x,z) \in \mathbb R^{2n} : 0 \leq z_i \leq x_i \ \ i=1,2,\ldots , n \right\} ;
\]
\item
$\nu_{(X_{[i]},Z_{[i]}) | (X_i,Z_i)} =
\nu_{(X_{[i]},Z_{[i]}) | X_i}, \qquad i=1,2,\ldots, n$,

i.e., for every $A \in \mathcal B_{\mathbb R^{2n-2}}$, the function $(x_i,z_i) \mapsto \nu_{(X_{[i]},Z_{[i]}) | (X_i,Z_i)}(x_i,z_i,A)$ depends only on $x_i$.
\end{enumerate}

The premium calculation principles \eqref{Eq premium moments} can be extended to 
the set of probability measures $\nu \in \mathcal P$
such that $\mathbb E^\nu Z_i^{k_i} < +\infty$ through the obvious expression
\begin{equation}
\label{Eq premium moments relaxed}
P_i(\nu) = \Psi_i \left( \mathbb E^\nu Z_i, \mathbb E^\nu Z_i^2, \ldots , \mathbb E^\nu Z_i^{k_i} \right) .
\end{equation}
Similarly, the net profit random variable is the function $L_\nu: \mathbb R^{2n} \mapsto \mathbb R$ defined as
\begin{equation}
\label{Eq net profit relaxed}
L_\nu (x,z) = c - \sum_{i=1}^n \left( P_i(\nu) + x_i - z_i \right) ,
\end{equation}
and the expected utility functional \eqref{Eq expected utility} can be extended to 
$\nu \in \mathcal P$
by
\begin{equation}
\label{Eq expected utility relaxed}
\rho ( \nu) = \mathbb E^\nu U \left( L_\nu \right).
\end{equation}

The following proposition provides an important reason to consider the space
$\mathcal H_X$
instead of
$\mathcal Z$.

\begin{proposition}
\label{P compact H}
$\mathcal H_X$
is a relatively compact subset of
$\mathcal P$.
\\	
$\mathcal H_X$ is compact if the following assumption holds:
\begin{description}
\item[{\bf (AC)}]
$\mu_X$ is absolutely continuous with respect to the product of marginal distributions $\mu_{X_1} \times \mu_{X_2} \times \ldots \times \mu_{X_n}$.
\end{description}
\end{proposition}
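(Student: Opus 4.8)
The plan is to prove the two assertions separately. For relative compactness I would appeal to Prokhorov's theorem and verify that $\mathcal H_X$ is tight; this uses only the first two requirements in Definition \ref{D independent random treaties} and not the conditional-independence property. Since $\nu_X=\mu_X$ is a single, hence tight, law on $\mathbb R^n$, given $\varepsilon>0$ pick a compact $K_0\subset\mathbb R^n$ with $\mu_X(K_0)>1-\varepsilon$; as $X\ge 0$ we may take $K_0\subset[0,M]^n$ for some $M$. For every $\nu\in\mathcal H_X$ the event $\{x\in K_0\}$ is contained, up to a $\nu$-null set (because $0\le z_i\le x_i$ $\nu$-a.s.), in the compact rectangle $K_0\times[0,M]^n$, so $\nu(K_0\times[0,M]^n)\ge\nu_X(K_0)>1-\varepsilon$. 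Hence $\mathcal H_X$ is tight and therefore relatively compact.

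For compactness under (AC) I would first extract the structural description of $\mathcal H_X$ hidden in the characterization stated in the text. For $\nu\in\mathcal H_X$, a standard chain-rule argument applied to the conditional-independence property $Z_i\perp(X_{[i]},Z_{[i]})\mid X_i$ shows that the conditional law of $(Z_1,\ldots,Z_n)$ given $X$ is the product $\bigotimes_i\kappa_i(X_i,\cdot)$, where $\kappa_i(x_i,\cdot)$ is the conditional law of $Z_i$ given $X_i$; equivalently $\nu(d(x,z))=\mu_X(dx)\prod_i\kappa_i(x_i,dz_i)$. Invoking (AC) to write $\mu_X=p\cdot(\mu_{X_1}\times\cdots\times\mu_{X_n})$, this becomes $\nu=p\cdot(\lambda_1\otimes\cdots\otimes\lambda_n)$, where $\lambda_i:=\nu_{(X_i,Z_i)}=\mu_{X_i}\otimes\kappa_i$ and $p$ is read as a function of $x$ on $\mathbb R^{2n}$. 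Conversely, any measure $p\cdot(\lambda_1\otimes\cdots\otimes\lambda_n)$ with each $\lambda_i$ a probability law on $\mathbb R^2$ whose first marginal is $\mu_{X_i}$ and which is supported on $\{0\le z_i\le x_i\}$ lies in $\mathcal H_X$: disintegrating $\lambda_i$ over $\mu_{X_i}$ recovers the product-kernel form, from which conditions (i)--(iii) follow directly (and $\int p\,d(\lambda_1\otimes\cdots\otimes\lambda_n)=1$ since $p$ is the $\mu_X$-density). Thus, under (AC), $\mathcal H_X$ is exactly the image of $\prod_i\mathcal L_i$ under $(\lambda_1,\ldots,\lambda_n)\mapsto p\cdot(\lambda_1\otimes\cdots\otimes\lambda_n)$, where $\mathcal L_i$ denotes the set of such $\lambda_i$.

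Each $\mathcal L_i$ is weakly closed (the first-marginal constraint survives because marginals depend continuously on the law; the support constraint survives by the portmanteau theorem, $\{0\le z_i\le x_i\}$ being closed) and tight (its first marginal is the fixed law $\mu_{X_i}$ and $0\le z_i\le x_i$), hence compact; since the tensor map $(\lambda_i)\mapsto\bigotimes_i\lambda_i$ is weakly continuous, $\{\bigotimes_i\lambda_i:\lambda_i\in\mathcal L_i\}$ is compact. The remaining point is the continuity of $\sigma\mapsto p\cdot\sigma$ on this set, and this is where (AC) does the real work: $p$ is merely an $L^1$ density, so this reweighting map is not continuous on all of $\mathcal P$, but every $\sigma$ in our set has $x$-marginal equal to the fixed law $\mu_{X_1}\times\cdots\times\mu_{X_n}$. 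Hence, choosing bounded continuous $p_m\to p$ in $L^1(\mu_{X_1}\times\cdots\times\mu_{X_n})$, one has $\big|\int Fp\,d\sigma-\int Fp_m\,d\sigma\big|\le\|F\|_\infty\,\|p-p_m\|_{L^1(\mu_{X_1}\times\cdots\times\mu_{X_n})}$ \emph{uniformly} in $\sigma$, while $\int Fp_m\,d\sigma_j\to\int Fp_m\,d\sigma$ because $Fp_m$ is bounded continuous; a three-$\varepsilon$ estimate then yields $p\cdot\sigma_j\to p\cdot\sigma$. So $\sigma\mapsto p\cdot\sigma$ is continuous on the compact set above, and $\mathcal H_X$, being its continuous image, is compact. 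I expect this last step — the continuity of the reweighting-by-$p$ map, i.e. the fact that conditional independence is preserved under weak limits precisely because the $x$-marginal is pinned and (AC) makes $p$ a genuine $L^1$ function — to be the main obstacle; the rest is routine tightness and portmanteau bookkeeping.
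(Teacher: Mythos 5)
Your proposal is correct, and its second half takes a genuinely different route from the paper's, although both rest on the same structural identity under {\bf (AC)}: an element of $\mathcal H_X$ is determined by its two-dimensional marginals via $\nu(d(x,z))=p(x)\,\nu_{(X_1,Z_1)}(d(x_1,z_1))\cdots\nu_{(X_n,Z_n)}(d(x_n,z_n))$, with $p$ the density of $\mu_X$ with respect to $\mu_{X_1}\times\cdots\times\mu_{X_n}$. The first part (tightness plus Prohorov) is the same as in the paper. For the second part, the paper argues sequentially: it takes $\nu_j\to\nu$ weakly, builds a candidate $\tilde\nu$ from the conditional kernels of $\nu$, approximates the density by a continuous compactly supported function, partitions its support into small rectangles with $\mu_{X_i}$-null boundaries, compares $\nu_j$ and $\tilde\nu$ on product sets using weak convergence of the marginals $(\nu_j)_{(X_i,Z_i)}$, and concludes $\nu=\tilde\nu\in\mathcal H_X$ via the product-set Portmanteau addendum (Lemma \ref{L portmanteau}); i.e.\ it proves closedness and combines it with relative compactness. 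You instead exhibit $\mathcal H_X$ as the continuous image of the compact set $\prod_i\mathcal L_i$ (each $\mathcal L_i$ compact because its first marginal is pinned at $\mu_{X_i}$ and the support constraint $\{0\le z_i\le x_i\}$ is closed) under the weakly continuous tensor map followed by reweighting by $p$; the continuity of the reweighting map on the pinned-$x$-marginal set, obtained by a uniform-in-$\sigma$ $L^1$ approximation of $p$ by bounded continuous functions, is the exact counterpart of the paper's $\alpha_\varepsilon$ step, but it lets you finish with ``a continuous image of a compact set is compact,'' dispensing with Lemma \ref{L portmanteau} and the rectangle bookkeeping. Your version is more conceptual and makes the role of {\bf (AC)} (turning $\mu_X$ into an $L^1$ weight over the product of marginals) transparent; the paper's version stays closer to first principles and only needs the classical Portmanteau theorem plus its own lemma. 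The one place where you are terse is the equivalence between the conditional-independence conditions defining $\mathcal H_X$ and the product-kernel disintegration (your ``standard chain-rule argument,'' and its converse for the reverse inclusion); this is indeed standard, and the paper asserts the same equivalence without proof, so it is shared rather than missing detail.
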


To prove the second part of Proposition \ref{P compact H}, we will use the following addenda to the Portmanteau theorem on weak convergence:

\begin{lemma}
\label{L portmanteau}
Let $S_i$, $i=1,2, \ldots, n$, be separable metric spaces, provided with the respective Borel $\sigma$-algebras, $\mathcal B_{S_i}$.
\\
A sequence of probability measures $\left\{ \nu_j : \bigotimes\limits_{i=1}^n \mathcal B_{S_i} \mapsto [0,1] \right\}_{j \in \mathbb N}$ converges weakly to $\nu : \bigotimes\limits_{i=1}^n \mathcal B_{S_i} \mapsto [0,1]$ if and only if
\[
\lim_{j \to \infty} \nu_j\left(\prod\limits_{i=1}^n A_i \right) = 
\nu \left(\prod\limits_{i=1}^n A_i \right)
\]
for every $A_i \in \mathcal B_{S_i}$ 
such that 
$\nu \left( \left( \prod\limits_{\ell < i}S_\ell \right) \times \left(\partial A_i \right) \times \left( \prod\limits_{\ell > i}S_\ell \right)  \right) =  0$, $i=1,2, \ldots, n$ ( $\bigotimes$ denoting the product of $\sigma$-algebras).
\end{lemma}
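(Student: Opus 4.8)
The plan is to derive both directions from the classical Portmanteau theorem, which applies here because a finite product of separable metric spaces is again separable metric. Throughout, write $\nu^{(i)}$ for the $i$-th marginal of $\nu$ on $S_i$, so that the hypothesis $\nu\bigl( \prod_{\ell<i}S_\ell \times \partial A_i \times \prod_{\ell>i}S_\ell \bigr)=0$ is exactly the statement that $A_i$ is a $\nu^{(i)}$-continuity set, and call a rectangle $\prod_{i=1}^n U_i$ \emph{admissible} when each $U_i$ is an open $\nu^{(i)}$-continuity subset of $S_i$.

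For the ``only if'' implication I would use the identity
$\partial\bigl( \prod_{i=1}^n A_i \bigr) = \bigcup_{i=1}^n \bigl( \prod_{\ell<i}\overline{A_\ell}\times\partial A_i\times\prod_{\ell>i}\overline{A_\ell} \bigr) \subseteq \bigcup_{i=1}^n \bigl( \prod_{\ell<i}S_\ell\times\partial A_i\times\prod_{\ell>i}S_\ell \bigr)$,
so that the stated boundary conditions force $\nu\bigl(\partial\prod_i A_i\bigr)=0$; since $\nu_j\to\nu$ weakly, the usual Portmanteau theorem yields $\nu_j(\prod_iA_i)\to\nu(\prod_iA_i)$. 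This direction is immediate.

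The content is the converse. I would first record two elementary facts. (a) The family $\mathcal U_i$ of open $\nu^{(i)}$-continuity subsets of $S_i$ is closed under finite unions and finite intersections, because $\partial(U\cap V)\cup\partial(U\cup V)\subseteq\partial U\cup\partial V$. (b) For every $x\in S_i$ and $\delta>0$ there is an open ball $B(x,r)$ with $0<r<\delta$ lying in $\mathcal U_i$, since the spheres $\{y:d(y,x)=r\}$, $r>0$, are pairwise disjoint and hence carry positive $\nu^{(i)}$-mass for at most countably many $r$. Combining these: given an open $G\subseteq\prod_iS_i$ and a point $x\in G$, choose a basic box $\prod_iV_i$ with $x\in\prod_iV_i\subseteq G$ and shrink each $V_i$ to a ball $U_i\in\mathcal U_i$ with $x_i\in U_i\subseteq V_i$, obtaining an admissible rectangle with $x\in\prod_iU_i\subseteq G$. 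Thus every open $G$ is a union of admissible rectangles, and since $\prod_iS_i$ is Lindel\"of it is a countable such union; hence for each $\varepsilon>0$ there is a finite subfamily whose union $R$ satisfies $R\subseteq G$ and $\nu(R)>\nu(G)-\varepsilon$.

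To conclude, I would note that a finite intersection of admissible rectangles is admissible (intersect coordinate-wise and invoke (a)), so inclusion--exclusion writes both $\nu_j(R)$ and $\nu(R)$ as the same finite signed combination of $\nu_j$- and $\nu$-values on admissible rectangles; the hypothesis then gives $\nu_j(R)\to\nu(R)$. Therefore $\liminf_j\nu_j(G)\ge\liminf_j\nu_j(R)=\nu(R)>\nu(G)-\varepsilon$, and letting $\varepsilon\downarrow0$ yields $\liminf_j\nu_j(G)\ge\nu(G)$ for every open $G$, which by Portmanteau is equivalent to $\nu_j\to\nu$ weakly. The main point requiring care is the passage from an arbitrary open set to a finite union of admissible rectangles approximating it from below; the rest is bookkeeping with boundaries and inclusion--exclusion.
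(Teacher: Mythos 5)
Your proof is correct and follows essentially the same route as the paper's: necessity via the boundary inclusion $\partial\left(\prod_i A_i\right) \subseteq \bigcup_i \left( \prod_{\ell<i}S_\ell \times \partial A_i \times \prod_{\ell>i}S_\ell \right)$ and the classical Portmanteau theorem, and sufficiency by covering an open set with countably many rectangles whose sides are marginal continuity sets, approximating from below by finitely many of them, and invoking the $\liminf$-on-open-sets criterion. The only differences are cosmetic bookkeeping --- you handle the finite union by inclusion--exclusion where the paper disjointifies the rectangles and uses additivity --- and you spell out (via the continuity-radius balls and the Lindel\"of property) the existence of the countable null-boundary cover, which the paper asserts without detail.
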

\begin{proof}
The fact that the condition is necessary follows trivially from the Portmanteau theorem (see e.g. \cite[Chapter 1]{Billingsley99}), since 
$\nu \left( \left( \prod\limits_{\ell < i}S_\ell \right) \times \left(\partial A_i \right) \times \left( \prod\limits_{\ell > i}S_\ell \right)  \right) =  0$, $i=1,2, \ldots, n$,
obviously implies $\nu \left( \partial \left( \prod\limits_{i=1}^n A_i \right) \right) =  0$ 
\\
To prove that the condition is sufficient, assume that it holds and fix an open set $U \subset \prod\limits_{i=1}^n S_i$.
Then, there is a countable family $\left\{ \prod\limits_{i=1}^n A_{i,k} \right\}_{k \in \mathbb N}$, such that every $A_{i,k}\subset S_i$ is open, 
$\nu \left( \left( \prod\limits_{\ell < i}S_\ell \right) \times \left(\partial A_{i,k} \right) \times \left( \prod\limits_{\ell > i}S_\ell \right)  \right) =  0$, and $U = \bigcup\limits_{k=1}^\infty \prod\limits_{i=1}^n A_{i,k}$.
Thus, there is a sequence $\left\{ \prod\limits_{i=1}^n B_{i,k} \in
\bigotimes\limits_{i=1}^n \mathcal B_{S_i} \right\}_{k \in \mathbb N}$ such that 
$\nu \left( \left( \prod\limits_{\ell < i}S_\ell \right) \times \left(\partial B_{i,k} \right) \times \left( \prod\limits_{\ell > i}S_\ell \right)  \right) =  0$,
$U = \bigcup\limits_{k=1}^\infty \prod\limits_{i=1}^n B_{i,k}$, and $\left(\prod\limits_{i=1}^n B_{i,k} \right) \cap \left(\prod\limits_{i=1}^n B_{i,\ell} \right) = \emptyset$ whenever $k \neq \ell$.
Therefore,
\begin{align*}
\liminf_{j \to \infty} \nu_j(U) \geq &
\liminf_{j \to \infty} \nu_j \left( \bigcup_{\ell=1}^k \prod\limits_{i=1}^n B_{i,\ell}  \right) =
\sum_{\ell=1}^k \lim_{j \to \infty} \nu_j \left( \prod\limits_{i=1}^n B_{i,\ell}  \right) =
\sum_{\ell=1}^k \nu \left( \prod\limits_{i=1}^n B_{i,\ell}  \right) 
\\ = &
\nu \left( \bigcup_{\ell=1}^k \prod\limits_{i=1}^n B_{i,\ell}  \right) .
\end{align*}
Since $\nu (U)= \lim\limits_{k \to \infty} \nu \left( \bigcup\limits_{\ell=1}^k \prod\limits_{i=1}^n B_{i,\ell}  \right) $, this shows that $\liminf\limits_{j \to \infty} \nu_j(U) \geq \nu(U)$ holds for every open $U \subset \prod \limits_{i=1}^n S_i$. Hence, the Portmanteau theorem guarantees that $\{ \nu_j\}$ converges weakly to $\nu$.
\end{proof}

\begin{proof}[Proof of Proposition \ref{P compact H}]
It follows immediately from the definition of
$\mathcal H_X$
that 
\[
\nu \left([0,M]^{2n} \right) = \mu_X \left( [0,M]^n \right) ,
\]
for any
$\nu \in \mathcal H_X$
and
$M\in [0,+\infty[$.
Since
$\lim\limits_{M \rightarrow +\infty} \mu_X \left([0,M]^n \right) = 1$,
this shows that
$\mathcal H_X $
is uniformly tight.
Hence, Prohorov's theorem (see e.g. \cite{Billingsley99}) states that
$ \mathcal H_X$
is a relatively compact subset of $\mathcal P$.

Now, suppose that $\mu_X$ is absolutely continuous with respect to $\mu_{X_1} \times \mu_{X_2} \times \ldots \times \mu_{X_n}$, i.e., there is a measurable function $\alpha: \mathbb R^n \mapsto [0,+\infty[$ such that
\[
\mu_X (dx) = 
\alpha (x)
\mu_{X_1}(dx_1) \mu_{X_2}(dx_2) \ldots \mu_{X_n}(dx_n).
\]
Fix $\nu \in \partial \mathcal H_X$, and pick a sequence $\{ \nu_j \in \mathcal H_X \}_{j \in \mathbb N}$, converging weakly to $\nu$.
Notice that $\nu \in \mathcal H_X$ if and only if the following conditions hold:
\begin{align}
& \label{Eq compact H 1}
\nu \{ (x,z): 0 \leq z_i \leq x_i, \ i=1,2,\ldots , n \}=1,
\\ & \label{Eq compact H 2}
\nu (d(x,z))=
\nu_{Z_1|X_1}(x_1,dz_1)
\nu_{Z_2|X_2}(x_2,dz_2)
\ldots
\nu_{Z_n|X_n}(x_n,dz_n) \mu_X(dx),
\end{align}
where $\nu_{Z_i|X_i}: \mathbb R \times \mathcal B_{\mathbb R} \mapsto [0,1]$, $i=1,2,\ldots, n$, are regular conditional probabilities.

Since for every $j \in \mathbb N$, $\nu_j$ satisfies condition \eqref{Eq compact H 1} and the set $\{(x,z): 0 \leq z_i \leq x_i, \ i\leq n\}$ is closed, we only need to prove that 
condition \eqref{Eq compact H 2} holds.
Notice that given $\nu$, the conditional probability measures $\nu_{Z_i|X_i}$ are well defined and the right-hand side of \eqref{Eq compact H 2} is a probability measure, which we denote by $\tilde \nu$.
We are going to prove that $\{\nu_j \}$ converges weakly to $\tilde \nu$.

For every $\varepsilon>0$ there is a continuous function with compact support $\alpha_\varepsilon: \mathbb R^n \mapsto [0,+\infty[$ such that
\[
\int_{\mathbb R^n} \left|\alpha (x)- \alpha_\varepsilon (x) \right| \, \mu_{X_1}(dx_1) \mu_{X_2}(dx_2) \ldots \mu_{X_n}(dx_n) < \varepsilon .
\]
For any $\eta >0$, the support of $\alpha_\varepsilon$ admits a finite cover by rectangles $\prod\limits_{i=1}^n ]a_{i,k}^\eta, b_{i,k}^\eta [$, $k=1,2,\ldots ,K$ such that
\[
0 < b_{i,k}^\eta - a_{i,k}^\eta < \eta, \quad
\mu_{X_i}\{a_{i,k}^\eta,b_{i,k}^\eta\} =0,
\qquad 
i = 1,2, \ldots , n, \ k=1,2, \ldots, K.
\]
Thus, the support of $\alpha_\varepsilon$ admits a finite cover by measurable rectangles $\prod\limits_{i=1}^n C_{i,m}^\eta$, $m=1,2,\ldots , M$, such that
\begin{align*}
&
\mathrm{diam}(C_{i,m}^\eta) < \eta,
\quad 
\mu_{X_i}\left(\partial C_{i,m}^\eta \right) =0
\qquad
\text{for } i \leq n, \ m\leq M,
\\ &
\left( \prod_{i=1}^n C_{i,m}^\eta\right)
\cap
\left( \prod_{i=1}^n C_{i,\ell}^\eta\right) = \emptyset \qquad \text{when } m \neq \ell.
\end{align*}

Pick sets $A_1,A_2, \ldots, A_n,B_1,B_2, \ldots, B_n \in \mathcal B_{\mathbb R}$ such that 
\begin{align*}
&
\nu (\partial (\mathbb R^{i-1} \times A_i \times \mathbb R^{2n-i})) = 
\nu (\partial (\mathbb R^{n+i-1} \times B_i \times \mathbb R^{n-i})) = 0,
\qquad
i=1,2, \ldots, n.
\end{align*}
Then, choose 
$\prod\limits_{i=1}^n C_{i,m}^\eta$, $m=1,2,\ldots , M$, as above, with $\eta$ sufficiently small such that
$| \alpha_\varepsilon (x) - \alpha_\varepsilon (y) | < \varepsilon$ whenever $|x-y|< n \eta$, and picking points $x^{(m)} \in \prod\limits_{i=1}^n C_{i,m}^\eta$, $m=1,2,\ldots , M$.
Then:
\begin{align*}
&
\left| \tilde \nu \left(\left( \prod_{i=1}^n A_i \right) \times \left( \prod_{i=1}^n B_i \right) \right) - \nu_j \left(\left( \prod_{i=1}^n A_i \right) \times \left( \prod_{i=1}^n B_i \right) \right) \right| =
\\ = &
\Bigg|
\int_{\prod\limits_{i=1}^n A_i}
\alpha (x) \nu_{Z_1|X_1} (x_1,B_1)
\ldots
\nu_{Z_n|X_n} (x_n,B_n) \,
\mu_{X_1}(dx_1) \ldots \mu_{X_n}(dx_n) -
\\ & -
\int_{\prod\limits_{i=1}^n A_i}
\alpha (x)
\left( \nu_j \right)_{Z_1|X_1} (x_1,B_1)
\ldots
\left( \nu_j \right)_{Z_n|X_n} (x_n,B_n) \,
\mu_{X_1}(dx_1) \ldots \mu_{X_n}(dx_n) 
\Bigg| 
\\ \leq &
\Bigg|
\int_{\prod\limits_{i=1}^n A_i}
\alpha_\varepsilon (x) \nu_{Z_1|X_1} (x_1,B_1)
\ldots
\nu_{Z_n|X_n} (x_n,B_n) \,
\mu_{X_1}(dx_1) \ldots \mu_{X_n}(dx_n) -
\\ & -
\int_{\prod\limits_{i=1}^n A_i}
\alpha_\varepsilon (x)
\left( \nu_j \right)_{Z_1|X_1} (x_1,B_1)
\ldots
\left( \nu_j \right)_{Z_n|X_n} (x_n,B_n) \,
\mu_{X_1}(dx_1) \ldots \mu_{X_n}(dx_n) 
\Bigg| 
+ 2 \varepsilon 
\\ = &
\Bigg|
\sum_{m=1}^M \int_{\prod\limits_{i=1}^n A_i \cap C_{i,m}^\eta}
\alpha_\varepsilon (x) \nu_{Z_1|X_1} (x_1,B_1)
\ldots
\nu_{Z_n|X_n} (x_n,B_n) \,
\mu_{X_1}(dx_1) \ldots \mu_{X_n}(dx_n) -
\\ & -
\sum_{m=1}^M \int_{\prod\limits_{i=1}^n A_i \cap C_{i,m}^\eta}
\alpha_\varepsilon (x)
\left( \nu_j \right)_{Z_1|X_1} (x_1,B_1)
\ldots
\left( \nu_j \right)_{Z_n|X_n} (x_n,B_n) \,
\mu_{X_1}(dx_1) \ldots \mu_{X_n}(dx_n) 
\Bigg| 
+ 2 \varepsilon 
\\ \leq &
\Bigg|
\sum_{m=1}^M \alpha_\varepsilon (x^{(m)}) \int_{\prod\limits_{i=1}^n A_i \cap C_{i,m}^\eta}
\nu_{Z_1|X_1} (x_1,B_1)
\ldots
\nu_{Z_n|X_n} (x_n,B_n) \,
\mu_{X_1}(dx_1) \ldots \mu_{X_n}(dx_n) -
\\ & -
\sum_{m=1}^M \alpha_\varepsilon (x^{(m)}) \int_{\prod\limits_{i=1}^n A_i \cap C_{i,m}^\eta}
\left( \nu_j \right)_{Z_1|X_1} (x_1,B_1)
\ldots
\left( \nu_j \right)_{Z_n|X_n} (x_n,B_n) \,
\mu_{X_1}(dx_1) \ldots \mu_{X_n}(dx_n) 
\Bigg| 
+ 4 \varepsilon
\\ \leq &
\sum_{m=1}^M \alpha_\varepsilon (x^{(m)}) \Bigg|
\prod_{i=1}^n
\int_{A_i \cap C_{i,m}^\eta}
\nu_{Z_i|X_i} (x_i,B_i) \,
\mu_{X_i}(dx_i) -
\\ & -
\prod_{i=1}^n \int_{A_i \cap C_{i,m}^\eta}
\left( \nu_j \right)_{Z_i|X_i} (x_i,B_i) \,
\mu_{X_i}(dx_i) 
\Bigg| 
+ 4 \varepsilon 
\\ = &
\sum_{m=1}^M \alpha_\varepsilon (x^{(m)}) \Bigg|
\prod_{i=1}^n \nu_{(X_i,Z_i)} \left(A_i \cap C_{i,m}^\eta\right) -
\prod_{i=1}^n \left( \nu_j \right)_{(X_i, Z_i)} \left(A_i \cap C_{i,m}^\eta\right)
\Bigg| 
+ 4 \varepsilon 
.
\end{align*}
Since $\{\left( \nu_j \right)_{(X_i, Z_i)}\}_{j \in \mathbb N}$ converges weakly to $\nu_{(X_i, Z_i)}$ (for $i=1,2, \ldots, n$), the Portmanteau theorem states that 
$\lim\limits_{j \to \infty} \left( \nu_j \right)_{(X_i, Z_i)} \left(A_i \cap C_{i,m}^\eta\right) = \nu_{(X_i, Z_i)} \left(A_i \cap C_{i,m}^\eta\right)$, and therefore
\[
\limsup_{j \to \infty} \left| \tilde \nu \left(\left( \prod_{i=1}^n A_i \right) \times \left( \prod_{i=1}^n B_i \right) \right) - \nu_j \left(\left( \prod_{i=1}^n A_i \right) \times \left( \prod_{i=1}^n B_i \right) \right) \right| \leq 4 \varepsilon.
\]
Since $\varepsilon$ is arbitrary, this shows that
\[
\lim_{j \to \infty} \nu_j \left(\left( \prod_{i=1}^n A_i \right) \times \left( \prod_{i=1}^n B_i \right) \right) =
\tilde \nu \left(\left( \prod_{i=1}^n A_i \right) \times \left( \prod_{i=1}^n B_i \right) \right).
\]
Therefore, Lemma \ref{L portmanteau} guarantees that $\{ \nu_j\}$ converges weakly to $\tilde \nu$.
\end{proof}

The following example shows that assumption {\bf (AC)} cannot be dropped from Proposition \ref{P compact H}, even in cases where the marginal distributions are absolutely continuous.

\begin{example}
\label{Ex noncompact H}
Consider the case $n=2$, with $\mu_X(d(x_1,x_2)) = \delta_{x_1}(dx_2) f(x_1)\, dx_1$, where $\delta_a$ denotes the Dirac measure concentrated at the point $a$, and $f:\mathbb R \mapsto [0,+\infty[$ is a continuous function such that $\int_0^{+\infty} f(x_1)\, dx_1=1$.
Thus, the marginal distributions are absolutely continuous ($\mu_{X_i}(dx_i) = f(x_i)\, dx_i$ for $i=1,2$) but $\mu_X$ is not absolutely continuous with respect to $\mu_{X_1} \times \mu_{X_2}$.
\\
Pick continuous functions $Z_{1,1}, Z_{1,2}, Z_{2,1}, Z_{2,2} : [0,+\infty[ \mapsto [0,+\infty[$ such that
\begin{align*}
&
0 \leq Z_{i,k}(x_i) \leq x_i \qquad \forall x_i \geq 0, \ \ i,k \in \{1,2 \},
\\ &
Z_{1,1}(x_1) \neq Z_{1,2}(x_1), \quad \text{and} \quad
Z_{2,1}(x_2) \neq Z_{2,2}(x_2) \qquad \forall x_1, x_2 >0 ,
\end{align*}
and consider the deterministic treaties
\begin{align*}
&
Z_i^{(j)}(x_i) = \left\{
\begin{array}{ll}
Z_{i,1}(x_i) & \text{for } x_i \in \left[ \frac{k}{2^j}, \frac{k+1}{2^j} \right[ \ \text{with $k$ even},
\\ 
Z_{i,2}(x_i) & \text{for } x_i \in \left[ \frac{k}{2^j}, \frac{k+1}{2^j} \right[ \ \text{with $k$ odd},
\qquad
i =1,2, \ \ j \in \mathbb N.
\end{array}
\right.
\end{align*}
The strategies $\left(Z_1^{(j)}, Z_2^{(j)}\right)$ are represented by the measures $\nu_j \in \mathcal H_X$, defined as
\begin{align*}
&
\nu_j(d(x,z)) = 
\\ = &
\left(
\sum\limits_{ k =0}^{\infty} \left(
\chi_{ \left[ \frac{2k}{2^j},\frac{2k+1}{2^j} \right[ } (x_1) \delta_{Z_{1,1}(x_1)} +
\chi_{ \left[ \frac{2k+1}{2^j},\frac{2k+2}{2^j} \right[ } (x_1) \delta_{Z_{1,2}(x_1)}
\right)
\right) (dz_1)
\\&
\left(
\sum\limits_{ \ell =0}^{\infty} \left(
\chi_{ \left[ \frac{2\ell}{2^j},\frac{2\ell+1}{2^j} \right[ } (x_1) \delta_{Z_{2,1}(x_2)} +
\chi_{ \left[ \frac{2\ell+1}{2^j},\frac{2\ell+2}{2^j} \right[ } (x_1) \delta_{Z_{2,2}(x_2)}
\right) 
\right) (dz_2) \, \delta_{x_1}(dx_2) \, f(x_1) \, dx_1,
\end{align*}
where $\chi_A$ denotes the indicator function of set $A$.
For any continuous bounded function $\varphi: \mathbb R^4 \mapsto \mathbb R$, we have
\begin{align*}
\mathbb E^{\nu_j} \varphi =&
\sum_{\substack{k \ \text{even} \\ \ell  \ \text{even}}}
\int_{ \frac{k}{2^j}}^{ \frac{k+1}{2^j} } \int_{ \frac{\ell}{2^j}}^{ \frac{\ell+1}{2^j} }
\varphi \left(x_1,x_2, Z_{1,1}(x_1),Z_{2,1}(x_2)\right)
\delta_{x_1}(dx_2) \, f(x_1) \, dx_1 +
\\ & +
\sum_{\substack{k \ \text{even} \\ \ell  \ \text{odd}}}
\int_{ \frac{k}{2^j}}^{ \frac{k+1}{2^j} } \int_{ \frac{\ell}{2^j}}^{ \frac{\ell+1}{2^j} }
\varphi \left(x_1,x_2, Z_{1,1}(x_1),Z_{2,2}(x_2)\right)
\delta_{x_1}(dx_2) \, f(x_1) \, dx_1 +
\\ & +
\sum_{\substack{k \ \text{odd} \\ \ell  \ \text{even}}}
\int_{ \frac{k}{2^j}}^{ \frac{k+1}{2^j} } \int_{ \frac{\ell}{2^j}}^{ \frac{\ell+1}{2^j} }
\varphi \left(x_1,x_2, Z_{1,2}(x_1),Z_{2,1}(x_2)\right)
\delta_{x_1}(dx_2) \, f(x_1) \, dx_1 +
\\ & +
\sum_{\substack{k \ \text{odd} \\ \ell  \ \text{odd}}}
\int_{ \frac{k}{2^j}}^{ \frac{k+1}{2^j} } \int_{ \frac{\ell}{2^j}}^{ \frac{\ell+1}{2^j} }
\varphi \left(x_1,x_2, Z_{1,2}(x_1),Z_{2,2}(x_2)\right)
\delta_{x_1}(dx_2) \, f(x_1) \, dx_1 
\\ = &
\sum_{ k \ \text{even}}
\int_{ \frac{k}{2^j}}^{ \frac{k+1}{2^j} } \varphi \left(x_1,x_1, Z_{1,1}(x_1),Z_{2,1}(x_1)\right)
\, f(x_1) \, dx_1 +
\\ & +
\sum_{ k \ \text{odd}}
\int_{ \frac{k}{2^j}}^{ \frac{k+1}{2^j} } \varphi \left(x_1,x_1, Z_{1,2}(x_1),Z_{2,2}(x_1)\right)
\, f(x_1) \, dx_1 .
\end{align*}
Since the integrands are continuous, this shows that
\begin{align*}
\lim_{j \to \infty} \mathbb E^{\nu_j } \varphi = &
\frac 1 2
\int_0^{ +\infty } \varphi \left(x_1,x_1, Z_{1,1}(x_1),Z_{2,1}(x_1)\right)
\, f(x_1) \, dx_1 +
\\ & +
\frac 1 2
\int_0^{ +\infty } \varphi \left(x_1,x_1, Z_{1,2}(x_1),Z_{2,2}(x_1)\right)
\, f(x_1) \, dx_1 
.
\end{align*}
That is, $\{\nu_j \}$ converges weakly to
\begin{align*}
&
\nu(d(x,z)) = 
\frac 1 2
\left(
\delta_{Z_{1,1}(x_1)}(dz_1) \delta_{Z_{2,1}(x_2)}(dz_2) +
\delta_{Z_{1,2}(x_1)}(dz_1) \delta_{Z_{2,2}(x_2)}(dz_2)
\right) 
\, \delta_{x_1}(dx_2) \, f(x_1) \, dx_1.
\end{align*}
Since
\begin{align*}
&
\nu_{Z_1|X_1}(x_1,dz_1) = 
\frac 1 2
\left( \delta_{Z_{1,1}(x_1)} +
\delta_{Z_{1,2}(x_1)}
\right) (dz_1) ,
\\ &
\nu_{Z_1|(X_1,X_2,Z_2)}(x_1,x_2,z_2,dz_1) = 
\left(
\chi_{ \{Z_{2,1}(x_1)\} }(z_2) \delta_{Z_{1,1}(x_1)} +
\chi_{ \{Z_{2,2}(x_1)\} }(z_2)
\delta_{Z_{1,2}(x_1)}
\right) (dz_1),
\end{align*}
we see that $\nu \notin \mathcal H_X$.

A similar computation shows that in the case $\mu_X(dx) = f(x_1, x_2) \, dx_1 \, dx_2$, with $f$ continuous and everything else identical to the above, $\{ \nu_j \}$ converges weakly to
\[
\nu(d(x,z)) = 
\frac 1 4 
\left( \delta_{Z_{1,1}(x_1)} + \delta_{Z_{1,2}(x_1)} \right) (dz_1) \,
\left( \delta_{Z_{2,1}(x_2)} + \delta_{Z_{2,2}(x_2)} \right) (dz_2) \,
f(x_1,x_2) \, dx_1 \, dx_2 .
\]
In this case,
\begin{align*}
&
\nu_{Z_1|(X_1,X_2,Z_2)}(x_1,x_2,z_2,dz_1) =
\nu_{Z_1|X_1}(x_1,dz_1) =
\frac 1 2 
\left( \delta_{Z_{1,1}(x_1)} + \delta_{Z_{1,2}(x_1)} \right) (dz_1),
\\ &
\nu_{Z_2|(X_1,X_2,Z_1)}(x_1,x_2,z_1,dz_2) =
\nu_{Z_2|X_2}(x_2,dz_2) =
\frac 1 2 
\left( \delta_{Z_{2,1}(x_2)} + \delta_{Z_{2,2}(x_2)} \right) (dz_2),
\end{align*}
and therefore $\nu \in \mathcal H_X$.
\end{example}

\subsection{Existence of random maximizers}

Now, we prove that the functional \eqref{Eq expected utility relaxed} admits a maximizer in the space 
$\overline{\mathcal H}_X$ (the closure of $\mathcal H_X$).
Notice that under assumption {\bf (AC)}, $\mathcal H_X = \overline{\mathcal H}_X$, and therefore the optimal reinsurance problem admits a solution in the class of (conditionally independent) randomized strategies.
If assumption {\bf (AC)} fails, then  optimal random treaties still exist, but they may fail to be conditionally independent, though they can be approximated by sequences of conditionally independent randomized strategies.

We will need
the following result concerning moments of ceded risks.

\begin{proposition}
\label{P continuity of momenta}
If $\mathbb E X_i^k < \infty$, then the functional $\nu \mapsto \mathbb E^\nu Z_i^k $ is continuous in 
$\overline{\mathcal H}_X$.
\end{proposition}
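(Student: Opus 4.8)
The plan is to exploit the fact, established in Proposition \ref{P compact H}, that $\overline{\mathcal H}_X$ is compact, together with the observation that every $\nu \in \overline{\mathcal H}_X$ has the same $X$-marginal $\mu_X$, so the random variables $Z_i$ are uniformly bounded in terms of $X_i$: $0 \le Z_i \le X_i$ holds $\nu$-a.s. for every $\nu \in \mathcal H_X$, and by the Portmanteau theorem this closed-set constraint persists in the closure $\overline{\mathcal H}_X$. The difficulty is purely one of \emph{unbounded} integrands: $z_i^k$ is continuous but not bounded, so weak convergence $\nu_j \to \nu$ does not immediately give $\mathbb E^{\nu_j} Z_i^k \to \mathbb E^\nu Z_i^k$. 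The remedy is a uniform-integrability / truncation argument, using the domination $z_i^k \le x_i^k$ on the relevant set and the finiteness of $\mathbb E^{\mu_X} X_i^k = \mathbb E X_i^k$.

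Concretely, first I would fix $i$ and write, for $M>0$, the truncation $\varphi_M(x,z) = (z_i \wedge M)^k$, which is continuous and bounded, hence $\mathbb E^{\nu_j} \varphi_M \to \mathbb E^\nu \varphi_M$ as $j \to \infty$ for any $\nu_j \to \nu$ in $\overline{\mathcal H}_X$. Next I would control the tail: for every $\nu \in \overline{\mathcal H}_X$,
\[
0 \le \mathbb E^\nu Z_i^k - \mathbb E^\nu (Z_i \wedge M)^k \le \mathbb E^\nu \left( Z_i^k \, \chi_{\{Z_i > M\}} \right) \le \mathbb E^\nu \left( X_i^k \, \chi_{\{X_i > M\}} \right) = \mathbb E \left( X_i^k \, \chi_{\{X_i > M\}} \right),
\]
where the last equality holds because the $X$-marginal of $\nu$ is $\mu_X$; here I use $Z_i > M \Rightarrow X_i \ge Z_i > M$ on the set where $0 \le Z_i \le X_i$, which carries full $\nu$-measure throughout $\overline{\mathcal H}_X$. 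Since $\mathbb E X_i^k < \infty$, the quantity $\delta(M) := \mathbb E(X_i^k \chi_{\{X_i > M\}})$ tends to $0$ as $M \to \infty$, and crucially this bound is \emph{uniform in} $\nu \in \overline{\mathcal H}_X$.

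Finally I would combine the two estimates by a standard $\varepsilon/3$ argument. Given $\varepsilon > 0$, choose $M$ with $\delta(M) < \varepsilon/3$; then for $j$ large enough, $|\mathbb E^{\nu_j}(Z_i\wedge M)^k - \mathbb E^\nu (Z_i \wedge M)^k| < \varepsilon/3$, and using the tail bound for both $\nu_j$ and $\nu$ we get
\[
\left| \mathbb E^{\nu_j} Z_i^k - \mathbb E^\nu Z_i^k \right| \le \delta(M) + \left| \mathbb E^{\nu_j}(Z_i\wedge M)^k - \mathbb E^\nu (Z_i \wedge M)^k \right| + \delta(M) < \varepsilon
\]
for all sufficiently large $j$. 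This proves sequential continuity of $\nu \mapsto \mathbb E^\nu Z_i^k$ on $\overline{\mathcal H}_X$; since $\overline{\mathcal H}_X$ is a subset of the metrizable space $\mathcal P$ (with the weak topology on measures on $\mathbb R^{2n}$, which is metrizable), sequential continuity is equivalent to continuity, completing the proof. The only genuine subtlety is making sure the a.s. constraint $0 \le Z_i \le X_i$ and the marginal identity $\nu_X = \mu_X$ are available for every element of the \emph{closure} and not just of $\mathcal H_X$ — but both are closed conditions (the first because $\{0 \le z_i \le x_i\}$ is closed and one applies the Portmanteau theorem, the second because $\nu \mapsto \nu_X$ is weakly continuous), so they pass to $\overline{\mathcal H}_X$, and no appeal to assumption \textbf{(AC)} is needed.
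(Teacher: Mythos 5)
Your proof is correct and follows essentially the same route as the paper: truncate at level $M$, use weak convergence for the bounded continuous function $(z_i\wedge M)^k$, and control the tail by the domination $Z_i^k\chi_{\{Z_i>M\}}\leq X_i^k\chi_{\{X_i>M\}}$, which is uniform over $\overline{\mathcal H}_X$ because the $X$-marginal is always $\mu_X$ and $\mathbb E X_i^k<\infty$. The paper phrases this as separate $\liminf$/$\limsup$ estimates rather than your $\varepsilon/3$ argument, but the substance is identical, and your explicit checks that the constraint $0\leq z_i\leq x_i$ and the marginal condition persist in the closure are exactly the (implicit) facts the paper relies on.
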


\begin{proof}
Consider a sequence $\left\{ \nu_j \in 
\overline{\mathcal H}_X
\right\}_{j \in \mathbb N}$, converging weakly to some $\nu \in
\overline{\mathcal H}_X
$.

By weak convergence, for every $M<+\infty $ we have
\begin{align*}
\mathbb E^\nu \left( Z_i \wedge M \right)^k =
\lim_{j \rightarrow \infty} \mathbb E^{\nu_j} \left( Z_i \wedge M \right)^k \leq
\liminf_{j \rightarrow \infty} \mathbb E^{\nu_j} Z_i^k 
.
\end{align*}
Since $\mathbb E^\nu Z_i^k = \lim\limits_{M \rightarrow \infty} \mathbb E^\nu \left( Z_i \wedge M \right)^k$, this proves that
\[
\mathbb E^\nu Z_i^k \leq \liminf_{ j \rightarrow \infty} \mathbb E^{\nu_j}Z_i^k .
\]

Since every $\nu \in \overline{\mathcal H}_X$ satisfies \eqref{Eq compact H 1},  
$\mathbb E \left[X_i^k \right] < + \infty$ implies $\lim\limits_{M \rightarrow +\infty} \mathbb E^\nu \left[ X_i ^k \chi_{\{X_i>M\}} \right] = 0$.
Therefore, 
\begin{align*}
&
\limsup_{j \rightarrow \infty} \mathbb E^{\nu_j} Z_i^k \leq 
\limsup_{j \rightarrow \infty} \mathbb E^{\nu_j} \left[ \left( Z_i\wedge M\right)^k + X_i^k \chi_{\{X_i>M\} } \right] 
\\ = &
\lim_{j \rightarrow \infty} \mathbb E^{\nu_j} \left( Z_i \wedge M\right)^k +
\mathbb E^\nu \left[ X_i^k \chi_{\{X_i>M\} } \right] =
\mathbb E^\nu \left( Z_i \wedge M\right)^k +
\mathbb E^\nu \left[ X_i^k \chi_{\{X_i>M\} } \right].
\end{align*}
Hence,
\[
\limsup\limits_{j \rightarrow \infty} \mathbb E^{\nu_j} Z_i^k \leq \mathbb E^\nu Z_i^k .
\]
\end{proof}

We will also need the following lemma.

\begin{lemma}
\label{L location of moments}
For any $k \in \mathbb N$, let
\[
D_k = 
\left\{
x \in ]0,+\infty[^k : x_1 < x_2^{\frac 12} < x_3^{\frac 1 3} < \cdots < x_k^{\frac 1k} 
\right\} .
\]
For any non-negative random variable $Y$ such that $\mathbb E Y^k < + \infty$:
\begin{enumerate}
\item 
$\left( \mathbb EY, \mathbb EY^2, \ldots , \mathbb EY^k \right) \in D_k \cup \left\{
x \in [0,+\infty[^k : x_1 = x_2^{\frac 12} = x_3^{\frac 1 3} = \cdots = x_k^{\frac 1k} 
\right\} $.
\item 
$\mathbb EY=\left( \mathbb EY^2\right)^{\frac 12}=\left( \mathbb EY^3\right)^{\frac 13}= \ldots = \left( \mathbb EY^k \right)^{\frac 1k}$ if and only if $Y$ is degenerate, i.e., if and only if it takes a constant value almost surely.
\end{enumerate}
\end{lemma}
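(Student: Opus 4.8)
The plan is to derive both assertions from Lyapunov's inequality together with its equality case. Since $Y\geq 0$ and $\mathbb EY^k<+\infty$, the elementary bound $Y^m\leq 1+Y^k$, valid for $1\leq m\leq k$, shows that $\mathbb EY,\mathbb EY^2,\ldots,\mathbb EY^k$ are all finite, so the tuple under discussion is a well-defined point of $[0,+\infty[^k$; write $x_m=\left(\mathbb EY^m\right)^{1/m}$ for $m=1,\ldots,k$.

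The first step is monotonicity. Applying H\"older's inequality to the functions $Y^m$ and $1$ with exponents $p=\tfrac{m+1}{m}$ and $q=m+1$ gives
\[
\mathbb EY^m\leq\left(\mathbb EY^{m+1}\right)^{\frac{m}{m+1}},
\qquad m=1,2,\ldots,k-1,
\]
that is, $x_1\leq x_2\leq\cdots\leq x_k$. The second step is the equality case: the equality clause of H\"older's inequality says that $x_m=x_{m+1}$ can hold only if $\alpha\,Y^{m+1}=\beta$ almost surely for some constants $\alpha,\beta\geq 0$ not both zero; since $\alpha=0$ would force $\beta=0$, necessarily $\alpha>0$, so $Y^{m+1}$, and hence $Y$ itself, is almost surely constant. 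Thus equality at a \emph{single} link of the chain already forces $Y$ to be degenerate, which is what rules out ``mixed'' tuples.

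Both assertions now follow immediately. If all the inequalities $x_m<x_{m+1}$ are strict, then $Y$ is not degenerate (a degenerate $Y$, equal to a constant $a$ almost surely, would give $x_m=a$ for every $m$), hence $\mathbb EY>0$ and, by monotonicity, $x_1,\ldots,x_k>0$; together with the strict chain this places the tuple in $D_k$. If instead $x_m=x_{m+1}$ for some $m$, then $Y=a$ almost surely for some $a\geq 0$, so $\mathbb EY^m=a^m$ and $x_m=a$ for all $m$, and the tuple lies in $\left\{x\in[0,+\infty[^k:x_1=x_2^{1/2}=\cdots=x_k^{1/k}\right\}$. This is the first assertion. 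For the second, if $Y$ is degenerate then $\mathbb EY^m=a^m$ gives $x_1=\cdots=x_k=a$, while conversely $x_1=\cdots=x_k$ entails in particular $x_1=x_2$, which by the equality case above forces $Y$ to be degenerate.

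I expect the only delicate point to be the equality clause of H\"older's inequality, together with the bookkeeping of the degenerate edge cases ($Y=0$ almost surely, or $0<\Pr\{Y=0\}<1$, in which case $\mathbb EY^m$ remains strictly positive and $D_k$ is still reached); everything else is routine.
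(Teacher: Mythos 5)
Your proof is correct and follows essentially the same route as the paper: both rest on H\"older's (Lyapunov's) inequality $\mathbb EY^i \leq (\mathbb EY^j)^{i/j}$ together with its equality case forcing $Y^i$ (hence $Y$) to be almost surely constant. You apply it only to adjacent pairs $m,m+1$ and spell out the positivity and finiteness bookkeeping, which is a harmless variation on the paper's argument with general $i<j$.
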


\begin{proof}
Pick $i,j$ such that $1 \leq i < j \leq k$.
Using H\"older's inequality, we obtain
\begin{align*}
\mathbb EY^i \leq
\left( \mathbb E 1^{\frac j{j-i}} \right)^{\frac{j-i}j} \left( \mathbb E Y^j \right)^{\frac ij} =
\left( \mathbb E Y^j \right)^{\frac ij},
\end{align*}
with the equality holding if and only if $\{ 1, Y^i \}$ are linearly dependent.
\end{proof}

The main result in this section is the following.

\begin{theorem}
\label{T existence random}
\label{T existence of random maximizers}
If $\mathbb E X_i^{k_i}< \infty$ for $i=1,2, \ldots, n$ 
and the functions $\Psi_i$, $i=1,2,\ldots, n$ are continuous in $D_{k_i} \cup \left\{
x \in [0,+\infty[^{k_i} : x_1 = x_2^{\frac 12} = x_3^{\frac 1 3} = \cdots = x_{k_i}^{\frac 1{k_i}} 
\right\} $, then the expected utility functional \eqref{Eq expected utility relaxed} is upper semicontinuous in
$\overline{\mathcal H}_X$
and therefore admits a maximizer.
\end{theorem}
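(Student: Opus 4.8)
The plan is to prove that the functional $\rho$ defined by \eqref{Eq expected utility relaxed} is upper semicontinuous on $\overline{\mathcal H}_X$; since $\overline{\mathcal H}_X$ is compact by Proposition~\ref{P compact H} (relative compactness of $\mathcal H_X$ means precisely that $\overline{\mathcal H}_X$ is compact), an upper semicontinuous functional on it attains its supremum. As $\mathbb R^{2n}$ is separable the weak topology on $\mathcal P$ is metrizable, so it suffices to take $\nu \in \overline{\mathcal H}_X$ and a sequence $\{\nu_j\}\subset\overline{\mathcal H}_X$ with $\nu_j\to\nu$ weakly and show $\limsup_{j}\rho(\nu_j)\le\rho(\nu)$.

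First I would dispose of the premia. Every element of $\overline{\mathcal H}_X$ is concentrated on the closed set $\{0\le z_i\le x_i,\ i=1,\dots,n\}$ (this property passes to the weak closure by the Portmanteau theorem), so $\mathbb E^\nu Z_i^{k}\le\mathbb E X_i^{k}<\infty$ for $k\le k_i$ and Proposition~\ref{P continuity of momenta} applies, giving $\mathbb E^{\nu_j}Z_i^{k}\to\mathbb E^{\nu}Z_i^{k}$ for every $k\le k_i$ and every $i$. By Lemma~\ref{L location of moments} the moment vectors $\big(\mathbb E^{\nu_j}Z_i,\dots,\mathbb E^{\nu_j}Z_i^{k_i}\big)$ and their limit all lie in $D_{k_i}\cup\{x:x_1=x_2^{1/2}=\cdots=x_{k_i}^{1/k_i}\}$, on which $\Psi_i$ is assumed continuous, so $P_i(\nu_j)\to P_i(\nu)$ for each $i$ by \eqref{Eq premium moments relaxed}. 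Since, by \eqref{Eq net profit relaxed}, $L_{\nu_j}$ and $L_\nu$ differ only through the (deterministic) premia, we get $L_{\nu_j}(x,z)=L_\nu(x,z)+\varepsilon_j$ with $\varepsilon_j:=\sum_{i=1}^n\big(P_i(\nu)-P_i(\nu_j)\big)\to 0$.

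The utility term is where the work lies, and the main obstacle. The difficulty is that the integrand $U(L_{\nu_j})$ varies with $j$, that $U$ is only concave and nondecreasing — hence continuous on $]-\infty,c[$ and upper semicontinuous at $c$ (because $U(x)\le U(c)$ for $x<c$), but possibly unbounded below — and that $L_\nu$ may equal $c$ on a set of positive $\nu$-mass, which forbids bounding $U(L_{\nu_j})$ by $U(L_\nu+\delta)$ directly. To get around this, fix $\delta>0$; for all $j$ large enough that $\varepsilon_j\le\delta$ one has $L_\nu+\varepsilon_j=L_{\nu_j}\le c$ and therefore, $U$ being nondecreasing, $U(L_{\nu_j})\le U\big((L_\nu+\delta)\wedge c\big)$. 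The function $(x,z)\mapsto U\big((L_\nu(x,z)+\delta)\wedge c\big)$ is upper semicontinuous, being the composition of the continuous map $(x,z)\mapsto(L_\nu(x,z)+\delta)\wedge c$ (which takes values in $]-\infty,c]$) with the upper semicontinuous $U$, and it is bounded above by $U(c)$; truncating it below at level $-K$, applying the Portmanteau theorem in the form valid for bounded upper semicontinuous functions, and then letting $K\to\infty$ by monotone convergence, one obtains $\limsup_j\mathbb E^{\nu_j}U\big((L_\nu+\delta)\wedge c\big)\le\mathbb E^{\nu}U\big((L_\nu+\delta)\wedge c\big)$. Hence $\limsup_j\rho(\nu_j)\le\mathbb E^{\nu}U\big((L_\nu+\delta)\wedge c\big)$ for every $\delta>0$.

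It remains to let $\delta\downarrow 0$. Since $(L_\nu+\delta)\wedge c\downarrow L_\nu$ (using $L_\nu\le c$), since $U$ is continuous on $]-\infty,c[$, and since $(L_\nu+\delta)\wedge c=c$ wherever $L_\nu=c$, the functions $U\big((L_\nu+\delta)\wedge c\big)$ decrease pointwise to $U(L_\nu)$; being dominated above by the constant $U(c)$, monotone convergence gives $\mathbb E^{\nu}U\big((L_\nu+\delta)\wedge c\big)\downarrow\rho(\nu)$. Therefore $\limsup_j\rho(\nu_j)\le\rho(\nu)$, proving upper semicontinuity of $\rho$ on the compact set $\overline{\mathcal H}_X$ and, with it, existence of a maximizer. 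As indicated, the delicate point is this last limit passage in the utility term — reconciling the $j$-dependence of the integrand with weak convergence when $U$ is only one-sidedly regular and need not be bounded below; everything else reduces to Propositions~\ref{P compact H} and \ref{P continuity of momenta}, Lemma~\ref{L location of moments}, and the Portmanteau theorem.
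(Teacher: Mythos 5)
Your proposal is correct and follows essentially the same route as the paper's proof: continuity of the premia via Proposition \ref{P continuity of momenta}, Lemma \ref{L location of moments} and the continuity assumption on $\Psi_i$, then upper semicontinuity of the utility term by a truncation-plus-monotone-convergence argument, and finally Weierstrass on the compact set $\overline{\mathcal H}_X$. The only difference is in the bookkeeping of the truncation step: the paper bounds $U(L_{\nu_j})$ by $M \vee U(L_{\nu_j})$, passes the weak limit directly on this $j$-dependent bounded integrand and then lets $M \to -\infty$, whereas you first dominate $U(L_{\nu_j})$ by the fixed envelope $U\left((L_\nu+\delta)\wedge c\right)$ and invoke the upper-semicontinuous form of the Portmanteau theorem before letting $\delta \downarrow 0$ — an equivalent argument that in fact spells out the limit passage the paper leaves terse.
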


\begin{proof}
Unter the assumptions of the theorem, the premia $\nu \mapsto P_i(\nu) = \Psi_i \left(\mathbb E^\nu Z,\mathbb E^\nu Z^2, \ldots, \mathbb E^\nu Z^{k_i}\right)$ are continuous in $\overline{ \mathcal H}_X$.

Fix a sequence $\left\{ \nu_j \in
\overline{\mathcal H}_X
\right\}_{j \in \mathbb R}$ converging weakly to some $\nu \in \overline{\mathcal H}_X$.
By weak convergence, we have
\begin{align*}
\limsup_{j \to \infty} \mathbb E^{\nu_j} U \left( c - \sum_{i=1}^n \left(P_i (\nu_j) +X_i-Z_i\right)\right) \leq &
\lim_{j \to \infty} \mathbb E^{\nu_j} \left(M \vee U \left( c - \sum_{i=1}^n \left(P_i (\nu_j) +X_i-Z_i\right)\right)\right)
\\ = &
\mathbb E^\nu \left(M \vee U \left( c - \sum_{i=1}^n \left(P_i (\nu) +X_i-Z_i\right)\right)\right),
\end{align*}
for every $M \in ]-\infty, 0[$. 
By Lebesgue's monotome convergence theorem,
\begin{align*}
\mathbb E^\nu U \left( c - \sum_{i=1}^n \left(P_i (\nu_j) +X_i-Z_i\right)\right) =
\lim_{M \to -\infty} \mathbb E^\nu \left(M \vee U \left( c - \sum_{i=1}^n \left(P_i (\nu) +X_i-Z_i\right)\right)\right),
\end{align*}
and therefore the functional \eqref{Eq expected utility relaxed} is upper semicontinuous in $\overline{\mathcal H}_X$.

Existence of a maximizer follows by Weierstrass' theorem.
\end{proof}

Contrasting with existence, uniqueness of optimal randomized strategies seems difficult to prove in a general setting.
This may be surprising since, for any functional $\Phi: \mathcal Z \mapsto \mathbb R$ of type $\Phi (Z) = \mathbb E \left( \phi (X,Z(X) )\right)$, the extended functional $\Phi: \overline{\mathcal H}_X \mapsto \mathbb R$ is convex, provided the function $\phi: \mathbb R^{2n} \mapsto \mathbb R$ satisfies integrability conditions guaranteeing that the relevant expectations exist.
However, there are convex functionals $\Phi: \mathcal Z \mapsto \mathbb R$ such that the extension into $\mathcal H_X$ fails to be convex.
To see this, consider the variance functional
\[
\Phi (Z_i) = \mathbb E(Z_i(X)^2) - \left(\mathbb E Z_i(X)\right)^2 ,
\]
which is strictly convex in $\mathcal Z_i$.
A simple computation shows that its extension into $\mathcal H_X $ satisfies
\begin{align*}
\Phi ((1-\alpha) \eta + \alpha \nu) = &
\mathbb E^{(1-\alpha) \eta + \alpha \nu} Z_i^2 - \left(\mathbb E^{(1-\alpha) \eta + \alpha \nu} Z_i\right)^2 
\\ = &
(1-\alpha) \Phi (\eta) + \alpha \Phi (\nu) + \alpha(1-\alpha)\left( \mathbb E^\eta Z_i -\mathbb E^\nu Z_i \right)^2,
\end{align*}
for every $\nu, \eta \in \mathcal H_X$ and $\alpha \in ]0,1[$.
Thus, the variance is concave but not convex in $\mathcal H_X$.

Further, concavity of the utility function means that convexity of the premia functionals $P_i: \mathcal Z \mapsto \mathbb R$ guarantees concavity of the optimality criterion $\rho: \mathcal Z \mapsto [-\infty,c]$, but convexity of the premia functionals $P_i: \mathcal H_X \mapsto \mathbb R$ is not sufficient to guarantee concavity of $\rho : \mathcal H_X \mapsto [-\infty, c]$.
For example, if the utility function is exponential and the premia are computed by expect value principles, then the map $\nu \mapsto \mathbb E^\nu U(L_\nu)$ fails to be convex in $\mathcal H_X$.
We leave the computations proving this nonintuitive fact in Appendix \ref{Appendix concavity}.

Finally, let us mention that in Section \ref{S deterministic}, we provide conditions guaranteing that the optimal strategy must be deterministic.
In that case, it is straightforward to argument with concavity of the optimality criterion over the set of deterministic strategies to prove uniqueness of the optimal strategy.

\section{Optimality conditions}\label{S optimality conds}

For any $i \in \{1,2,\ldots, n\}$, $(\hat x_i, \hat z_i) \in \mathbb R^2$, and $\varepsilon \geq 0$, let $B_{i,\varepsilon} =B_{i,\varepsilon}(\hat x_i, \hat z_i)$ denote the cylinder
\begin{align*}
B_{i,\varepsilon} = 
\left\{ (x,z) \in \mathbb R^{2n} : (x_i- \hat x_i)^2 + (z_i-\hat z_i)^2 \leq \varepsilon^2 \right\}
\end{align*}

Fix 
$\nu \in \overline{\mathcal H}_X$.
We consider perturbations of $\nu$, that is, probability laws $\tilde \nu = \nu^{i,\hat x_i, \hat z_i,\alpha, \varepsilon}$ defined as
\begin{align*}
\tilde \nu(A) = &
\nu \left( A \setminus B_{i,\varepsilon} \right) +
\nu \left\{ (x,z) \in B_{i, \varepsilon} : (x,z+\alpha e_i) \in A \right\}
,
\end{align*}
where $e_i$ represents the canonical unit vector in direction $i$.

Before we proceed, we need to introduce a few lemmas.

\begin{lemma}
\label{L perturbation}
If $0 \leq \hat z_i < \hat x_i$ (resp., $0 < \hat z_i \leq \hat x_i$) and $0 \leq \alpha < \hat x_i - \hat z_i$ (resp., $\hat z_i-\hat x_i < \alpha \leq 0$) and $\nu \in \mathcal H_X$, then $\tilde \nu \in \mathcal H_X$ for every sufficiently small $\varepsilon \geq 0$.
Under the same conditions, if $\nu \in \overline{\mathcal H}_X$, then $\tilde \nu \in \overline{\mathcal H}_X$ for every sufficiently small $\varepsilon \geq 0$.
\end{lemma}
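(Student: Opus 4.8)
The plan is to argue via the characterization of $\mathcal H_X$ given by \eqref{Eq compact H 1}--\eqref{Eq compact H 2}: $\nu\in\mathcal H_X$ if and only if $\nu$ is concentrated on $\{(x,z):0\le z_j\le x_j,\ j=1,\dots,n\}$ and $\nu(d(x,z))=\prod_{j=1}^n\kappa_j(x_j,dz_j)\,\mu_X(dx)$ for some regular conditional probabilities $\kappa_j=\nu_{Z_j|X_j}$. The observation that makes everything work is that the perturbation is \emph{local} in the pair $(x_i,z_i)$: the cylinder $B_{i,\varepsilon}$ is defined only through $(x_i,z_i)$ and the shift $T(x,z)=(x,z+\alpha e_i)$ changes only the coordinate $z_i$. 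So I would aim to show that $\tilde\nu$ again has this product form, with the same $\mu_X$ and the same kernels $\kappa_j$ for $j\ne i$, and with $\kappa_i$ replaced by a suitably modified kernel.

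Concretely, for each $x_i$ set $E_{x_i}=\{z_i:(x_i-\hat x_i)^2+(z_i-\hat z_i)^2\le\varepsilon^2\}$, which is a closed interval centred at $\hat z_i$ of half-length at most $\varepsilon$ when $|x_i-\hat x_i|\le\varepsilon$ and empty otherwise, and define
\[
\tilde\kappa_i(x_i,D)=\kappa_i\big(x_i,D\setminus E_{x_i}\big)+\kappa_i\big(x_i,(D-\alpha)\cap E_{x_i}\big),
\]
i.e. $\tilde\kappa_i(x_i,\cdot)$ is $\kappa_i(x_i,\cdot)$ with its restriction to $E_{x_i}$ translated by $\alpha$. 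Then I would check three things. (a) $\tilde\kappa_i$ is again a regular conditional probability: it has total mass $1$ for every $x_i$ because a translation preserves mass, and it is measurable in $x_i$ since $(x_i,z_i)\mapsto E_{x_i}$ and the translation are Borel. (b) Evaluating both sides on measurable rectangles $\big(\prod_jA_j\big)\times\big(\prod_jD_j\big)$ and using that $B_{i,\varepsilon}$ and $T$ involve only $(x_i,z_i)$, one gets $\tilde\nu(d(x,z))=\tilde\kappa_i(x_i,dz_i)\prod_{j\ne i}\kappa_j(x_j,dz_j)\,\mu_X(dx)$; since such rectangles form a generating $\pi$-system and both members are probability measures, the identity extends to all Borel sets. (c) $\tilde\nu$ is concentrated on the admissible set: the coordinates $j\ne i$ are untouched, and the hypotheses on $(\hat x_i,\hat z_i,\alpha)$ are exactly what guarantees that $\big([0,x_i]\cap E_{x_i}\big)+\alpha\subseteq[0,x_i]$ as soon as $\varepsilon$ is small enough — explicitly one can take $\varepsilon$ below $\tfrac12(\hat x_i-\hat z_i-\alpha)$ in the first case and below $\tfrac12(\hat z_i+\alpha)$ in the second — so that $\tilde\kappa_i(x_i,[0,x_i])=\kappa_i(x_i,[0,x_i])=1$ for $\mu_{X_i}$-a.e.\ $x_i$. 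Points (a)--(c) give $\tilde\nu\in\mathcal H_X$. I expect (b) to be the most tedious step but entirely routine; the genuine content is in (c), and the point to be careful about is that the admissible range of $\varepsilon$ there depends only on $\mu_X$ (through $\mu_{X_i}$) and on $\hat x_i,\hat z_i,\alpha$, not on $\nu$ itself.

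For the closure statement, let $\nu\in\overline{\mathcal H}_X$ and pick $\nu_k\in\mathcal H_X$ converging weakly to $\nu$. By the first part, $\tilde\nu_k:=\nu_k^{i,\hat x_i,\hat z_i,\alpha,\varepsilon}\in\mathcal H_X$ for every $\varepsilon\le\varepsilon_0$, with one and the same $\varepsilon_0>0$ for all $k$, since all the $\nu_k$ share the marginal $\mu_X$. Writing, for bounded continuous $\varphi$,
\[
\tilde\nu_k(\varphi)=\nu_k(\varphi)+\nu_k\big((\varphi\circ T-\varphi)\,\chi_{B_{i,\varepsilon}}\big),
\]
the integrand $(\varphi\circ T-\varphi)\chi_{B_{i,\varepsilon}}$ is bounded and its set of discontinuities is contained in $\partial B_{i,\varepsilon}$, so whenever $\nu(\partial B_{i,\varepsilon})=0$ the Portmanteau theorem yields $\tilde\nu_k(\varphi)\to\tilde\nu(\varphi)$, hence $\tilde\nu_k$ converges weakly to $\tilde\nu$ and therefore $\tilde\nu\in\overline{\mathcal H}_X$. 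The cylinders $\partial B_{i,\varepsilon}$, $\varepsilon>0$, are pairwise disjoint, so $\nu(\partial B_{i,\varepsilon})>0$ for at most countably many $\varepsilon$; for an exceptional $\varepsilon\le\varepsilon_0$ I would take $\varepsilon_m\downarrow\varepsilon$ with $\nu(\partial B_{i,\varepsilon_m})=0$, note $\tilde\nu^{(\varepsilon_m)}\in\overline{\mathcal H}_X$ by the step just proved, and let $m\to\infty$: since $\chi_{B_{i,\varepsilon_m}}\downarrow\chi_{B_{i,\varepsilon}}$ pointwise, dominated convergence gives $\tilde\nu^{(\varepsilon_m)}(\varphi)\to\tilde\nu^{(\varepsilon)}(\varphi)$, whence $\tilde\nu^{(\varepsilon)}\in\overline{\mathcal H}_X$ as well. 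The main obstacle in this last part is precisely that $\nu\mapsto\tilde\nu$ need not be weakly continuous at measures charging $\partial B_{i,\varepsilon}$, which is why the disjointness of the boundary cylinders and the monotone approximation in $\varepsilon$ are needed; the rest is bookkeeping with the product disintegration.
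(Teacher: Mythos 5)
Your proof is correct, and on the closure statement it takes a genuinely different route from the paper. For the $\mathcal H_X$ part the two arguments have the same content: the paper simply asserts that concentration on $\{0\le z_i\le x_i\}$ survives for small $\varepsilon$ and that conditional independence of $Z_i$ from $(X_{[i]},Z_{[i]})$ given $X_i$ is preserved, whereas you make this explicit by exhibiting the modified kernel $\tilde\kappa_i$ and re-deriving the product disintegration on rectangles; that is more work but it documents precisely why only the $i$-th kernel changes. For the closure part the paper proceeds differently: it sandwiches the indicator, $\chi_{B_{i,\varepsilon}}\le\gamma_\eta\le\chi_{B_{i,\varepsilon+\eta}}$ with $\gamma_\eta$ continuous, applies the resulting \emph{randomized} shift (move the mass with probability $\gamma_\eta$) to the approximants $\nu_j\in\mathcal H_X$, passes to the limit in $j$ against continuous bounded test functions, and then lets $\eta\to 0^+$ by dominated convergence; this yields $\tilde\nu\in\overline{\mathcal H}_X$ for every admissible $\varepsilon$ in one stroke, at the cost of introducing the auxiliary measures $\hat\nu_{j,\eta}$. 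You instead keep the sharp indicator and invoke the mapping/Portmanteau theorem, which forces the hypothesis $\nu(\partial B_{i,\varepsilon})=0$, and you then recover the exceptional radii via disjointness of the boundary cylinders and a monotone limit $\varepsilon_m\downarrow\varepsilon$ (taking the $\varepsilon_m$ still below the admissibility threshold, as you implicitly do); both devices are standard and equally rigorous, the paper's mollification avoiding your case distinction on $\varepsilon$, your argument avoiding the double limit and the auxiliary randomized perturbations. One further point your explicit bounds bring out: in the case $\alpha\le 0$ your threshold $\varepsilon<\tfrac12(\hat z_i+\alpha)$ requires $\alpha>-\hat z_i$, which is indeed what is needed to keep the shifted mass in $\{z_i\ge 0\}$; the hypothesis as printed in the lemma, $\hat z_i-\hat x_i<\alpha\le 0$, only implies this when $2\hat z_i\ge\hat x_i$, so your computation in effect identifies the condition under which the statement holds (in the paper's use of the lemma $\alpha\to 0^-$, so nothing downstream is affected).
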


\begin{proof}
Consider the case $0 \leq \hat z_i < \hat x_i$ and $0 \leq \alpha < \hat x_i - \hat z_i$.
\\
For $\varepsilon < \hat x_i - \hat z_i - \alpha$, $\tilde \nu $ satisfies
\[
\tilde \nu \{(x,z): 0 \leq z_i\leq x_i, i=1,2,\ldots , n\}=1 .
\]
Further, if $Z_i$ and $(X_{[i]},Z_{[i]})$ are conditionally independent given $X_i$ with respect to $\nu$, then they are also conditionally independent with respect to $\tilde \nu$.
Thus, we only need to prove the second statement in the Lemma.
\\
Fix $\nu \in \overline{\mathcal H}_X$, and a sequence $\{\nu_j \in \mathcal H_X \}_{j \in \mathbb N}$, converging weakly to $\nu$.
Notice that
\[
\tilde \nu(A) = 
\int_{ \mathbb R^{2n}}
\chi_A(x,z) \left( 1 - \chi_{B_{i,\varepsilon}}(x,z) \right) +
\chi_A(x,z+ \alpha e_i ) \chi_{B_{i,\varepsilon}}(x,z)  \, \nu (d(x,z)) .
\]
For every $\eta>0$, there is a continuous function $\gamma_\eta: \mathbb R^{2n} \mapsto [0,1]$ such that
\[
\chi_{ B_{i,\varepsilon}} (x,z) \leq
\gamma_\eta (x,z) \leq
\chi_{ B_{i,\varepsilon+ \eta}} (x,z) \qquad
\forall (x,z) \in \mathbb R^{2n}.
\]
We introduce the measures $\hat \nu_{j,\eta}$, defined as
\[
\hat \nu_{j,\eta }(A) = 
\int_{ \mathbb R^{2n}}
\chi_A(x,z) \left( 1 - \gamma_\eta(x,z) \right) +
\chi_A(x,z+ \alpha e_i ) \gamma_\eta(x,z)  \, \nu_j (d(x,z)) .
\]
Notice that $\hat \nu_{j,\eta} \in \mathcal H_X$ whenever $\varepsilon+ \eta < \hat x_i- \hat z_i - \alpha $.
\\
Since $\gamma_\eta$ is continuous, for every continuous bounded functions $\varphi: \mathbb R^{2n} \mapsto \mathbb R$, we have
\begin{align*}
\lim_{j \to \infty} \mathbb E^{\hat \nu_{j,\eta }} \varphi = 
\int_{ \mathbb R^{2n}} \varphi(x,z)(1 - \gamma_\eta (x,z)) +
\varphi(x,z+\alpha e_i)\gamma_\eta (x,z) \, \nu (d(x,z)).
\end{align*}
That is, for every sufficiently small $\eta>0$, the measure
\[
\hat \nu_\eta (A) = 
\int_{ \mathbb R^{2n}}
\chi_A(x,z) \left( 1 - \gamma_\eta(x,z) \right) +
\chi_A(x,z+ \alpha e_i ) \gamma_\eta(x,z)  \, \nu (d(x,z)) 
\]
is an element of $\overline{\mathcal H}_X$.
Since $\lim\limits_{\eta \to 0^+} \gamma_\eta = \chi_{ B_{i,\varepsilon}}$ pointwise, it follows from Lebesgue's dominated convergence theorem that $
\lim\limits_{\eta \to 0^+} \mathbb E^{\hat \nu_\eta} \varphi = \mathbb E^\nu \varphi
$ for every bounded continuous $\varphi: \mathbb R^{2n} \mapsto \mathbb R$, and therefore $\nu \in \overline{\mathcal H}_X$.
\\
The case $0 < \hat z_i \leq \hat x_i$ differs only in the bounds for $\varepsilon$ and $\eta$.
\end{proof}

\begin{lemma}
\label{L perturbation of moments}
If $\mathbb E X_i^k <+\infty$ then, for any $\nu \in \overline{\mathcal H}_X$:
\begin{align}
& \label{Eq perturbation of moments}
\left( \mathbb E^{\tilde \nu} - \mathbb E^\nu \right)Z_i^k =
\alpha \nu \left( B_{i,\varepsilon} \right) \left( k \hat z_i^{k-1} + O(|\alpha|) + O(\varepsilon)\right) ,
\end{align}
when $\alpha \to 0$
and
$\varepsilon \to 0^+$.
\end{lemma}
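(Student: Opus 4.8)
The plan is to reduce the claimed asymptotic identity to an elementary integral over the cylinder $B_{i,\varepsilon}$ and then expand it in powers of $\alpha$, using the fact that on $B_{i,\varepsilon}$ the coordinate $z_i$ is pinned near $\hat z_i$.

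First I would write an exact formula for $\left(\mathbb E^{\tilde\nu}-\mathbb E^\nu\right)Z_i^k$. By the definition of the perturbation, $\tilde\nu$ agrees with $\nu$ off $B_{i,\varepsilon}$ and, on $B_{i,\varepsilon}$, is the push-forward of $\nu$ under the translation $(x,z)\mapsto(x,z+\alpha e_i)$; hence for the Borel function $(x,z)\mapsto z_i^k$,
\[
\mathbb E^{\tilde\nu}Z_i^k=\int_{\mathbb R^{2n}\setminus B_{i,\varepsilon}}z_i^k\,\nu(d(x,z))+\int_{B_{i,\varepsilon}}(z_i+\alpha)^k\,\nu(d(x,z)).
\]
Since every $\nu\in\overline{\mathcal H}_X$ is concentrated on $\{0\le z_i\le x_i\}$ (the closed condition \eqref{Eq compact H 1}) and satisfies $\nu_X=\mu_X$, we have $\mathbb E^\nu Z_i^k\le\mathbb E X_i^k<\infty$; as $z_i$ is bounded on $B_{i,\varepsilon}$, it follows that $\mathbb E^{\tilde\nu}Z_i^k<\infty$ as well. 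Consequently the integrals over $\mathbb R^{2n}\setminus B_{i,\varepsilon}$ are finite and cancel when one subtracts $\mathbb E^\nu Z_i^k=\int_{\mathbb R^{2n}\setminus B_{i,\varepsilon}}z_i^k\,\nu(d(x,z))+\int_{B_{i,\varepsilon}}z_i^k\,\nu(d(x,z))$, leaving
\[
\bigl(\mathbb E^{\tilde\nu}-\mathbb E^\nu\bigr)Z_i^k=\int_{B_{i,\varepsilon}}\bigl[(z_i+\alpha)^k-z_i^k\bigr]\,\nu(d(x,z)).
\]
This identity is purely formal; the hypothesis $\mathbb E X_i^k<\infty$ enters only to justify the termwise cancellation.

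Next I would expand the integrand pointwise on $B_{i,\varepsilon}$. There $|z_i-\hat z_i|\le\varepsilon$, so for $\varepsilon\le1$ and $|\alpha|\le1$ the variable $z_i$ ranges over a fixed compact interval $K=K(\hat z_i)$. The binomial theorem gives $(z_i+\alpha)^k-z_i^k=k\,z_i^{k-1}\alpha+\sum_{j\ge2}\binom kj z_i^{k-j}\alpha^j$, and the sum is $O(|\alpha|^2)$ uniformly for $z_i\in K$; the mean value theorem gives $|z_i^{k-1}-\hat z_i^{k-1}|\le C\varepsilon$ uniformly for $z_i\in K$. Hence
\[
(z_i+\alpha)^k-z_i^k=\alpha\bigl(k\,\hat z_i^{k-1}+O(\varepsilon)+O(|\alpha|)\bigr),
\]
with the error terms bounded uniformly over $(x,z)\in B_{i,\varepsilon}$ as $\alpha\to0$, $\varepsilon\to0^+$. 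Integrating this over $B_{i,\varepsilon}$ against $\nu$ and using that the $O(\varepsilon)$ and $O(|\alpha|)$ contributions are uniformly controlled there, the common factor $\nu(B_{i,\varepsilon})$ pulls out and we obtain exactly \eqref{Eq perturbation of moments}.

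I do not expect any genuine obstacle here. The only points that require care are the integrability bookkeeping in the first step — precisely where $\mathbb E X_i^k<\infty$ is used to legitimize the cancellation of the parts of the integral supported off $B_{i,\varepsilon}$ — and making sure the $O(\cdot)$ constants in the second step are uniform over the cylinder; everything else is a one-line binomial expansion.
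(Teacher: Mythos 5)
Your proposal is correct and follows essentially the same route as the paper: both reduce the difference to the exact identity $\left(\mathbb E^{\tilde\nu}-\mathbb E^\nu\right)Z_i^k=\int_{B_{i,\varepsilon}}\left[(z_i+\alpha)^k-z_i^k\right]d\nu$ and then expand around $\hat z_i$ using $|z_i-\hat z_i|\le\varepsilon$ on the cylinder, the only cosmetic difference being that you use the binomial theorem plus the mean value theorem where the paper writes the increment as $\int_0^1 k(z_i+t\alpha)^{k-1}\alpha\,dt$ before Taylor-expanding. Your extra integrability bookkeeping via $0\le z_i\le x_i$ and $\nu_X=\mu_X$ is a harmless (and valid) elaboration of what the paper leaves implicit.
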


\begin{proof}
It follows from the definition of $\tilde \nu$ that
\begin{align*}
	\left( \mathbb E^{\tilde \nu} - \mathbb E^\nu \right)Z_i^k = &
	\int_{B_{i,\varepsilon}} \left( (z_i+\alpha)^k - z_i^k \right) d \nu .
\end{align*}
Thus, for $k=1$, \eqref{Eq perturbation of moments} holds with $\left(\mathbb E^{\tilde \nu} - \mathbb E^\nu \right)Z_i = \alpha \nu \left(B_{i,\varepsilon}\right)$.
For $k\geq 2$,
\begin{align*}
&
\left( \mathbb E^{\tilde \nu} - \mathbb E^\nu \right)Z_i^k =
\int_{B_{i,\varepsilon}} \int_0^1 k(z_i+t\alpha)^{k-1} \alpha \, dt\, d \nu 
\\ = &
\alpha \int_{B_{i,\varepsilon}} \int_0^1 k\left( \hat z_i^{k-1} + \left( (z_i+t\alpha)^{k-1} - z_i^{k-1} \right) + \left( z_i^{k-1} -\hat z_i^{k-1} \right)  \right) \, dt \, d \nu
\\ = &
\alpha \int_{B_{i,\varepsilon}} \int_0^1 k\left( \hat z_i^{k-1} + \left( (k-1)z_i^{k-2} t\alpha+ o(\alpha) \right) + \left( (k-1)\hat z_i^{k-2}(z_i-\hat z_i) + o(|z_i-\hat z_i|) \right)  \right) \, dt \, d \nu
.
\end{align*}
Since $z_i \in [\hat z_i - \varepsilon, \hat z_i + \varepsilon ]$, this is
\begin{align*}
	&
	\left( \mathbb E^{\tilde \nu} - \mathbb E^\nu \right)Z_i^k =
	\alpha \int_{B_{i,\varepsilon}} \int_0^1 k\left( \hat z_i^{k-1} + O(|\alpha|) + O(\varepsilon) \right) \, dt \, d \nu
	.
\end{align*}
\end{proof}

\begin{lemma}
\label{L premium perturbation}
Let $P_i$ be the premium calculation principle \eqref{Eq premium moments relaxed}, and suppose that $\Psi_i$ is continuously differentiable in $D_{k_i}$.
Then, for every $\nu \in \overline{\mathcal H}_X$ such that $\nu_{Z_i}$ is not concentrated at one single point:
\begin{align}
& \label{Eq perturbation of premium}
P_i(\tilde \nu) - P_i(\nu) =
\alpha \nu\left( B_{i,\varepsilon} \right) \left(
\sum_{j=1}^{k_i}
j \hat z_i^{j-1} \frac{\partial \Psi_i }{\partial u_j} + O \left( |\alpha| \right) + O(\varepsilon)
\right),
\end{align}
when $\alpha \to 0$
and
$\varepsilon \to 0^+$.
Here, the partial derivatives $\frac{\partial \Psi_i }{\partial u_j}$ are evaluated at the point $u=\left( \mathbb E^\nu Z_i, \mathbb E^\nu Z_i^2 , \ldots , \mathbb E^\nu Z_i^{k_i} \right)$.

If $\nabla \Psi_i: D_{k_i} \mapsto \mathbb R^{k_i}$ can be extended by continuity to the set
\[
\left\{ x \in [0,+\infty[^{k_i}: x_1 = x_2^{\frac 1 2} = \ldots = x_{k_i}^{\frac {1}{k_i}} \right\} ,
\]
then \eqref{Eq perturbation of premium} holds for every $\nu \in \overline{\mathcal H}_X$.
\end{lemma}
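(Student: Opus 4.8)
The plan is to obtain \eqref{Eq perturbation of premium} as a first‑order Taylor expansion of $\Psi_i$ driven by the moment expansions of Lemma \ref{L perturbation of moments}. Write $u = \left(\mathbb E^\nu Z_i, \mathbb E^\nu Z_i^2, \ldots, \mathbb E^\nu Z_i^{k_i}\right)$ and $\tilde u = \left(\mathbb E^{\tilde\nu} Z_i, \mathbb E^{\tilde\nu} Z_i^2, \ldots, \mathbb E^{\tilde\nu} Z_i^{k_i}\right)$, so that $P_i(\tilde\nu) - P_i(\nu) = \Psi_i(\tilde u) - \Psi_i(u)$. Applying Lemma \ref{L perturbation of moments} for $k=1,2,\ldots,k_i$ gives
\[
\tilde u_j - u_j = \alpha\,\nu\!\left(B_{i,\varepsilon}\right)\left(j\hat z_i^{j-1} + O(|\alpha|) + O(\varepsilon)\right),\qquad j=1,\ldots,k_i ,
\]
and since $\nu\!\left(B_{i,\varepsilon}\right)\le 1$, in particular $|\tilde u - u| = O(|\alpha|)$ as $\alpha\to 0$, $\varepsilon\to 0^+$, uniformly for $\varepsilon$ in a neighbourhood of $0$.

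First I would treat the case where $\nu_{Z_i}$ is not concentrated at a single point. Then Lemma \ref{L location of moments} gives $u\in D_{k_i}$, and since $D_{k_i}$ is open and $\Psi_i$ is continuously differentiable there, there is $r>0$ with $\overline{B(u,r)}\subset D_{k_i}$; by the estimate above, for $|\alpha|$ small enough the whole segment $[u,\tilde u]$ lies in $B(u,r)$. The fundamental theorem of calculus then yields
\[
\Psi_i(\tilde u) - \Psi_i(u) = \int_0^1 \sum_{j=1}^{k_i} \frac{\partial\Psi_i}{\partial u_j}\bigl(u + t(\tilde u - u)\bigr)\,(\tilde u_j - u_j)\, dt .
\]
Substituting the expansion of $\tilde u_j - u_j$, factoring out $\alpha\,\nu\!\left(B_{i,\varepsilon}\right)$, and using that $u + t(\tilde u - u)\to u$ uniformly in $t\in[0,1]$ together with continuity of $\nabla\Psi_i$ at $u$ to write $\frac{\partial\Psi_i}{\partial u_j}\bigl(u + t(\tilde u - u)\bigr) = \frac{\partial\Psi_i}{\partial u_j}(u) + o(1)$, gives \eqref{Eq perturbation of premium}, the $o(1)$ merging with the $O(|\alpha|)+O(\varepsilon)$ terms into the stated remainder.

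For the second statement, note that if $\nu_{Z_i}$ is not concentrated at a point the first part already applies, so it remains to handle $\nu_{Z_i}=\delta_{z^*}$, in which case $u=\left(z^*,(z^*)^2,\ldots,(z^*)^{k_i}\right)$ lies on the diagonal set $\left\{x : x_1 = x_2^{1/2} = \cdots = x_{k_i}^{1/k_i}\right\}$. The key geometric observation is that $\overline{D_{k_i}} = \left\{x\in[0,+\infty[^{k_i} : x_1\le x_2^{1/2}\le\cdots\le x_{k_i}^{1/k_i}\right\}$ is convex, because each constraint $x_j^{1/j}\le x_{j+1}^{1/(j+1)}$ is equivalent to $x_{j+1}\ge x_j^{(j+1)/j}$, the epigraph of a convex function. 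By Lemma \ref{L location of moments} both $u$ and $\tilde u$ lie in $\overline{D_{k_i}}$, hence $[u,\tilde u]\subset\overline{D_{k_i}}$; moreover, since $D_{k_i}$ is the interior of this convex set and (by strict convexity of $s\mapsto s^{(j+1)/j}$, when $u\neq\tilde u$) the segment meets $\partial D_{k_i}$ at most at its two endpoints, the map $t\mapsto\Psi_i\bigl(u+t(\tilde u-u)\bigr)$ is continuous on $[0,1]$, continuously differentiable off a finite set with derivative bounded by $\sup|\overline{\nabla\Psi_i}|\cdot|\tilde u-u|$ (where $\overline{\nabla\Psi_i}$ is the continuous extension of $\nabla\Psi_i$ to $\overline{D_{k_i}}$), hence absolutely continuous. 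The fundamental theorem of calculus is therefore valid with $\overline{\nabla\Psi_i}$ in place of $\nabla\Psi_i$, and one concludes exactly as in the first part, now using uniform continuity of $\overline{\nabla\Psi_i}$ on a compact neighbourhood of $u$ (and discarding the trivial subcase $\alpha\,\nu(B_{i,\varepsilon})=0$, where both sides of \eqref{Eq perturbation of premium} vanish).

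The main obstacle I expect is precisely this boundary case: legitimizing the fundamental theorem of calculus along a segment that may touch $\partial D_{k_i}$, which forces one to establish convexity of $\overline{D_{k_i}}$ (so the segment stays in the closure) and absolute continuity of $\Psi_i$ restricted to the segment (so its increment is recovered by integrating the extended gradient). Everything else — the interior case and the bookkeeping of the $O(|\alpha|)+O(\varepsilon)$ terms — is routine once Lemma \ref{L perturbation of moments} is in hand.
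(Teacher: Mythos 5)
Your proposal is correct and follows essentially the same route as the paper's proof, which consists precisely of writing $P_i(\tilde \nu)-P_i(\nu)=\int_0^1 \frac d{dt}\Psi_i\bigl(\mathbb E^\nu Z_i+t(\mathbb E^{\tilde\nu}-\mathbb E^\nu)Z_i,\ldots\bigr)\,dt$, justified by Lemma \ref{L location of moments}, and then invoking Lemma \ref{L perturbation of moments}; you simply make explicit the interior/boundary bookkeeping (convexity of the closure of $D_{k_i}$, the segment meeting the diagonal only at its endpoints, continuity of the extended gradient) that the paper leaves implicit. Note only that both your write-up and the paper record the error coming from mere continuity of $\nabla\Psi_i$ as $O(|\alpha|)+O(\varepsilon)$ when strictly it is $o(1)$ unless the gradient is locally Lipschitz, a harmless looseness given how the lemma is used in Theorem \ref{T optimality condition relaxed}.
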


\begin{proof}
Due to Lemma \ref{L location of moments}, we have
\begin{align*}
&
P_i(\tilde \nu) - P_i(\nu) = 
\\ = &
\int_0^1 \frac d{dt} \Psi_i \left( \mathbb E^\nu Z_i+ t( \mathbb E^{\tilde \nu} - \mathbb E^\nu)  Z_i , \ldots , \mathbb E^\nu Z_i^{k_i} + t( \mathbb E^{\tilde \nu} - \mathbb E^\nu)  Z_i^{k_i} \right) dt .
\end{align*}
Hence, the Lemma follows from Lemma \ref{L perturbation of moments}.
\end{proof}

Now, we can formulate the main result in this section:

\begin{theorem}
\label{T optimality condition relaxed}
Let $U:]-\infty,c] \mapsto \mathbb R$ be continuously differentiable in $]-\infty,c[$, let
$\nu \in \overline{\mathcal H}_X$ be an optimal randomized strategy, and suppose that $\mathbb E^\nu U(L_\nu) > -\infty$.
Fix $i \in \{1,2, \ldots , n\}$ such that the function $\Psi_i$ is
continuous in $D_{k_i} \cup \left\{ u \in [0,+\infty[^{k_i}: u_i = u_2^{\frac 1 2} = \ldots = u_{k_i}^{\frac 1{k_i}} \right\}$, continuously differentiable in $D_{k_i}$, and suppose the following assumptions hold:
\begin{description}
\item[{\bf (A1)}]
Either the gradient $\nabla \Psi_i: D_{k_i} \mapsto \mathbb R^{k_i}$ can be extended by continuity to the set 
\linebreak
$\left\{ u \in [0,+\infty[^{k_i}: u_1=u_2^{\frac 12} = \ldots = u_{k_i}^{\frac 1{k_i}} \right\}$, or the marginal distribution $\nu_{Z_i}$ is not concentrated at a single point.
\item[{\bf (A2)}]
There is some $\delta >0$ such that $\mathbb E U(L_\nu - \delta ) >-\infty$.
\end{description}
Then:
\begin{enumerate}
\item \label{T optimality S1}
The inequality 
\begin{align}
& \label{Eq optimility condition a>0}
\left\lfloor \mathbb E^\nu_{\left. \left( X_{[i]}, Z_{[i]} \right) \right| X_i } U^\prime \left( L_\nu \right) \right\rfloor(\hat x_i, \hat z_i) \leq
\sum_{j=1}^{k_i} j \hat z_i^{j-1} \frac{\partial \Psi_i}{\partial u_j}
\mathbb E^\nu U^\prime (L_\nu) , 
\end{align}
holds for any $(\hat x_i, \hat z_i)\in \mathbb R^2$ such that 
\begin{align}
& \label{Eq z<x}
0 \leq \hat z_i < \hat x_i, \qquad
\nu (B_{i,\varepsilon}) >0 \ \ \forall \varepsilon>0.
\end{align}
\item \label{T optimality S2}
The inequality 
\begin{align}
& \label{Eq optimility condition a<0}
\left\lceil \mathbb E^\nu_{\left. \left( X_{[i]}, Z_{[i]} \right) \right| X_i } U^\prime ( L_\nu ) \right\rceil(\hat x_i, \hat z_i) \geq
\sum_{j=1}^{k_i} j \hat z_i^{j-1} \frac{\partial \Psi_i}{\partial u_j}
\mathbb E^\nu U^\prime (L_\nu) ,
\end{align}
holds for any $(\hat x_i, \hat z_i)\in \mathbb R^2$ such that 
\begin{align}
& \label{Eq z>0}
0 < \hat z_i \leq \hat x_i, \qquad
\nu (B_{i,\varepsilon}) >0 \ \ \forall \varepsilon>0.
\end{align}
\end{enumerate}
Here, $\lfloor \varphi \rfloor$ and $\lceil \varphi \rceil$ denote, respectively, the lower semicontinuous and the upper semicontinuous envelopes of the function $\varphi$. 
The partial derivatives $\frac{\partial \Psi_i }{\partial u_j}$ are evaluated at the point $u=\left( \mathbb E^\nu Z_i, \mathbb E^\nu Z_i^2 , \ldots , \mathbb E^\nu Z_i^{k_i} \right)$.

Under assumption {\bf (A1)} alone (i.e., {\bf (A2)} may fail):
\begin{enumerate}[resume]
\item \label{T optimality S3}
If there is some $(\hat x_i, \hat z_i) \in \mathbb R^2$ satisfying \eqref{Eq z<x}, such that $\sum\limits_{j=1}^{k_i} j \hat z_i^{j-1} \frac{\partial \Psi_i}{\partial u_j} < 0$, then
\linebreak
$\nu \left\{ U(L_\nu) = U(c) \right\} = 1 $.
\item \label{T optimality S4}
Inequality \eqref{Eq optimility condition a<0}
holds for every $(\hat x_i, \hat z_i)\in \mathbb R^2$ satisfying \eqref{Eq z>0}
such that $\sum\limits_{j=1}^{k_i} j \hat z_i^{j-1} \frac{\partial \Psi_i}{\partial u_j} >0$.
\end{enumerate}
\end{theorem}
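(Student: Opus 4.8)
The plan is a first‑order perturbation argument at the optimizer $\nu$, using the admissible perturbations $\tilde\nu=\nu^{i,\hat x_i,\hat z_i,\alpha,\varepsilon}$ introduced just before Lemma~\ref{L perturbation}. Fix $(\hat x_i,\hat z_i)$ in the relevant range. For $|\alpha|$ and $\varepsilon$ small, Lemma~\ref{L perturbation} gives $\tilde\nu\in\overline{\mathcal H}_X$, so optimality of $\nu$ yields $\rho(\tilde\nu)\le\rho(\nu)$. Since the perturbation shifts only the $z_i$‑coordinate it leaves the laws $\nu_{Z_j}$, $j\neq i$, unchanged, so with $\Delta P:=P_i(\tilde\nu)-P_i(\nu)$ one has $L_{\tilde\nu}=L_\nu-\Delta P$ identically, and unwinding the definition of $\tilde\nu$,
\begin{align*}
\rho(\tilde\nu)-\rho(\nu)=\int_{\mathbb R^{2n}\setminus B_{i,\varepsilon}}\!\bigl(U(L_\nu-\Delta P)-U(L_\nu)\bigr)\,d\nu+\int_{B_{i,\varepsilon}}\!\bigl(U(L_\nu+\alpha-\Delta P)-U(L_\nu)\bigr)\,d\nu .
\end{align*}
Lemma~\ref{L premium perturbation}, whose hypotheses are exactly the smoothness assumptions on $\Psi_i$ together with {\bf (A1)}, gives $\Delta P=\alpha\,\nu(B_{i,\varepsilon})\bigl(S_i(\hat z_i)+O(|\alpha|)+O(\varepsilon)\bigr)$ with $S_i(\hat z_i):=\sum_{j=1}^{k_i}j\,\hat z_i^{\,j-1}\,\partial\Psi_i/\partial u_j$.

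For parts \ref{T optimality S1}--\ref{T optimality S2} I divide by $\alpha$ and let $\alpha\to0$. Here {\bf (A2)} is used: by concavity the difference quotient $t\mapsto\bigl(U(L_\nu)-U(L_\nu-t)\bigr)/t$ is nonincreasing, hence dominated for $0<t<\delta$ by the $\nu$‑integrable function $\delta^{-1}\bigl(U(L_\nu)-U(L_\nu-\delta)\bigr)$; in particular $\mathbb E^\nu U'(L_\nu)<\infty$, and dominated convergence in each of the two integrals above gives
\[
\lim_{\alpha\to0}\frac{\rho(\tilde\nu)-\rho(\nu)}{\alpha}=\int_{B_{i,\varepsilon}}U'(L_\nu)\,d\nu-\nu(B_{i,\varepsilon})\bigl(S_i(\hat z_i)+O(\varepsilon)\bigr)\,\mathbb E^\nu U'(L_\nu).
\]
Since $\rho(\tilde\nu)-\rho(\nu)\le0$, this limit is $\le0$ when $\alpha>0$ (the range of part \ref{T optimality S1}) and $\ge0$ when $\alpha<0$ (part \ref{T optimality S2}); dividing by $\nu(B_{i,\varepsilon})>0$ (legitimate by \eqref{Eq z<x}, resp.\ \eqref{Eq z>0}) bounds the average $\nu(B_{i,\varepsilon})^{-1}\int_{B_{i,\varepsilon}}U'(L_\nu)\,d\nu$ from above, resp.\ below, by $\bigl(S_i(\hat z_i)+O(\varepsilon)\bigr)\mathbb E^\nu U'(L_\nu)$.

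Finally I let $\varepsilon\to0$. That average is the $\nu_{(X_i,Z_i)}$‑average of the conditional expectation $\mathbb E^\nu_{(X_{[i]},Z_{[i]})|X_i}U'(L_\nu)$ over the Euclidean disc of radius $\varepsilon$ centred at $(\hat x_i,\hat z_i)$ in $\mathbb R^2$, so by the Besicovitch differentiation theorem it converges to that conditional expectation for $\nu_{(X_i,Z_i)}$‑a.e.\ centre. To get the inequality at every $(\hat x_i,\hat z_i)$ satisfying \eqref{Eq z<x} (resp.\ \eqref{Eq z>0}) one notes that the average is at least the essential infimum (at most the essential supremum) of $\mathbb E^\nu_{(X_{[i]},Z_{[i]})|X_i}U'(L_\nu)$ over that disc, hence at least (at most) $\lfloor\,\cdot\,\rfloor$ (resp.\ $\lceil\,\cdot\,\rceil$) evaluated at the centre; letting $\varepsilon\to0$ yields \eqref{Eq optimility condition a>0} and \eqref{Eq optimility condition a<0}. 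For part \ref{T optimality S3} one drops {\bf (A2)}: if some admissible $(\hat x_i,\hat z_i)$ has $S_i(\hat z_i)<0$, then for small $\alpha>0$, $\varepsilon>0$ one has $\Delta P<0$, so $L_{\tilde\nu}>L_\nu$ everywhere; as $U$ is nondecreasing both integrands in the decomposition are $\ge0$ while their sum is $\le0$, forcing $U(L_\nu-\Delta P)=U(L_\nu)$ $\nu$‑a.e.; since a concave nondecreasing $U$ is strictly increasing on $]-\infty,x^\ast[$ with $x^\ast:=\sup\{t:U'(t)>0\}$ and $\Delta P\neq0$, this forces $L_\nu\ge x^\ast$, i.e.\ $U(L_\nu)=U(c)$, $\nu$‑a.e. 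For part \ref{T optimality S4} (again without {\bf (A2)}) one takes $S_i(\hat z_i)>0$, $\alpha<0$ (so $\Delta P<0$) and replaces the two‑sided expansion by the one‑sided concavity bound $U(y)-U(x)\ge U'(y)(y-x)$; integrating it in the two regions, using that $\int_{B_{i,\varepsilon}}U'(L_\nu+\alpha-\Delta P)\,d\nu<\infty$ for each fixed $\alpha\neq0$ (a chord estimate together with $\mathbb E^\nu U(L_\nu)>-\infty$), and letting $\alpha\to0^-$ with the monotone convergences $U'(L_\nu-\Delta P)\uparrow U'(L_\nu)$ and $U'(L_\nu+\alpha-\Delta P)\downarrow U'(L_\nu)$, one again obtains $\int_{B_{i,\varepsilon}}U'(L_\nu)\,d\nu\ge\nu(B_{i,\varepsilon})\bigl(S_i(\hat z_i)+O(\varepsilon)\bigr)\mathbb E^\nu U'(L_\nu)$ (both sides read as $+\infty$ when $\mathbb E^\nu U'(L_\nu)=\infty$, in which case the same computation forces $\lceil\mathbb E^\nu_{(X_{[i]},Z_{[i]})|X_i}U'(L_\nu)\rceil(\hat x_i,\hat z_i)=+\infty$); the $\varepsilon\to0$ step is then exactly as above.

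The delicate point in each part is this last step — upgrading an inequality valid only for the $\nu(B_{i,\varepsilon})$‑average into a pointwise inequality at each prescribed $(\hat x_i,\hat z_i)$ — which is where the differentiation theorem and the semicontinuous envelopes are indispensable, and it is entangled with the integrability bookkeeping: domination near the ceiling $c$ is harmless because $U'$ is nonincreasing and hence bounded there, but in parts \ref{T optimality S3}--\ref{T optimality S4} the possible blow‑up of $U'$ near $-\infty$ has to be absorbed using only {\bf (A1)} and $\mathbb E^\nu U(L_\nu)>-\infty$.
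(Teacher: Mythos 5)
Your proposal is correct and follows essentially the paper's own argument: the same perturbations $\tilde\nu=\nu^{i,\hat x_i,\hat z_i,\alpha,\varepsilon}$, the same use of Lemmas \ref{L perturbation}--\ref{L premium perturbation}, the same decomposition of $\rho(\tilde\nu)-\rho(\nu)$ into the $B_{i,\varepsilon}$ and complementary integrals, optimality combined with the {\bf (A2)}-based concavity domination for parts \ref{T optimality S1}--\ref{T optimality S2}, and the same monotonicity arguments for parts \ref{T optimality S3}--\ref{T optimality S4} (the paper likewise invokes conditional independence of $(X_{[i]},Z_{[i]})$ and $Z_i$ given $X_i$ at the Fubini step). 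The only deviations are cosmetic: the paper keeps $\alpha$ finite (integral mean-value form) and bounds the $B_{i,\varepsilon}$-term by an infimum of the conditional expectation over a slightly enlarged neighbourhood before taking successive limits $\varepsilon\to0^+$, $\alpha\to0^+$, whereas you pass to the $\alpha$-derivative first by dominated convergence (note the difference quotient $t\mapsto\bigl(U(L_\nu)-U(L_\nu-t)\bigr)/t$ is non\emph{de}creasing, which is what the domination actually uses); and in your $\varepsilon\to0$ step the Besicovitch theorem is unnecessary and by itself would only give $\nu_{(X_i,Z_i)}$-a.e.\ statements, while the assertion that the disc average dominates $\lfloor\cdot\rfloor$ at the centre for fixed $\varepsilon$ is not correct as written --- the valid (and intended, and the paper's) chain is $\inf_{B_{i,\varepsilon}}\le\mathrm{average}\le\bigl(S_i(\hat z_i)+O(\varepsilon)\bigr)\mathbb E^\nu U^\prime(L_\nu)$ followed by $\varepsilon\to0$, since the infima over shrinking discs converge to the semicontinuous envelope at the centre.
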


\begin{remark}
\label{Rmk 1}
From the definition of $\mathcal H_X$, it follows that if $\nu_{Z_i}$ is concentrated in a single point $a$, then $\Pr \{ X_i < a \} = 0 $.
To see this, notice that
$\nu_{Z_i} \{ a \}= 1$ implies
\begin{align*}
\Pr \{ X_i < a \} = &
\nu_{(X_i,Z_i)} \{ (x_i,z_i): x_i < a\} =
\nu_{(X_i,Z_i)} \{ (x_i,z_i): 0 \leq z_i \leq x_i < a\}
\\ \leq &
\nu_{(X_i,Z_i)} \{ (x_i,z_i): z_i < a\} = 0 .
\end{align*}
In particular, if $\Pr \{X_i<a\}>0$ holds for every $a>0$ and $\nu_{Z_i}$ is concentrated in a single point, then that point must be $0$.
\end{remark}

\begin{remark}
\label{Rmk 2}
If $U$ is a exponential utility function, then assumption {\bf (A2)} holds for any
$\nu \in \overline{\mathcal H}_X$
such that $\mathbb E^\nu U(L_\nu) > - \infty$.
Therefore, we may take assumption {\bf (A2)} as granted whenever there is some constant $R>0$ such that
\[
\liminf_{x \to - \infty} \left( e^{Rx} U(x) \right) > -\infty .
\]
\end{remark}

\begin{remark}
Notice that $\mathbb E^\nu_{\left. \left( X_{[i]}, Z_{[i]} \right) \right| X_i } U^\prime ( L_\nu ) (\hat x_i, \hat z_i)$ is the (infinitesimal) variation of the expected utility due to an (infinitesimal) variation in the cover of risk $i$ in the neighbourhood of $\hat x_i$, while $\sum\limits_{j=1}^{k_i} j \hat z_i^{j-1} \frac{\partial \Psi_i}{\partial u_j}$ is the corresponding variation in the premium for risk $i$.
Thus, conditions \eqref{Eq optimility condition a>0}, \eqref{Eq optimility condition a<0} express a very natural economic trade-off between the (local) effect of changing the cover for the event of a particular level of risk $i$ and the (global) effect on the expected utility due to the corresponding change in premium amount.
\end{remark}

\begin{proof}[Proof of Theorem \ref{T optimality condition relaxed}]
Notice that, for any $\nu \in \overline{ \mathcal H}_X$, $\alpha \in \mathbb R$
and
$\varepsilon \geq 0$:
\begin{align}
\mathbb E^{\tilde \nu} U\left( L_{\tilde \nu} \right) = & \notag
\mathbb E^{\tilde \nu} U\left( L_\nu - \left( P_i(\tilde \nu) - P_i(\nu) \right) \right) 
\\ = & \notag
\int_{B_{i,\varepsilon}} U\left( L_\nu - \left( P_i(\tilde \nu) - P_i(\nu) \right) + \alpha \right) d \nu +
\\ & + 
\label{Eq perturbed utility}
\int_{\mathbb R^{2n} \setminus B_{i,\varepsilon}} U\left( L_\nu - \left( P_i(\tilde \nu) - P_i(\nu) \right) \right) d \nu .
\end{align}

Let $\nu \in \overline{\mathcal H}_X$ be an optimal randomized strategy and suppose that assumption {\bf (A1)} holds.

Suppose there is some $(\hat x_i, \hat z_i)$ satisfying \eqref{Eq z<x} such that $\sum\limits_{j=1}^{k_i} j \hat z_i^{j-1} \frac{\partial \Psi_i}{\partial u_j} < 0$.
By Lemma \ref{L premium perturbation}, $P_i(\tilde \nu) - P_i(\nu) <0$ holds for sufficiently small $\varepsilon>0$, $\alpha >0$.
Therefore,
\begin{align*}
\left( L_\nu - \left( P_i(\tilde \nu) - P_i(\nu) \right) + \alpha \right) \chi_{B_{i,\varepsilon}} +
\left( L_\nu - \left( P_i(\tilde \nu) - P_i(\nu) \right) \right) \chi_{ B_{i,\varepsilon}^c} 
>
L_\nu.
\end{align*}
Thus, 
statement \ref{T optimality S3} must hold, due to optimality of $\nu$ and 
monotonicity of $U$.

Now, suppose that assumption {\bf (A2)} also holds, and fix $\delta >0$ such that $\mathbb E^\nu U \left( L_\nu - \delta \right) >- \infty$.
Due to Lemma \ref{L premium perturbation}, $P_i(\tilde \nu) - P_i(\nu) < \delta$ whenever $\varepsilon $ and $\alpha $ are sufficiently small.
Further, optimality  of $\nu$ and equality \eqref{Eq perturbed utility} imply
\begin{align}
& \notag
\mathbb E^{\tilde \nu}U(L_{\tilde \nu} ) \leq \mathbb E ^\nu U(L_\nu) \Leftrightarrow
\\ \Leftrightarrow & \notag
\int_{B_{i,\varepsilon}} 
U\left( L_\nu - \left( P_i(\tilde \nu) - P_i(\nu) \right) + \alpha \right) - U\left( L_\nu - \left( P_i(\tilde \nu) - P_i(\nu) \right) \right) d \nu \leq
\\ & \leq \notag
\mathbb E^\nu \left( U\left( L_\nu \right) - U\left( L_\nu - \left( P_i(\tilde \nu) - P_i(\nu) \right) \right) \right) 
\\ \Leftrightarrow & \notag
\alpha
\int_{B_{i,\varepsilon}} \int_0^1 U^\prime \left( L_\nu - \left( P_i(\tilde \nu) - P_i(\nu) \right) + t\alpha \right) 
dt \, d \nu \leq
\\ & \leq \label{Eq inequality optimality}
\left( P_i(\tilde \nu) - P_i(\nu) \right)
\mathbb E^\nu
\int_0^1 U^\prime \left( L_\nu - t\left( P_i(\tilde \nu) - P_i(\nu) \right) \right) dt 
.
\end{align}
If $(\hat x_i, \hat z_i)$ satisfy \eqref{Eq z<x} and $\alpha >0$, then dividing
both sides of \eqref{Eq inequality optimality} by $\alpha \nu(B_{i,\varepsilon})$, 
using Fubini's theorem and conditional independence of $\left( X_{[i]},Z_{[i]} \right)$ and $Z_i$ given $X_i$,
we obtain
\begin{align}
& \notag
\frac 1{\nu (B_{i,\varepsilon})}
\int_0^1
\int_{B_{i,\varepsilon}} \mathbb E^\nu_{\left. \left( X_{[i]},Z_{[i]} \right) \right| X_i}
U^\prime \left( L_\nu \right) \left( x_i, z_i - \left( P_i(\tilde \nu) - P_i(\nu) \right) + \alpha \right) d \nu \leq
\\ \leq & \label{Eq inequality a>0}
\frac{P_i(\tilde \nu) - P_i(\nu) }{\alpha \nu(B_{i,\varepsilon})}
\mathbb E^\nu \int_0^1 U^\prime \left( L_\nu - t\left( P_i(\tilde \nu) - P_i(\nu) \right) \right) d t .
\end{align}
This implies
\begin{align*}
& 
\inf_{|x_i-\hat x_i| + |z_i - \hat z_i | \leq  \varepsilon + \alpha + P_i(\tilde \nu)-P_i(\nu)} 
\mathbb E^\nu_{\left. \left( X_{[i]},Z_{[i]} \right) \right| X_i}
U^\prime \left( L_\nu \right) \left( x_i, z_i \right) \leq
\\ \leq & 
\frac{P_i(\tilde \nu) - P_i(\nu) }{\alpha \nu(B_{i,\varepsilon})}
\mathbb E^\nu \int_0^1 U^\prime \left( L_\nu - t\left( P_i(\tilde \nu) - P_i(\nu) \right) \right) d t .
\end{align*}
Therefore, taking successive limits when $\varepsilon \to 0^+$ and $\alpha \to 0^+$ 
and taking into account Lemma \ref{L premium perturbation},
one obtains \eqref{Eq optimility condition a>0}.

Under assumption {\bf (A2)}, inequality \eqref{Eq optimility condition a<0} can be proved by a similar argument. Just notice that for $\alpha <0$ inequality in \eqref{Eq inequality a>0} holds with the opposite inequality sign, and use supremum instead of infimum in the last step.

To prove statement \ref{T optimality S4}, notice that if $(\hat x_i, \hat z_i)$ satisfies \eqref{Eq z>0} and 
$\sum\limits_{j=1}^{k_i} j \hat z_i^{j-1} \frac{\partial \Psi_i}{\partial u_j} >0$,
then $P_i(\tilde \nu) - P_i(\nu) <0$ for $\varepsilon >0$ and $\alpha <0$ close to zero.
Hence, integrability of $U\left(L_\nu - \left( P_i(\tilde \nu) - P_i(\nu) \right) \right) $ follows from monotonicity of $U$, and therefore inequality \eqref{Eq optimility condition a<0} does not depend on assumption {\bf (A2)}.
\end{proof}

In Section \ref{S deterministic}, we will present conditions guaranteeing  that optimal treaties are deterministic.
In view of those results, we present the following version of Theorem 
\ref{T optimality condition relaxed}.

\begin{theorem}
\label{T optimality condition deterministic}
Suppose that $U$ is continuously differentiable in $]-\infty,c[$, and let $\nu \in \overline{\mathcal H}_X$ be an optimal randomized strategy.
Fix $i \in \{1,2, \ldots , n\}$ such that the function $\Psi_i$ is
continuous in $D_{k_i} \cup \left\{ u \in [0,+\infty[^{k_i}: u_1 = u_2^{\frac 1 2} = \ldots = u_{k_i}^{\frac 1{k_i}} \right\}$, continuously differentiable in $D_{k_i}$, and suppose that assumptions {\bf (A1)}, {\bf (A2)} of Theorem \ref{T optimality condition relaxed} hold.

If the marginal distributon $\mu_{X_i}$ is absolutely continuous except possibly for an atom at $x_i=0$ and the strategy prescribed by $\nu$ for the risk $i$ is deterministic (i.e., $Z_i = Z_i(X_i)$), then the optimal treaty $Z_i $ satisfies the conditions
\begin{align*}
&
\mathbb E^\nu_{\left. \left( X_{[i]}, Z_{[i]} \right) \right| X_i } U^\prime \left( L_\nu \right) \left( x_i,Z_i(x_i) \right) \leq 
\sum_{j=1}^{k_i} \frac{\partial \Psi_i}{\partial u_j} j Z_i(x_i)^{j-1} \mathbb E^\nu U^\prime \left( L_\nu \right) 
\\ & \hspace{40mm}
\text{for } \mu_{X_i}\text{-a.e. } x_i>0 \ \text{such that }Z_i(x_i) < x_i;
\\ &
\mathbb E^\nu_{\left. \left( X_{[i]}, Z_{[i]} \right) \right| X_i } U^\prime \left( L_\nu \right) \left( x_i, Z_i(x_i) \right) \geq 
 \sum_{j=1}^{k_i} \frac{\partial \Psi_i}{\partial u_j} j Z_i(x_i)^{j-1} \mathbb E^\nu U^\prime \left( L_\nu \right)
\\ & \hspace{40mm}
\text{for } \mu_{X_i}\text{-a.e. } x_i>0 \ \text{such that }Z_i(x_i) >0.
\end{align*}
The partial derivatives $\frac{\partial \Psi_i}{\partial u_j}$ are evaluated at the point 
\[
u = \left( \mathbb E Z_i(X_i), \mathbb E Z_i(X_i)^2, \ldots , \mathbb E Z_i(X_i)^{k_i} \right).
\]
\end{theorem}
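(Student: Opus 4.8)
The plan is to run the perturbation machinery behind Theorem \ref{T optimality condition relaxed} but to extract \emph{pointwise} information from the optimality inequality \eqref{Eq inequality optimality} by Lebesgue differentiation, rather than by passing to an infimum (which is what produces the semicontinuous envelopes in \eqref{Eq optimility condition a>0}--\eqref{Eq optimility condition a<0}). The key structural fact is that, since $Z_i=Z_i(X_i)$ is deterministic, the marginal $\nu_{(X_i,Z_i)}$ is the push-forward of $\mu_{X_i}$ under $x_i\mapsto(x_i,Z_i(x_i))$, so every integral of a function of $(x_i,z_i)$ over a cylinder collapses to an integral over the trace of that cylinder on the graph of $Z_i$, weighted by $\mu_{X_i}$, which on $]0,+\infty[$ is comparable to Lebesgue measure.

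First I would record the needed integrability. Under {\bf (A2)}, $-\infty<\mathbb E^\nu U(L_\nu-\delta)\le\mathbb E^\nu U(L_\nu)\le U(c)<+\infty$, so concavity gives $U'(L_\nu)\le\delta^{-1}(U(L_\nu)-U(L_\nu-\delta))$; hence $U'(L_\nu-\delta')\in L^1(\nu)$ for $0\le\delta'<\delta$, the function $h(x_i):=\mathbb E^\nu_{(X_{[i]},Z_{[i]})|X_i}U'(L_\nu)(x_i,Z_i(x_i))$ lies in $L^1(\mu_{X_i})$ with $\int h\,d\mu_{X_i}=\mathbb E^\nu U'(L_\nu)<+\infty$, and, writing $g(x_i,z_i):=\mathbb E^\nu_{(X_{[i]},Z_{[i]})|X_i}U'(L_\nu)(x_i,z_i)$, the map $z_i\mapsto g(x_i,z_i)$ is, for $\mu_{X_i}$-a.e.\ $x_i$, continuous and nonincreasing (because $U'$ is continuous and nonincreasing and $z_i\mapsto L_\nu$ is affine increasing).

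Next I would repeat the proof of Theorem \ref{T optimality condition relaxed} up to the optimality inequality \eqref{Eq inequality optimality}, but with the cylinder $R_{\varepsilon,\beta}$ over the rectangle $\{|x_i-\hat x_i|\le\varepsilon\}\times\{|z_i-\hat z_i|\le\beta\}$ in place of the ball $B_{i,\varepsilon}$; Lemmas \ref{L perturbation}--\ref{L premium perturbation} and \eqref{Eq inequality optimality} hold by the same arguments, with $O(\varepsilon)$ replaced by $O(\beta)$ (the error in the moment and premium perturbations comes only from the oscillation of $z_i$, now bounded by $\beta$). Taking $\hat z_i=Z_i(\hat x_i)$ and dividing by $\alpha\,\nu(R_{\varepsilon,\beta})=\alpha\,\mu_{X_i}(I_{\varepsilon,\beta})$ with $I_{\varepsilon,\beta}:=\{x_i:|x_i-\hat x_i|\le\varepsilon,\ |Z_i(x_i)-Z_i(\hat x_i)|\le\beta\}$, the optimality inequality for $\alpha>0$ becomes
\begin{align*}
&\frac1{\mu_{X_i}(I_{\varepsilon,\beta})}\int_{I_{\varepsilon,\beta}}\int_0^1 g\bigl(x_i,Z_i(x_i)-(P_i(\tilde\nu)-P_i(\nu))+t\alpha\bigr)\,dt\,d\mu_{X_i}(x_i)\le
\\ &\qquad\Bigl(\sum_{j=1}^{k_i}j\hat z_i^{\,j-1}\frac{\partial\Psi_i}{\partial u_j}+O(|\alpha|)+O(\beta)\Bigr)\,\mathbb E^\nu\!\int_0^1 U'\bigl(L_\nu-t(P_i(\tilde\nu)-P_i(\nu))\bigr)\,dt .
\end{align*}
I then let $\varepsilon\to0^+$, then $\beta\to0^+$, then $\alpha\to0^+$, along countable sequences. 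Because $Z_i$ is approximately continuous at $\mu_{X_i}$-a.e.\ $\hat x_i$ and $\mu_{X_i}$ has a positive density there, the sets $I_{\varepsilon,\beta}$ (for fixed $\beta$) shrink nicely to $\hat x_i$ as $\varepsilon\to0^+$; and since $P_i(\tilde\nu)-P_i(\nu)=O(\alpha\,\mu_{X_i}(I_{\varepsilon,\beta}))\to0$ and $g$ is nonincreasing in its second argument, for small $\varepsilon$ the left integrand is bounded below by the \emph{fixed} $L^1(\mu_{X_i})$ function $x_i\mapsto g(x_i,Z_i(x_i)+\tfrac32\alpha)$, so Lebesgue's differentiation theorem gives $\liminf_{\varepsilon\to0^+}(\text{LHS})\ge g(\hat x_i,Z_i(\hat x_i)+\tfrac32\alpha)$ for $\mu_{X_i}$-a.e.\ $\hat x_i$, while the right-hand side tends to $\bigl(\sum_j j\hat z_i^{\,j-1}\frac{\partial\Psi_i}{\partial u_j}+O(|\alpha|)+O(\beta)\bigr)\mathbb E^\nu U'(L_\nu)$ by Lemma \ref{L premium perturbation} and dominated convergence. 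Sending $\beta\to0$ and then $\alpha\to0^+$ (using continuity of $g(\hat x_i,\cdot)$) yields $h(\hat x_i)\le\sum_{j}j Z_i(\hat x_i)^{j-1}\frac{\partial\Psi_i}{\partial u_j}\,\mathbb E^\nu U'(L_\nu)$ at $\mu_{X_i}$-a.e.\ $\hat x_i>0$ with $Z_i(\hat x_i)<\hat x_i$ (for $\mu_{X_i}$-a.e.\ such $\hat x_i$, condition \eqref{Eq z<x} holds since a.e.\ point lies in $\operatorname{supp}\nu_{(X_i,Z_i)}$). The second inequality is obtained in the same way from the $\alpha<0$ branch of Theorem \ref{T optimality condition relaxed} (statements \ref{T optimality S2}/\ref{T optimality S4}): the inequality in \eqref{Eq inequality optimality} reverses, the left integrand is now bounded \emph{above} by $x_i\mapsto g(x_i,Z_i(x_i)+\tfrac32\alpha)$, and one restricts to $\hat x_i$ with $Z_i(\hat x_i)>0$.

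The main obstacle is exactly this Lebesgue differentiation step, and the reason the balls $B_{i,\varepsilon}$ of Theorem \ref{T optimality condition relaxed} must be replaced by thin rectangles with the $x_i$-width sent to zero \emph{first}: for a generic measurable $Z_i$ (e.g.\ a nowhere differentiable one) the trace of a ball on the graph of $Z_i$ need not shrink nicely to a.e.\ point, whereas the trace of a rectangle of fixed $z_i$-height does, thanks to approximate continuity of $Z_i$. The rest is routine bookkeeping: selecting a single $\mu_{X_i}$-null exceptional set that simultaneously handles the Lebesgue points of $h$ and of the functions $x_i\mapsto g(x_i,Z_i(x_i)+\tfrac32\alpha_m)$, the density points of $\mu_{X_i}$ and the approximate-continuity points of $Z_i$, the support condition \eqref{Eq z<x}, and the countable families of parameters $\alpha_m,\beta_\ell$.
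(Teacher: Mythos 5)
Your argument is correct in substance, and it shares the paper's overall skeleton (a perturbation localized near $\hat x_i$, the optimality inequality, division by the mass of the perturbed region, and a Lebesgue-differentiation passage to pointwise a.e.\ conditions), but the execution is genuinely different. The paper does not reuse the translation perturbation of Theorem \ref{T optimality condition relaxed}: it perturbs by \emph{resetting} the treaty to a constant rational level $\zeta$ on a thin $x$-strip $]\hat x_i-\varepsilon,\hat x_i+\varepsilon[$, computes the premium increment $P_i(\tilde\nu)-P_i(\nu)\approx 2\varepsilon(\zeta-Z_i(\hat x_i))\sum_j\frac{\partial\Psi_i}{\partial u_j}jZ_i(\hat x_i)^{j-1}f_{X_i}(\hat x_i)$ at Lebesgue points of the functions $Z_i^jf_{X_i}$, inserts an auxiliary shift $\eta>0$ inside $U^\prime$ to secure integrability, divides by $\varepsilon$ and differentiates, then sends $\zeta\to Z_i(\hat x_i)^{\pm}$ along rationals and finally $\eta\to0$ by dominated convergence. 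You instead keep the measure-translation perturbation of Section \ref{S optimality conds}, supported on thin rectangles rather than balls, divide by $\alpha\,\mu_{X_i}(I_{\varepsilon,\beta})$, and differentiate along the trace sets $I_{\varepsilon,\beta}$, which forces you to invoke approximate continuity of $Z_i$ (so the traces shrink nicely) and to order the three limits $\varepsilon\to0$, $\beta\to0$, $\alpha\to0^{\pm}$, with the monotonicity bound $g(x_i,Z_i(x_i)+\tfrac32\alpha)$ playing the role of the paper's $\eta$-device; note that your parenthetical appeal to statements \ref{T optimality S2}/\ref{T optimality S4} is only heuristic (their conclusions involve envelopes), but the rerun of the $\alpha<0$ branch you describe is the right mechanism, and the conditional-independence step is unproblematic here precisely because $Z_i=Z_i(X_i)$ makes conditioning on $(X_i,Z_i)$ the same as conditioning on $X_i$. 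What each route buys: the paper's strip perturbation is insensitive to the local oscillation of $Z_i$ (only Lebesgue-point properties of $Z_i^jf_{X_i}$ and of the conditional-expectation composites are needed) and yields the premium increment in one step; your route recycles Lemmas \ref{L perturbation}--\ref{L premium perturbation} almost verbatim (with $O(\varepsilon)$ becoming $O(\beta)$) and makes transparent why the semicontinuous envelopes of Theorem \ref{T optimality condition relaxed} disappear in the deterministic case, at the price of the approximate-continuity and nicely-shrinking-sets bookkeeping you acknowledge at the end.
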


\begin{proof}
The proof follows an argument similar to the proof of Theorem \ref{T optimality condition relaxed}, with some adaptations.

The absolute continuity assumption on the marginal distribution $\mu_{X_i}$ means that there is a constant $\alpha \in [0,1[$ and a non-negative function $f_{X_i}$ such that
\[
\mu_{X_i}(A) = \alpha \delta_0(A) + \int_A f_{X_i}(x_i) dx_i \qquad \forall A \in \mathcal B_{[0,+\infty[},
\]
where $\delta_0$ denotes the Dirac measure concentrated at $x_i=0$.

Let $Z_i$ be the optimal treaty for risk $i$, and 
$\nu \in \overline{\mathcal H}_X$ 
be the optimal strategy for the total portfolio of risks.
Thus,
\begin{align*}
\nu(A) = \int_{\mathbb R} \int_{\mathbb R^{2n-2}} \chi_A\left( x,z_{[i]},Z_i(x_i) \right) \nu_{\left. \left( X_{[i]}, Z_{[i]} \right) \right| X_i }\left( x_i, d(x_{[i]},z_{[i]}) \right) \mu_{X_i}(dx_i)
\end{align*}
for any $A \in \mathcal B_{\mathbb R^{2n}}$.
For fixed $\hat x_i \in ]0,+\infty[$, $\varepsilon >0$, $\zeta \in \mathbb Q \cap [0, \hat x_i[$, let
\[
\tilde Z_i(x_i) = \left\{ \begin{array}{ll}
Z_i(x_i),& \text{for } x_i \notin ]\hat x_i - \varepsilon, \hat x_i + \varepsilon [, \\
\zeta,& \text{for } x_i \in ]\hat x_i - \varepsilon, \hat x_i + \varepsilon [, 
\end{array} \right.
\]
and let $\tilde \nu$ be the corresponding measure in $\mathcal B_{\mathbb R^{2n}}$.
The argument used to prove Lemma \ref{L perturbation} shows that $\tilde \nu \in \overline{\mathcal H}_X$, provided $\varepsilon$ is sufficiently small.

An argument similar to the proof of Lemma \ref{L premium perturbation} shows that
\begin{align*}
P_i(\tilde \nu) - P_i(\nu) = &
2 \varepsilon \left(\zeta - Z_i(\hat x_i) \right) \sum_{j=1}^{k_i} \frac{\partial \Psi_i}{\partial u_j} j Z_i(\hat x_i)^{j-1} f_{X_i}(\hat x_i) +
\varepsilon o \left( \zeta - Z_i(\hat x_i) \right) + o(\varepsilon ),
\end{align*}
for every $\hat x_i$, a Lebesgue point of the functions $Z_i^j f_{X_i}$, $j=0,1, \ldots, k_i$.
Thus,
\begin{align*}
&
\mathbb E^{\tilde \nu} U\left( L_{\tilde \nu} \right) =
\int_{\mathbb R \setminus ]\hat x_i - \varepsilon, \hat x_i + \varepsilon[} \int_{\mathbb R^{2n-2}} U \left( L_\nu - \left( P_i(\tilde \nu ) - P_i(\nu) \right) \right) d \nu_{ \left. \left( X_{[i]},Z_{[i]} \right) \right| X_i} d\mu_{X_i} +
\\ & +
\int_{\hat x_i - \varepsilon}^{\hat x_i + \varepsilon} \int_{\mathbb R^{2n-2}} U \left( L_\nu - \left( P_i(\tilde \nu ) - P_i(\nu) \right) + \zeta - Z_i \right) d\nu_{ \left. \left( X_{[i]},Z_{[i]} \right) \right| X_i} d\mu_{X_i} 
\\ = &
\mathbb E^\nu U \left( L_\nu - \left( P_i(\tilde \nu ) - P_i(\nu) \right) \right) +
\\ & +
\int_{\hat x_i - \varepsilon}^{\hat x_i + \varepsilon} \int_{\mathbb R^{2n-2}} \left( U \left( L_\nu - \left( P_i(\tilde \nu ) - P_i(\nu) \right) + \zeta - Z_i \right) - U \left( L_\nu - \left( P_i(\tilde \nu ) - P_i(\nu) \right) \right) \right)
\\ & \hspace{30mm} 
d \nu_{ \left. \left( X_{[i]},Z_{[i]} \right) \right| X_i} d\mu_{X_i} .
\end{align*}
Optimality of $\nu$ implies that $\mathbb E^{\tilde \nu} U\left( L_{\tilde \nu} \right) \leq \mathbb E^\nu U \left( L_\nu \right)$, that is
\begin{align*}
&
\int_{\hat x_i - \varepsilon}^{\hat x_i + \varepsilon} \int_{\mathbb R^{2n-2}} \left( U \left( L_\nu - \left( P_i(\tilde \nu ) - P_i(\nu) \right) + \zeta - Z_i \right) - U \left( L_\nu - \left( P_i(\tilde \nu ) - P_i(\nu) \right) \right) \right)
\\ & \hspace{30mm} 
d \nu_{ \left. \left( X_{[i]},Z_{[i]} \right) \right| X_i} d\mu_{X_i} \leq 
\\ \leq &
\mathbb E^\nu \left( U \left( L_\nu - \left( P_i(\tilde \nu) - P_i(\nu) \right) \right) -
U \left( L_\nu \right) \right) .
\end{align*}
Since $U$ is concave and $P_i(\tilde \nu) \to P_i(\nu)$ when $\varepsilon \to 0$, it follows that for any constant $\eta\in \mathbb Q \cap ]0,+\infty[$, the inequality
\begin{align*}
&
\int_{\hat x_i - \varepsilon}^{\hat x_i + \varepsilon} \mathbb E^\nu_{ \left. \left( X_{[i]},Z_{[i]} \right) \right| X_i} \int_0^1 U^\prime \left( L_\nu + \eta + t(\zeta - Z_i) \right) dt \left( \zeta - Z_i \right) d\mu_{X_i} \leq 
\\ \leq &
\left( P_i(\tilde \nu) - P_i(\nu) \right) \mathbb E^\nu \int_0^1 U^\prime \left( L_\nu - t\left( P_i(\tilde \nu) - P_i(\nu) \right) \right) dt 
\end{align*}
holds for every sufficiently small $\varepsilon >0$.
Dividing both sides by $\varepsilon$, making $\varepsilon \to 0$, and using Lebesgue's dominated convergence theorem, we obtain
\begin{align*}
&
\mathbb E^\nu_{ \left. \left( X_{[i]},Z_{[i]} \right) \right| X_i} \int_0^1 U^\prime \left( L_\nu + \eta + t(\zeta - Z_i(\hat x_i)) \right) dt \left( \zeta - Z_i(\hat x_i) \right) \leq
\\ \leq &
(\zeta - Z_i(\hat x_i)) \sum_{j=1}^{k_i} \frac{\partial \Psi_i}{\partial u_j}j Z_i(\hat x_i)^{j-1} \mathbb E^\nu U^\prime \left( L_\nu \right) +
o \left( \zeta - Z_i(\hat x_i) \right) ,
\end{align*}
for every $\hat x_i>0$, a Lebesgue point of the functions $Z_i^j f_{X_i}$, $j=0,1,\ldots, k_i$ and 
\linebreak
$\mathbb E^\nu_{ \left. \left( X_{[i]},Z_{[i]} \right) \right| X_i} \int_0^1 U^\prime \left( L_\nu + \eta + t(\zeta - Z_i) \right) dt \left( \zeta - Z_i \right) f_{X_i}$, such that $f_{X_i}(\hat x_i)>0$.

If $Z_i(\hat x_i) < \hat x_i$ (resp., $Z_i(\hat x_i)>0$), then we can pick a sequence $\zeta_k \searrow Z_i(\hat x_i)$ (resp., $\zeta_k \nearrow Z_i(\hat x_i)$) and obtain
\begin{align*}
&
\mathbb E^\nu_{ \left. \left( X_{[i]},Z_{[i]} \right) \right| X_i} U^\prime \left( L_\nu + \eta \right) \leq
\sum_{j=1}^{k_i} \frac{\partial \Psi_i}{\partial u_j}j Z_i(\hat x_i)^{j-1} \mathbb E^\nu U^\prime \left( L_\nu \right) 
\end{align*}
(resp., $\mathbb E^\nu_{ \left. \left( X_{[i]},Z_{[i]} \right) \right| X_i} U^\prime \left( L_\nu + \eta \right) \leq
\sum\limits_{j=1}^{k_i} \frac{\partial \Psi_i}{\partial u_j}j Z_i(\hat x_i)^{j-1} \mathbb E^\nu U^\prime \left( L_\nu \right) $).
Since $\eta $ is arbitrary, the result follows by Lebesgue's dominated convergence theorem.
\end{proof}

\begin{remark}
\label{Rmk Centeno08}
If the utility function is exponential and $X_i$ is independent of $X_{[i]}$ (or $n=1$), then the optimality conditions in Theorem \ref{T optimality condition deterministic} coincide with the conditions in
\linebreak
\cite[Theorem 3]{GC08}.
\end{remark}

\section{Deterministic optimal treaties}\label{S deterministic}

In this section we show that under mild conditions the optimal treaty is of the deterministic type
(Subsection \ref{SS deterministic solutions}).
The remaining of this section contains some particular consequences of these results. In Subsections \ref{SS expected value} and \ref{SS variance-related}, we show that some important premium calculation principles (the expected value and variance-related principles, repectively) satisfy the conditions above and provide the particular form of Theorem \ref{T optimality condition deterministic} for these principles.
In Subsection \ref{SS CaiWei}, we compare our results with known results for the expected value principle \cite{CW12}.

\subsection{Problems with deterministic optimal strategies}
\label{SS deterministic solutions}

The main result in this section is the following theorem.

\begin{theorem}
\label{T deterministic treaties}
Suppose that $U$ is differentiable in $]-\infty,c[$, let $\nu \in \overline{\mathcal H}_X$ be an optimal randomized strategy, and suppose that assumption {\bf (A2)} of Theorem 
\ref{T optimality condition relaxed} holds.
Pick $i \in \{1,2, \ldots , n\}$ such that the marginal distribution $\mu_{X_i}$ has no atoms except possibly at $x_i=0$, and the function $\Psi_i$ is continuous in $D_{k_i} \cup \left\{ u \in [0,+\infty[^{k_i}: u_1 = u_2^{\frac 12} = \ldots = u_{k_i}^{\frac 1{k_i}} \right\}$, continuously differentiable in $D_{k_i}$.
Let $\varphi_{i,\nu}:[0,+\infty[ \mapsto \mathbb R$ be the function
\[
\varphi_{i,\nu} (t) =
\mathbb E^\nu U^\prime \left(L_\nu \right) \sum_{j=1}^{k_i} \frac{\partial \Psi_i}{\partial u_j}\left( \mathbb E^\nu Z_i, \mathbb E^\nu Z_i^2, \ldots, \mathbb E^\nu Z_i^{k_i} \right) t^j.
\]

If the functions $U$, $-\varphi_{i,\nu}$ are concave, with at least one of them being strictly concave, then the strategy prescribed by $\nu$ for the risk $i$ is deterministic.
\end{theorem}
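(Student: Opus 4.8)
The plan is to argue by contradiction, adapting the interval perturbation from the proof of Theorem~\ref{T optimality condition deterministic} to the randomized setting. Suppose the strategy prescribed by $\nu$ for risk $i$ is not deterministic, i.e.\ the regular conditional law $\kappa_{x_i}:=\nu_{Z_i|X_i}(x_i,\cdot)$ fails to be a Dirac mass on a set $E$ with $\mu_{X_i}(E)>0$; since $\mu_{X_i}$ has no atom off $0$ and $0\le Z_i\le X_i$, we may take $E\subseteq\operatorname{supp}(\mu_{X_i})\cap\,]0,+\infty[$. Discarding a $\mu_{X_i}$-null set, I also assume each $\hat x_i\in E$ is a Lebesgue point with respect to $\mu_{X_i}$ (using Besicovitch differentiation, as $\mu_{X_i}$ need not be absolutely continuous) of the maps $x_i\mapsto\mathbb E^\nu[Z_i^{\,j}\mid X_i=x_i]$, $j\le k_i$, and $x_i\mapsto\mathbb E^\nu[U(L_\nu-Z_i+\zeta)\mid X_i=x_i]$ for each rational $\zeta\ge0$, at which all these conditional expectations and $\mathbb E^\nu[U(L_\nu)\mid X_i=x_i]$ are finite (finiteness coming from $\mathbb E X_i^{k_i}<\infty$ and $\rho(\nu)>-\infty$). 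Fix such an $\hat x_i$, a rational $\zeta\in[0,\hat x_i[$, and $\varepsilon>0$ small, and let $\tilde\nu$ be $\nu$ with $\kappa_{x_i}$ replaced by $\delta_\zeta$ for $x_i\in I_\varepsilon:=\,]\hat x_i-\varepsilon,\hat x_i+\varepsilon[$, leaving $\nu_X=\mu_X$ and the laws $\nu_{(X_{[i]},Z_{[i]})|X_i}$ untouched. By the argument proving Lemma~\ref{L perturbation}, $\tilde\nu\in\overline{\mathcal H}_X$ for $\varepsilon$ small: on $\{X_i\in I_\varepsilon\}$ the coordinate $Z_i$ is now constant, hence trivially conditionally independent of $(X_{[i]},Z_{[i]})$ given $X_i$, so $\tilde\nu\in\mathcal H_X$ whenever $\nu$ is, and one approximates otherwise.

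Next I run the first-order expansion of $\rho(\tilde\nu)-\rho(\nu)$ as in the proof of Theorem~\ref{T optimality condition deterministic}. Using $\mathbb E X_i^{k_i}<\infty$ and the Lebesgue-point property, $\mathbb E^{\tilde\nu}Z_i^{\,j}-\mathbb E^\nu Z_i^{\,j}=\mu_{X_i}(I_\varepsilon)\big(\zeta^j-\mathbb E^\nu[Z_i^{\,j}\mid X_i=\hat x_i]\big)+o(\mu_{X_i}(I_\varepsilon))$, and since $Z_i$ is non-degenerate, $\big(\mathbb E^\nu Z_i,\dots,\mathbb E^\nu Z_i^{\,k_i}\big)\in D_{k_i}$ by Lemma~\ref{L location of moments}, where $\Psi_i$ is $C^1$, so $P_i(\tilde\nu)-P_i(\nu)=\mu_{X_i}(I_\varepsilon)\sum_{j=1}^{k_i}\frac{\partial\Psi_i}{\partial u_j}\big(\zeta^j-\mathbb E^\nu[Z_i^{\,j}\mid X_i=\hat x_i]\big)+o(\mu_{X_i}(I_\varepsilon))$. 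Splitting $\rho(\tilde\nu)$ over $\{X_i\in I_\varepsilon\}$ (where $L_\nu$ with $Z_i$ replaced by $\zeta$ equals $L_\nu-Z_i+\zeta$ and the conditional law of $(X_{[i]},Z_{[i]})$ given $X_i$ is as under $\nu$), using optimality $\rho(\tilde\nu)\le\rho(\nu)$, dividing by $\mu_{X_i}(I_\varepsilon)$, letting $\varepsilon\to0^+$ — the limit in the premium perturbation licensed, as in Theorem~\ref{T optimality condition deterministic}, by assumption {\bf (A2)} plus an auxiliary rational shift $\eta>0$ inside $U$ that is removed at the end by dominated convergence — and invoking conditional independence of $Z_i$ and $(X_{[i]},Z_{[i]})$ given $X_i$ (together with $\mathbb E^\nu U'(L_\nu)<\infty$), I obtain
\begin{equation*}
H(\zeta)\ \le\ \mathbb E^\nu\!\big[\,H(Z_i)\mid X_i=\hat x_i\,\big],\qquad H(z):=\mathbb E^\nu\!\big[\,U(L_\nu-Z_i+z)\mid X_i=\hat x_i\,\big]-\varphi_{i,\nu}(z).
\end{equation*}
By continuity of $U$ and of the polynomial $\varphi_{i,\nu}$ this extends to all $\zeta\in[0,\hat x_i[$.

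To finish, note that in $H(z)$ the argument $L_\nu-Z_i+z$ is affine in $z$ with slope exactly $1$ and free of $Z_i$, so $z\mapsto\mathbb E^\nu[U(L_\nu-Z_i+z)\mid X_i=\hat x_i]$ is concave, and strictly concave if $U$ is; combined with concavity of $-\varphi_{i,\nu}$, $H$ is concave, and strictly concave since by hypothesis at least one of $U$, $-\varphi_{i,\nu}$ is. Put $\beta:=\mathbb E^\nu[Z_i\mid X_i=\hat x_i]\in[0,\hat x_i]$. If $\beta=\hat x_i$ then $Z_i=\hat x_i$ a.s.\ given $X_i=\hat x_i$, contradicting non-degeneracy; hence $\beta<\hat x_i$ and $\zeta=\beta$ is admissible above. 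Since $\mathbb E^\nu[H(Z_i)\mid X_i=\hat x_i]$ is finite, $\kappa_{\hat x_i}$ is carried by the convex set $\{H>-\infty\}$ and its mean $\beta$ lies in the relative interior of that set, where $H$ is finite and continuous; so Jensen's inequality gives $\mathbb E^\nu[H(Z_i)\mid X_i=\hat x_i]<H(\beta)$ strictly, contradicting the displayed inequality at $\zeta=\beta$. Therefore $\mu_{X_i}(E)=0$ and the strategy $\nu$ prescribes for risk $i$ is deterministic.

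The concavity endgame is short; the real work, and the main obstacle, lies in the perturbation bookkeeping: confirming $\tilde\nu\in\overline{\mathcal H}_X$, and making rigorous the first-order expansion of $\rho(\tilde\nu)$ when $U$ is unbounded below — the differentiation under the expectation in the premium perturbation (handled via {\bf (A2)} and the $\eta$-regularization as in the proof of Theorem~\ref{T optimality condition deterministic}) and the differentiation of conditional expectations against the possibly singular marginal $\mu_{X_i}$.
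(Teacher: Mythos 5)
Your route is genuinely different from the paper's: you collapse the whole conditional law $\nu_{Z_i|X_i}(x_i,\cdot)$ to a Dirac at a point $\zeta$ on a thin slab $\{X_i\in I_\varepsilon\}$, derive a pointwise first-order condition $H(\zeta)\le\mathbb E^\nu[H(Z_i)\mid X_i=\hat x_i]$, and conclude by strict Jensen at the conditional mean $\beta$; the paper instead transfers a \emph{small} amount of mass ($+\alpha$ near $(\hat x_i,\hat z_i)$, $-\beta$ near $(\hat x_i,\tilde z_i)$, balanced so that $\alpha\nu(\hat A_\varepsilon)=\beta\nu(\tilde A_\varepsilon)$) and compares the two conditional expectations of $U^\prime$ and the difference quotients of $\varphi_{i,\nu}$ directly. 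Your concavity/Jensen endgame is correct (including the observation that $H(\beta)>-\infty$ follows from concavity of $H$ and finiteness of $\mathbb E[H(Z_i)\mid X_i=\hat x_i]$, and that non-determinism forces $\nu_{Z_i}$ to be non-degenerate so the moment vector lies in $D_{k_i}$), and the feasibility of the perturbation is at the same level of rigor as the paper's own appeals to Lemma \ref{L perturbation}.

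The genuine gap is in the passage to the limit that produces $H(\zeta)\le\mathbb E^\nu[H(Z_i)\mid X_i=\hat x_i]$. Your perturbation changes the wealth on the slab by $\zeta-Z_i$, which is \emph{not} infinitesimal: it can be of size up to $\hat x_i$ whenever $\kappa_{x_i}$ charges values well above $\zeta$. Assumption {\bf (A2)} only guarantees integrability of $U$ (and hence of $U^\prime$) at wealth shifted down by a fixed small $\delta$; it does not give finiteness of $\mathbb E[U(L_\nu-Z_i+\zeta)\mid X_i=x_i]$ for the nearby $x_i$ over which you average, so your parenthetical claim that this finiteness follows from $\mathbb E X_i^{k_i}<\infty$ and $\rho(\nu)>-\infty$ is unjustified (it holds at $x_i$ only if, e.g., $\kappa_{x_i}([0,\zeta])>0$, by comparing with the unperturbed utility on the lower part of the conditional law). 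Consequently the pre-limit slab average can be $-\infty$, or fail the domination needed for the Besicovitch/dominated-convergence step, for a positive-measure set of $x_i$ in every neighbourhood of $\hat x_i$, and optimality then yields no information. The appeal ``as in Theorem \ref{T optimality condition deterministic}'' does not cover this, because that proof only ever needs $\zeta$ infinitesimally close to $Z_i(\hat x_i)$, so all its wealth shifts stay within the {\bf (A2)} budget — exactly the feature the paper's two-cylinder proof of this theorem is designed to preserve (all evaluations are at $L_\nu+\alpha$, $L_\nu-\beta$, $L_\nu+O(\cdot)$ with $\alpha,\beta$ arbitrarily small). The gap looks repairable — e.g.\ restrict the collapse to the density-one set of $x_i$ with $\kappa_{x_i}([0,q])\ge\epsilon$ for rationals $q<\zeta$, which restores an integrable lower bound proportional to $\mathbb E[\min(U(L_\nu),0)\mid X_i=x_i]$ — but as written the key inequality is not established under the stated hypotheses.
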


\begin{proof}
Pick $\nu \in \overline{\mathcal H}_X$, and suppose that the strategy it prescribes for risk $i$ is not deterministic.
Then, the marginal distribution $\nu_{Z_i}$ is not concentrated at a single point and there are $\hat x_i, \hat z_i, \tilde z_i$, with $0 \leq \hat z_i < \tilde z_i \leq \hat x_i$, such that
\[
\nu\left(B_{i,\varepsilon}(\hat x_i, \hat z_i) \right) >0,
\quad 
\nu\left(B_{i,\varepsilon}(\hat x_i, \tilde z_i) \right) >0,
\qquad 
\forall \varepsilon >0 .
\]
Fix $\hat x_i, \hat z_i, \tilde z_i$ as above and pick constants $\alpha, \beta >0$ such that $\alpha + \beta < \tilde z_i - \hat z_i$, and therefore
\[
0 \leq \hat z_i < \hat z_i + \alpha < \tilde z_i - \beta < \tilde z_i \leq \hat x_i.
\]
Let $\hat A, \tilde A $ be open cylinders
\[
\hat A = \left\{ (x,z ) \in \mathbb R^{2n} : (x_i,z_i) \in \hat A_i \right\},
\qquad
\tilde A = \left\{ (x,z ) \in \mathbb R^{2n} : (x_i,z_i) \in \tilde A_i \right\},
\]
with
\begin{align*}
&
\hat A_i \subset \left\{ (x_i, z_i) \in \mathbb R^2: (x_i- \hat x_i)^2 + (z_i - \hat z_i)^2 < \varepsilon^2 \right\}
\\ &
\tilde A_i \subset \left\{ (x_i, z_i) \in \mathbb R^2: (x_i- \hat x_i)^2 + (z_i - \tilde z_i)^2 < \varepsilon^2 \right\} ,
\end{align*}
and let $\tilde \nu$ be the measure
\begin{align*}
\tilde \nu (B) = &
\nu \left( B \setminus (\hat A \cup \tilde A) \right) +
\nu \left\{ (x,z) \in \hat A: (x,z + \alpha e_i) \in B \right\} +
\\ & +
\nu \left\{ (x,z) \in \tilde A: (x,z - \beta e_i) \in B \right\} .
\end{align*}
The argument used to prove Lemma \ref{L perturbation} shows that,
for $\varepsilon$ sufficiently small, $\tilde \nu$ is an element of 
$\overline{\mathcal H}_X$.
The argument used to prove Lemma \ref{L perturbation of moments} shows that
\begin{align*}
&
\left( \mathbb E^{\tilde \nu} - \mathbb E^\nu \right)Z_i^k =
\nu( \hat A) \left( (\hat z_i + \alpha)^k - \hat z_i^k + O(\varepsilon) \right) +
\nu( \tilde A) \left( (\tilde z_i - \beta)^k - \tilde z_i^k + O(\varepsilon) \right),
\end{align*}
for any $k \in \mathbb N$ such that $\mathbb EZ_i^k < + \infty$.
Therefore, the argument used to prove Lemma \ref{L premium perturbation} shows that
\begin{align}
P_i(\tilde \nu ) - P_i(\nu) = & \notag
\nu( \hat A) \sum_{j=1}^{k_i} \frac{\partial \Psi_i}{\partial u_j} \times \left( (\hat z_i + \alpha)^j - \hat z_i^j + O(\varepsilon) \right) +
\\ & + \notag
\nu( \tilde A) \sum_{j=1}^{k_i} \frac{\partial \Psi_i}{\partial u_j} \times \left( (\tilde z_i - \beta)^j - \tilde z_i^j + O(\varepsilon) \right) +
\\ & + \label{Z7}
o \left( \nu (\hat A) + \nu(\tilde A) \right) ,
\end{align}
where the partial derivatives $\frac{\partial \Psi_i}{\partial u_j}$ are evaluated at the point $u = \left( \mathbb E^\nu Z_i,\mathbb E^\nu Z_i^2, \ldots , \mathbb E^\nu Z_i^{k_i} \right)$.
It follows that
\begin{align*}
&
\mathbb E^{\tilde \nu} U\left( L_{\tilde \nu} \right) - \mathbb E^\nu U \left( L_\nu \right) =
\\ = &
\int_{\hat A} U \left( L_\nu - (P_i(\tilde \nu) - P_i(\nu)) + \alpha \right) d\nu +
\int_{\tilde A} U \left( L_\nu - (P_i(\tilde \nu) - P_i(\nu) ) - \beta \right) d\nu +
\\ & +
\int_{\mathbb R^{2n} \setminus (\hat A \cup \tilde A)} U \left( L_\nu - (P_i(\tilde \nu) - P_i(\nu) ) \right) d\nu -
\int_{\mathbb R^{2n} } U \left( L_\nu \right) d\nu .
\end{align*}
Under assumption {\bf (A2)} and differentiability of $U$, this is
\begin{align*}
&
\mathbb E^{\tilde \nu} U\left( L_{\tilde \nu} \right) - \mathbb E^\nu U \left( L_\nu \right) =
\\ = &
\alpha \int_{\hat A} \int_0^1 U^\prime \left( L_\nu - (P_i(\tilde \nu) - P_i(\nu)) + t \alpha \right) dt\, d\nu -
\\ & -
\beta \int_{\tilde A} \int_0^1 U^\prime \left( L_\nu - (P_i(\tilde \nu) - P_i(\nu) ) - t\beta \right) dt\, d\nu -
\\ & -
(P_i(\tilde \nu) - P_i(\nu) )\int_{\mathbb R^{2n} } \int_0^1 U^\prime \left( L_\nu - t(P_i(\tilde \nu) - P_i(\nu) ) \right) dt \, d\nu .
\end{align*}
Taking into account concavity of $U$ and estimate \eqref{Z7}, this implies
\begin{align*}
&
\mathbb E^{\tilde \nu} U\left( L_{\tilde \nu} \right) - \mathbb E^\nu U \left( L_\nu \right) \geq
\\ \geq &
\alpha \int_{\hat A} U^\prime \left( L_\nu + \alpha + O(\nu(\hat A)+ \nu(\tilde A) ) \right) d\nu -
\\ & -
\beta \int_{\tilde A} U^\prime \left( L_\nu - \beta + O(\nu(\hat A)+ \nu(\tilde A) ) \right) d\nu -
\\ & -
(P_i(\tilde \nu) - P_i(\nu) )\int_{\mathbb R^{2n} } U^\prime \left( L_\nu + O(\nu(\hat A)+ \nu(\tilde A) ) \right) d\nu 
\\ \geq &
\alpha \int_{\hat A} \mathbb E^\nu_{\left. \left(X_{[i]},Z_{[i]}\right) \right|X_i}U^\prime \left( L_\nu \right) \left( \hat x_i, \hat z_i + \alpha + O( \varepsilon ) \right) d\nu -
\\ & -
\beta \int_{\tilde A} \mathbb E^\nu_{\left. \left(X_{[i]},Z_{[i]}\right) \right|X_i}U^\prime \left( L_\nu \right) \left( \hat x_i, \tilde z_i - \beta + O( \varepsilon ) \right) d\nu -
\\ & -
(P_i(\tilde \nu) - P_i(\nu) )\int_{\mathbb R^{2n} } U^\prime \left( L_\nu + O(\nu(\hat A)+ \nu(\tilde A) ) \right) d\nu 
\\ = &
\alpha \nu(\hat A) E^\nu_{\left. \left(X_{[i]},Z_{[i]}\right) \right|X_i}U^\prime \left( L_\nu \right) \left( \hat x_i, \hat z_i + \alpha + O( \varepsilon ) \right) -
\\ & -
\beta \nu(\tilde A) \mathbb E^\nu_{\left. \left(X_{[i]},Z_{[i]}\right) \right|X_i}U^\prime \left( L_\nu \right) \left( \hat x_i, \tilde z_i - \beta + O( \varepsilon ) \right) -
\\ & -
\nu(\hat A) \sum_{j=1}^{k_i} \frac{\partial \Psi_i}{\partial u_j} \left((\hat z_i + \alpha )^j - \hat z_i^j + O(\varepsilon) \right) \mathbb E^\nu U^\prime \left(L_\nu \right) -
\\ & -
\nu(\tilde A) \sum_{j=1}^{k_i} \frac{\partial \Psi_i}{\partial u_j} \left((\tilde z_i - \beta )^j - \tilde z_i^j + O(\varepsilon) \right) \mathbb E^\nu U^\prime \left(L_\nu \right) +
o\left( \nu (\hat A) + \nu(\tilde A) \right)
.
\end{align*}
Since the distribution of $X_i$ has no atoms except possibly $0$ and $\hat x_i>0$, for every $\varepsilon >0$ there is a pair of cylinders $\hat A_\varepsilon$, $\tilde A_\varepsilon$ such that:
\begin{enumerate}
\item 
$\hat A_\varepsilon = B_{i,\varepsilon}(\hat x_i, \hat z_i), 
\qquad \text{or} \qquad
\tilde A_\varepsilon = B_{i,\varepsilon}(\hat x_i, \tilde z_i)$;
\item
$\alpha \nu \left( \hat A_\varepsilon \right) =
\beta \nu \left( \tilde A_\varepsilon \right) $.
\end{enumerate}
For such pairs of cylinders, the estimate above becomes
\begin{align*}
&
\mathbb E^{\tilde \nu} U\left( L_{\tilde \nu} \right) - \mathbb E^\nu U \left( L_\nu \right) \geq
\\ \geq &
\alpha \nu\left(\hat A_\varepsilon \right) \Bigg( E^\nu_{\left. \left(X_{[i]},Z_{[i]}\right) \right|X_i}U^\prime \left( L_\nu \right) \left( \hat x_i, \hat z_i + \alpha + O( \varepsilon ) \right) -
\\ & \hspace{19mm} -
\mathbb E^\nu_{\left. \left(X_{[i]},Z_{[i]}\right) \right|X_i}U^\prime \left( L_\nu \right) \left( \hat x_i, \tilde z_i - \beta + O( \varepsilon ) \right) +
\\ & \hspace{19mm} +
\mathbb E^\nu U^\prime \left(L_\nu \right) \sum_{j=1}^{k_i} \frac{\partial \Psi_i}{\partial u_j} \left(\frac{(\tilde z_i - \beta )^j - \tilde z_i^j}{-\beta} + O(\varepsilon) \right) -
\\ & \hspace{19mm} -
\mathbb E^\nu U^\prime \left(L_\nu \right) \sum_{j=1}^{k_i} \frac{\partial \Psi_i}{\partial u_j} \left(\frac{(\hat z_i + \alpha )^j - \hat z_i^j}\alpha + O(\varepsilon) \right) \Bigg) +
o\left( \nu \left(\hat A_\varepsilon \right)\right) 
\\ = &
\alpha \nu\left(\hat A_\varepsilon \right) \Bigg( E^\nu_{\left. \left(X_{[i]},Z_{[i]}\right) \right|X_i}U^\prime \left( L_\nu \right) \left( \hat x_i, \hat z_i + \alpha + O( \varepsilon ) \right) -
\\ & \hspace{19mm} -
\mathbb E^\nu_{\left. \left(X_{[i]},Z_{[i]}\right) \right|X_i}U^\prime \left( L_\nu \right) \left( \hat x_i, \tilde z_i - \beta + O( \varepsilon ) \right) +
\\ & \hspace{19mm} +
\frac{\varphi_{i,\nu}(\tilde z_i -\beta) - \varphi_{i,\nu}(\tilde z_i)}{-\beta} -
\frac{\varphi_{i,\nu}(\hat z_i +\alpha ) - \varphi_{i,\nu}(\hat z_i)} \alpha + O(\varepsilon)
\Bigg) +
\\ & +
o\left( \nu \left(\hat A_\varepsilon \right)\right) .
\end{align*}
Since $U$ is concave, 
\begin{align*}
&
E^\nu_{\left. \left(X_{[i]},Z_{[i]}\right) \right|X_i}U^\prime \left( L_\nu \right) \left( \hat x_i, \hat z_i + \alpha + O( \varepsilon ) \right) \geq
\mathbb E^\nu_{\left. \left(X_{[i]},Z_{[i]}\right) \right|X_i}U^\prime \left( L_\nu \right) \left( \hat x_i, \tilde z_i - \beta + O( \varepsilon ) \right) 
\end{align*}
holds for sufficiently small $\varepsilon$, with strict inequality if $U$ is strictly concave.
In addition, since $\varphi_{i,\nu}$ is convex,
\begin{align*}
\frac{\varphi_{i,\nu}(\tilde z_i -\beta) - \varphi_{i,\nu}(\tilde z_i)}{-\beta} \geq
\frac{\varphi_{i,\nu}(\hat z_i +\alpha ) - \varphi_{i,\nu}(\hat z_i)} \alpha
\end{align*}
holds, and the inequality is strict if $\varphi_{i,\nu}$ is strictly convex.

It follows that for sufficiently small $\varepsilon>0$, $\mathbb E^{\tilde \nu} U\left( L_{\tilde \nu} \right) > \mathbb E^\nu U\left( L_\nu \right) $, and therefore $\nu$ is not optimal.
\end{proof}

The following corollary gives conditions that are easier to check than the conditions in Theorem \ref{T deterministic treaties}.

\begin{corollary}
\label{C higher moments}
Suppose that $U$ is differentiable and concave in $]-\infty,c[$, and let $\nu \in \overline{\mathcal H}_X$ be an optimal randomized strategy.
Suppose that assumption {\bf (A2)} of Theorem \ref{T optimality condition relaxed} holds, and the marginal distribution $\mu_{X_i}$ has no atoms except, possibly, at $x_i=0$.

Suppose that the reinsurance premium for risk $i \in \{1,2,\ldots, n\}$ is computed by a function $\Psi_i$ of the moments of order up to $k_i \in \mathbb N$, continuous in $D_{k_i} \cup \left\{ u \in [0,+\infty[^{k_i}: u_1 = u_2^{\frac 12} = \ldots = u_{k_i}^{\frac 1{k_i}} \right\}$, continuously differentiable in $D_{k_i}$, such that
\[
\frac{\partial \Psi_i}{\partial u_j}(u) \geq 0
\qquad \forall j \geq 2, \ u \in D_{k_i} .
\]

If at least one of the following conditions holds:
\begin{enumerate}
\item
The functions $\frac{\partial \Psi_i}{\partial u_j}$, $j=1,2, \ldots, k_i$, have at most one common zero in $D_{k_i}$, and $\mathbb E^\nu U^\prime(L_\nu) >0$;
\item
$U$ is strictly concave in $]-\infty,c]$;
\end{enumerate}
then the optimal reinsurance for risk $i$ is deterministic.
\end{corollary}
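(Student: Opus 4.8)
The plan is to derive the corollary from Theorem~\ref{T deterministic treaties}. Fix the index $i$ and an optimal $\nu \in \overline{\mathcal H}_X$, and suppose, towards a contradiction, that the strategy $\nu$ prescribes for risk $i$ is not deterministic. Then $\nu_{Z_i}$ is not concentrated at a single point, so by Lemma~\ref{L location of moments} the vector $u := (\mathbb E^\nu Z_i, \mathbb E^\nu Z_i^2, \ldots, \mathbb E^\nu Z_i^{k_i})$ lies in $D_{k_i}$ and $\nabla\Psi_i(u)$ is well defined. All the remaining hypotheses of Theorem~\ref{T deterministic treaties} (differentiability of $U$, regularity of $\Psi_i$, the atom condition on $\mu_{X_i}$, and {\bf (A2)}) are among the assumptions of the corollary, so it suffices to verify that $U$ and $-\varphi_{i,\nu}$ are both concave with at least one of them strictly concave: Theorem~\ref{T deterministic treaties} will then contradict the assumed non-determinism.

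Concavity of $-\varphi_{i,\nu}$ is the routine part. Since $U$ is nondecreasing, $\mathbb E^\nu U'(L_\nu) \geq 0$, and since $\tfrac{\partial\Psi_i}{\partial u_j}(u) \geq 0$ for $j \geq 2$, the second derivative
\[
\varphi_{i,\nu}''(t) = \mathbb E^\nu U'(L_\nu) \sum_{j=2}^{k_i} j(j-1)\,\frac{\partial\Psi_i}{\partial u_j}(u)\, t^{j-2}
\]
is nonnegative for $t \geq 0$, so $\varphi_{i,\nu}$ is convex (and $U$ is concave by assumption). Under condition~2, $U$ is strictly concave and Theorem~\ref{T deterministic treaties} applies at once. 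The substance of the proof therefore lies in condition~1, under which I would instead show that $\varphi_{i,\nu}$ is \emph{strictly} convex.

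Since $\mathbb E^\nu U'(L_\nu) > 0$ (part of condition~1) and $\varphi_{i,\nu}''$ is a nonnegative polynomial, $\varphi_{i,\nu}$ is strictly convex on $[0,+\infty[$ unless all coefficients $\tfrac{\partial\Psi_i}{\partial u_j}(u)$ with $j \geq 2$ vanish, i.e.\ unless $u$ is a common zero of $\tfrac{\partial\Psi_i}{\partial u_2}, \ldots, \tfrac{\partial\Psi_i}{\partial u_{k_i}}$. By the common-zero hypothesis there is at most one such point $u^\ast$, and at it $\nabla\Psi_i(u^\ast) = \bigl(\tfrac{\partial\Psi_i}{\partial u_1}(u^\ast),0,\ldots,0\bigr)$. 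If $\tfrac{\partial\Psi_i}{\partial u_1}(u^\ast) < 0$, then $\sum_{j} j\hat z_i^{\,j-1}\tfrac{\partial\Psi_i}{\partial u_j}(u^\ast) < 0$ for all $\hat z_i \geq 0$; choosing $(\hat x_i,\hat z_i)$ satisfying \eqref{Eq z<x} (possible because the treaty for risk $i$ is non-deterministic) and invoking Theorem~\ref{T optimality condition relaxed}(\ref{T optimality S3}) gives $\nu\{U(L_\nu) = U(c)\} = 1$, which together with $\mathbb E^\nu U'(L_\nu) > 0$ forces $L_\nu = c$ and hence $Z_i = X_i$ $\nu$-a.s., contradicting non-determinism. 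If $\tfrac{\partial\Psi_i}{\partial u_1}(u^\ast) \geq 0$, one argues directly: the cost of reinsuring risk $i$ then varies, to first order, only through the premium, whose sensitivity to the higher moments is nonnegative and, since $u^\ast$ is the \emph{only} common zero, not identically flat near $u^\ast$; replacing the treaty for risk $i$ by the deterministic ``cap'' sharing its first moment lowers all higher moments simultaneously, so it does not increase $P_i(\nu)$, does not change the marginals, and by concavity of $U$ does not decrease $\mathbb E^\nu U(L_\nu)$, while it strictly increases it through the premium term unless the treaty was already deterministic.

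This last step — the degenerate case $u = u^\ast$, where $-\varphi_{i,\nu}$ is merely affine so that Theorem~\ref{T deterministic treaties} is not directly available and one must fall back on the optimality conditions of Theorem~\ref{T optimality condition relaxed} together with a mean-preserving, variance-reducing replacement of the treaty — is the main obstacle; everything else is bookkeeping. Once it is settled, $\varphi_{i,\nu}$ is strictly convex under condition~1, Theorem~\ref{T deterministic treaties} yields the contradiction in that case as well, and the corollary follows.
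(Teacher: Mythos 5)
Your main line — reduce to Theorem~\ref{T deterministic treaties} after noting that $\varphi_{i,\nu}$ is convex, settle condition~2 by strict concavity of $U$, and settle condition~1 by strict convexity of $\varphi_{i,\nu}$ whenever some $\frac{\partial \Psi_i}{\partial u_j}(u)>0$ with $j\geq 2$ — is exactly the paper's proof, which consists of the single observation that under the corollary's hypotheses $U$ and $-\varphi_{i,\nu}$ are concave with at least one of them strictly concave, followed by the appeal to Theorem~\ref{T deterministic treaties}. Where you go beyond the paper is the degenerate sub-case in which all coefficients $\frac{\partial \Psi_i}{\partial u_j}(u)$, $j\geq 2$, vanish, and that is precisely where your write-up has genuine gaps. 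First, you misquote the hypothesis: condition~1 restricts the common zeros of \emph{all} $k_i$ partial derivatives, including $j=1$, not of the higher-order ones alone, so ``there is at most one such point $u^\ast$'' does not follow; for instance if $\Psi_i$ does not depend on the higher moments (or $k_i=1$, as for the expected value principle), every point of $D_{k_i}$ is a common zero of the higher-order partials while condition~1 holds vacuously, so the degenerate regime is not confined to a single point. Second, and more seriously, the sub-case $\frac{\partial \Psi_i}{\partial u_1}(u)\geq 0$ is not proved: replacing the randomized treaty by $\mathbb E^\nu\left[ Z_i \,\middle|\, X_i\right]$ (your ``cap'') yields only weak inequalities — expected utility does not decrease by conditional Jensen, the higher moments do not increase — and since the higher-order partials vanish at $u$ there is no first-order decrease of the premium either, so optimality of $\nu$ is not contradicted; the assertion that the premium term gives a strict improvement ``unless the treaty was already deterministic'' is unsupported. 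In fact no argument of this shape can close that sub-case: take $U(x)=x$ (concave, differentiable, $\mathbb E^\nu U^\prime(L_\nu)=1>0$) and $\Psi_i(u_1)=u_1$ with $k_i=1$ (so $\frac{\partial \Psi_i}{\partial u_1}\equiv 1$ has no zeros and condition~1 holds); then the expected utility is completely insensitive to the law of $Z_i$, randomized optima exist, and no strict improvement is available.

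Your other sub-case ($\frac{\partial \Psi_i}{\partial u_1}(u)<0$, via statement~\ref{T optimality S3} of Theorem~\ref{T optimality condition relaxed} together with $\mathbb E^\nu U^\prime(L_\nu)>0$ forcing $L_\nu=c$ a.s. and hence $Z_i=X_i$ a.s.) is essentially sound, and your instinct that the affine-$\varphi_{i,\nu}$ regime needs separate treatment is a fair observation — the paper's own two-line proof does not enter it, reading condition~1 as directly delivering strict concavity of $-\varphi_{i,\nu}$ at the optimal moment vector. But as submitted, the second sub-case is a gap rather than a proof, so your proposal does not establish the corollary beyond what the paper's direct appeal to Theorem~\ref{T deterministic treaties} already covers.
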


\begin{proof}
Under the assumptions of the corollary, the functions $U$, $- \varphi_{i,\nu}$ are concave with at least one of them being strictly concave.
Thus, the result follows immediately from Theorem \ref{T deterministic treaties}.
\end{proof}

\subsection{The expected value principle}
\label{SS expected value}

From Theorem \ref{T optimality condition deterministic} and Corollary \ref{C higher moments}, it is easy to check that under mild conditions, the optimal stategy concerning a risk for which the reinsurance premium is computed by the expected value principle is deterministic, and derive the corresponding optimality condition.
More precisely, we obtain the following corollary:

\begin{corollary}
\label{C expected value principle}
Suppose that $U$ is continuously differentiable and concave in $]-\infty,c[$, let $\nu \in \overline{\mathcal H}_X$ be an optimal randomized strategy, and suppose that assumption {\bf (A2)} of Theorem \ref{T optimality condition relaxed} holds.

If the reinsurance premium for risk $i \in \{1,2,\ldots, n\}$ is computed by the expected value principle (i.e., $\Psi_i (u) = (1 + \theta_i)u$), and the marginal distribution $\mu_{X_i}$ is absolutely continuous except, possibly, for an atom at $x_i=0$, then the optimal reinsurance for risk $i$ is a deterministic function $Z_i \in \mathcal Z_i$, such that
\begin{align*}
&
E^\nu_{\left. \left( X_{[i]}, Z_{[i]} \right) \right| X_i } U^\prime \left( L_\nu \right) (x_i, 0) \leq
(1 + \theta_i) \mathbb E^\nu U^\prime (L_\nu) & \text{if } Z_i(x_i)=0,
\\ &
E^\nu_{\left. \left( X_{[i]}, Z_{[i]} \right) \right| X_i } U^\prime \left( L_\nu \right) (x_i, Z_i(x_i)) =
(1 + \theta_i) \mathbb E^\nu U^\prime (L_\nu) &
\text{if } 0 < Z_i(x_i) < x_i ,
\\ &
E^\nu_{\left. \left( X_{[i]}, Z_{[i]} \right) \right| X_i } U^\prime \left( L_\nu \right) (x_i, x_i) \geq
(1+ \theta_i) \mathbb E^\nu U^\prime (L_\nu) & \text{if } Z_i(x_i)=x_i,
\end{align*}
holds for $\mu_{X_i}$-almost every $x_i$.
\end{corollary}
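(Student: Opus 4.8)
The plan is to deduce the statement from two results already established: Corollary~\ref{C higher moments}, which gives that the treaty $\nu$ prescribes for risk $i$ is deterministic, and Theorem~\ref{T optimality condition deterministic}, which gives the pointwise conditions once the explicit form of $\Psi_i$ is inserted.

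First I would check that the expected value principle fits the hypotheses of both results. Here $k_i=1$ and $\Psi_i(u)=(1+\theta_i)u$ is affine, hence $C^\infty$ on all of $[0,+\infty[$; in particular it is continuous on $D_1=\,]0,+\infty[$ and on the set of degenerate moment vectors appearing in the hypotheses of Theorem~\ref{T optimality condition deterministic} (which for $k_i=1$ is all of $[0,+\infty[$), continuously differentiable on $D_1$, and its gradient $\nabla\Psi_i\equiv 1+\theta_i$ is a positive constant, which therefore extends by continuity to that degenerate set. Consequently assumption~\textbf{(A1)} of Theorem~\ref{T optimality condition relaxed} holds automatically, while assumption~\textbf{(A2)} and the absolute continuity of $\mu_{X_i}$ (except possibly for an atom at $0$) are among the hypotheses; thus all the structural requirements of Corollary~\ref{C higher moments} and Theorem~\ref{T optimality condition deterministic} are in force.

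For the determinism step I would invoke Corollary~\ref{C higher moments}. Its sign condition ``$\partial\Psi_i/\partial u_j\ge0$ for $j\ge2$'' is vacuous since $k_i=1$, and $\partial\Psi_i/\partial u_1=1+\theta_i>0$ has no zero in $D_1$, so the family $\{\partial\Psi_i/\partial u_j\}_{j=1}^{k_i}$ has no common zero there, a fortiori at most one. Hence condition~1 of that corollary applies provided $\mathbb E^\nu U'(L_\nu)>0$, and condition~2 applies if $U$ is strictly concave; in either case we conclude that the strategy $\nu$ prescribes for risk $i$ is deterministic, i.e.\ $Z_i=Z_i(X_i)$ for some $Z_i\in\mathcal Z_i$. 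With this in hand Theorem~\ref{T optimality condition deterministic} applies, and substituting $k_i=1$ and $\partial\Psi_i/\partial u_1=1+\theta_i$ its two inequalities collapse to the following: for $\mu_{X_i}$-almost every $x_i>0$, $\mathbb E^\nu_{(X_{[i]},Z_{[i]})\mid X_i}U'(L_\nu)(x_i,Z_i(x_i))\le(1+\theta_i)\,\mathbb E^\nu U'(L_\nu)$ whenever $Z_i(x_i)<x_i$, and the reverse inequality whenever $Z_i(x_i)>0$. Since $Z_i(x_i)\in[0,x_i]$, exactly one of $Z_i(x_i)=0$, $0<Z_i(x_i)<x_i$, $Z_i(x_i)=x_i$ holds; in the first only the ``$\le$'' bound is available, in the second both are (forcing equality), in the third only the ``$\ge$'' bound, which are precisely the three displayed conditions.

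The calculations above are routine; the step that needs care is the passage to determinism. Because $k_i=1$, the auxiliary polynomial $\varphi_{i,\nu}$ of Theorem~\ref{T deterministic treaties} is affine and not strictly convex, so the strict concavity needed there must be supplied either by $U$ itself or, through condition~1 of Corollary~\ref{C higher moments}, by the nondegeneracy $\mathbb E^\nu U'(L_\nu)>0$. The remaining case $\mathbb E^\nu U'(L_\nu)=0$ forces $U'(L_\nu)=0$ $\nu$-almost surely (as $U'\ge0$); then $U$ is constant above the essential infimum of $L_\nu$, every strategy keeping the net profit above that level is optimal, and both sides of the asserted conditions vanish, so this degenerate situation must be excluded or handled separately.
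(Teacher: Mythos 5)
Your argument is exactly the paper's proof: determinism of the treaty for risk $i$ follows from Corollary~\ref{C higher moments} (the sign and common-zero conditions being trivial for $\Psi_i(u)=(1+\theta_i)u$ with $k_i=1$), and the three displayed conditions are Theorem~\ref{T optimality condition deterministic} specialized to $\partial\Psi_i/\partial u_1=1+\theta_i$, split according to whether $Z_i(x_i)=0$, $0<Z_i(x_i)<x_i$, or $Z_i(x_i)=x_i$. Your closing remark is also apt: the appeal to Corollary~\ref{C higher moments} tacitly requires either strict concavity of $U$ or $\mathbb E^\nu U^\prime(L_\nu)>0$, so the degenerate case $\mathbb E^\nu U^\prime(L_\nu)=0$ (where $U$ is flat on the range of $L_\nu$ and determinism can fail) is implicitly excluded in the paper as well.
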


\begin{proof}
The fact that the optimal reinsurance strategy for risk $i$ is deterministic follows immediately from Corollary \ref{C higher moments}.
Thus, the optimality conditions follow from Theorem \ref{T optimality condition deterministic}.
\end{proof}

\subsection{Stop-loss is not optimal}
\label{SS CaiWei}

It is known \cite{CW12} that if all reinsurance premia are calculated by expected value principia, and the risks are positively dependent in stochastic order, then stop-loss treaties $Z_i(x)=\max (0,x-M_i)$ are optimal among the class of all deterministic treaties such that the retained risks $X_i - Z_i(X_i)$ are increasing functions of $X_i$.
	
We will now show that stop-loss treaties are in general not optimal in the wider class of measurable deterministic treaties, even in the setting above.
Thus, the monotonicity constraint on the retained risk is typically an active constraint.
	
Consider two risks, $X_1$ and $X_2$, with absolutely continuous (marginal) distributions with support $[0,+\infty[$, each being reinsured through a stop-loss treaty, priced by an expected value premium principle:
\[
Z_i(x)=\max \left( 0, x-M_i \right) ,
\quad
P(Z_i)=(1+\theta_i) \mathbb E Z_i(X_i),
\qquad
i=1,2,
\]
and consider an exponential utility function
\[
U(x) = -e^{-Rx}.
\]
By Corollary \ref{C expected value principle}, if such a strategy is optimal, then the conditions
\[
\left\{
\begin{array}{rcll}
(1+\theta_i)E\left[e^{R(X_1-Z_1+X_2-Z_2)}\right] &\geqslant& 
E\left[ e^{R(X_1-Z_1+X_2-Z_2)}|X_i=x_i\right] & x_i \leqslant M_i \\
&& \\
(1+\theta_i)E\left[e^{R(X_1-Z_1+X_2-Z_2)}\right] &=& 
E\left[ e^{R(X_1-Z_1+X_2-Z_2)}|X_i=x_i\right] & x_i \geqslant M_i 
\end{array}
\right.
\]
hold for $\mu_{X_i}$-almost every $x_i$, $i=1,2$.
Thus, for $i=1$ (and similarly for $i=2$), we have
\begin{equation}
\label{OptCondExpVal}
(1+\theta_1)E\left[e^{R(X_1-Z_1+X_2-Z_2)}\right] =
e^{RM_1}E\left[ e^{R(X_2-Z_2)}|X_1=x_1\right], 
\end{equation}
for $\mu_{X_1}$-almost every $x_1 \geq M_1$.
If $X_1$ and $X_2$ are dependent through a given copula $C(u,v)$, then
\begin{eqnarray*}
		E\left[e^{R(X_2-Z_2)}| X_1=x_1\right] = 
e^{RM_2}-\int_{0}^{M_2}\left(e^{RM_2}-e^{Rx_2}\right)
	\frac{\partial^2C}{\partial u\partial v}(F_1(x_1),F_2(x_2))f_2(x_2)\, dx_2, 
\end{eqnarray*}
which, from \eqref{OptCondExpVal}, must be constant for $x_1 \geq M_1$, implying that:
\begin{equation}
\label{ConstExpr}
\int_{0}^{M_2} \left(e^{RM_2}-e^{Rx_2}\right)\frac{\partial^3
	C}{\partial u^2\partial v}(u,F_2(x_2))f_2(x_2)\,\,dx_2 = 0, \quad
\forall u\in [F_1(M_1),1].
\end{equation}

Consider now that the risks are dependent through a copula: 
\begin{equation}
\label{Copula}
\displaystyle C(u,v)=uv\left( 1+(u-1)(v-1)\alpha
\right), \qquad \text{with } \alpha \in ]0,1]. 
\end{equation}
Since
\[
\frac{\partial^2 C}{\partial u^2} = 2v(v-1)\alpha \leq 0, 
\quad
\frac{\partial^2 C}{\partial v^2} = 2u(u-1)\alpha \leq 0,
\qquad
\forall (u,v) \in [0,1]^2,
\]
the risks $X_1$
and
$X_2$ are positively dependent in stochastic ordering.

We claim that, for copula \eqref{Copula}, the left-hand side of \eqref{ConstExpr} is strictly decreasing with respect to $M_2\in [0,+\infty[$ and therefore $M_2=0$ is the unique value satisfying condition \eqref{ConstExpr}.
To see this, notice that
$ \frac{\partial^3 C}{\partial u^2\partial v} = 2(2v-1)\alpha$ and, the derivative with respect to $M_2$ of the left-hand side of 
\eqref{ConstExpr} is
\begin{eqnarray*}
	\lefteqn{
		\frac{\partial}{\partial M_2} \int_{0}^{M_2} \left(e^{RM_2}-e^{Rx_2}\right)\frac{\partial^3
			C}{\partial u^2\partial v}(u,F_2(x_2))f_2(x_2)\,\,dx_2 = } &&  \\
	&\hspace{4cm}=& R e^{RM_2} 2\alpha F_2(M_2)\left[F_2(M_2)-1\right] <0.
\end{eqnarray*}
Hence, stop-loss is optimal only if it is optimal to cede the totality of the risk.

\subsection{Variance-related principles}
\label{SS variance-related}

If the reinsurance premium is computed by a variance-related premium calculation principle, then the optimal treaty for that risk is deterministic and the optimality conditions are as follows.

\begin{corollary}
\label{C variance}
Suppose that $U$ is differentiable and concave in $]-\infty,c[$, and let $\nu \in \overline{\mathcal H}_X$ be an optimal randomized strategy, and suppose that assumption {\bf (A2)} of Theorem \ref{T optimality condition relaxed} holds

If the marginal distribution $\mu_{X_i}$ is absolutely continuous except possibly for an atom at $x_i=0$, and the reinsurance premium for risk $i \in \{1,2,\ldots, n\}$ is computed by a variance-related principle 
\[
P_i(\nu) = \mathbb E^\nu Z_i + g \left( \mathbb E^\nu Z_i^2 - \left( \mathbb E^\nu Z_i \right)^2 \right) ,
\]
with $g$ continuous in $[0,+\infty[$, continuously differentiable in $]0,+\infty[$, monotonically increasing, then the optimal reinsurance for risk $i$ is a deterministic function $Z_i \in \mathcal Z_i$ such that
\begin{align*}
&
\mathbb E^\nu _{\left.\left( X_{[i]},Z_{[i]} \right) \right| X_i} U^\prime (L_\nu)(x_i,0) \leq 
\left( 1 - 2 \mathbb E Z_i g^\prime (\mathrm{Var}(Z_i))  \right) \mathbb E^\nu U^\prime(L_\nu) & \text{if } Z_i(x_i)=0,
\\ &
\mathbb E^\nu _{\left.\left( X_{[i]},Z_{[i]} \right) \right| X_i} U^\prime (L_\nu)(x_i,Z_i(x_i)) = 
\left( 1 + 2 \left(Z_i(x_i) - \mathbb E Z_i\right) g^\prime (\mathrm{Var}(Z_i))  \right) \mathbb E^\nu U^\prime(L_\nu) & \text{if } 0<Z_i(x_i)<x_i,
\\ &
\mathbb E^\nu _{\left.\left( X_{[i]},Z_{[i]} \right) \right| X_i} U^\prime (L_\nu)(x_i,x_i) \geq 
\left( 1 + 2 \left(x_i - \mathbb E Z_i\right) g^\prime (\mathrm{Var}(Z_i))  \right) \mathbb E^\nu U^\prime(L_\nu) & \text{if } Z_i(x_i)=x_i
\end{align*}
holds for $\mu_{X_i}$-almost every $x_i$.
\end{corollary}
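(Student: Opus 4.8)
The plan is to recognise the variance-related principle as the moment-based premium \eqref{Eq premium moments relaxed} with $k_i=2$ and
\[
\Psi_i(u_1,u_2)=u_1+g\!\left(u_2-u_1^2\right),
\]
and then to deduce the deterministic character of the optimal treaty from Corollary \ref{C higher moments} (hence ultimately from Theorem \ref{T deterministic treaties}) and the explicit optimality conditions from Theorem \ref{T optimality condition deterministic}. The first step is to record the regularity and the partial derivatives of $\Psi_i$. On $D_2=\{(u_1,u_2)\in\,]0,+\infty[^2:u_1^2<u_2\}$ the argument $u_2-u_1^2$ of $g$ is strictly positive, so $\Psi_i$ is continuously differentiable there with
\[
\frac{\partial\Psi_i}{\partial u_1}(u)=1-2u_1\,g'\!\left(u_2-u_1^2\right),\qquad
\frac{\partial\Psi_i}{\partial u_2}(u)=g'\!\left(u_2-u_1^2\right);
\]
on the diagonal piece $\{u_1=u_2^{1/2}\}$ the argument equals $0$, where $g$ is assumed only continuous, so $\Psi_i$ is continuous on $D_2\cup\{u_1=u_2^{1/2}\}$ — precisely the regularity required by Corollary \ref{C higher moments} and Theorem \ref{T optimality condition deterministic}. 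Monotonicity of $g$ gives $\partial\Psi_i/\partial u_2=g'\ge 0$ on $D_2$, which is the sign hypothesis $\partial\Psi_i/\partial u_j\ge 0$ ($j\ge 2$) of Corollary \ref{C higher moments}; and a differentiable concave $U$ is automatically $C^1$, so the slightly stronger smoothness of $U$ used in Theorem \ref{T optimality condition deterministic} comes for free.

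Next I would obtain determinism. The two partials have no common zero in $D_2$: if $\partial\Psi_i/\partial u_2(u)=g'\!\left(u_2-u_1^2\right)=0$, then $\partial\Psi_i/\partial u_1(u)=1\neq 0$. Thus condition (1) of Corollary \ref{C higher moments} holds trivially whenever $\mathbb E^\nu U'(L_\nu)>0$ (which is automatic, e.g., for exponential utility, cf.\ Remark \ref{Rmk 2}), and condition (2) holds if $U$ is strictly concave; in either case Corollary \ref{C higher moments} yields that the optimal strategy for risk $i$ is a deterministic $Z_i\in\mathcal Z_i$. With determinism in hand I would invoke Theorem \ref{T optimality condition deterministic}: assumption {\bf (A2)} is assumed, and {\bf (A1)} holds because for a non-constant deterministic $Z_i$ the law $\nu_{Z_i}$ is not a point mass (and when $Z_i$ is constant the coefficient multiplying $g'(\mathrm{Var}(Z_i))=g'(0)$ in the formulas below vanishes $\mu_{X_i}$-a.e., so the behaviour of $g'$ at $0$ is irrelevant there). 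Finally, substituting the partials, evaluated at $u=\bigl(\mathbb E^\nu Z_i,\mathbb E^\nu Z_i^2\bigr)$, into the conclusion of Theorem \ref{T optimality condition deterministic} and using
\[
\sum_{j=1}^{2}j\,Z_i(x_i)^{\,j-1}\frac{\partial\Psi_i}{\partial u_j}
=\frac{\partial\Psi_i}{\partial u_1}+2Z_i(x_i)\frac{\partial\Psi_i}{\partial u_2}
=1+2\bigl(Z_i(x_i)-\mathbb E^\nu Z_i\bigr)\,g'\!\left(\mathrm{Var}(Z_i)\right)
\]
produces the three displayed relations for $Z_i(x_i)=0$, $0<Z_i(x_i)<x_i$ and $Z_i(x_i)=x_i$ (with $\le$, $=$, $\ge$ respectively, the equality in the middle case arising by combining the two inequalities of Theorem \ref{T optimality condition deterministic}).

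The differentiation of $\Psi_i$ and the final substitution are routine; the one point that needs care is that $D_2$ is exactly the set on which $g'$ is guaranteed to exist, so every hypothesis phrased in terms of differentiability of $\Psi_i$ must be checked on $D_2$ and not on its closure. The accompanying observation — that the degenerate case $\mathrm{Var}(Z_i)=0$ is harmless because there the factor $Z_i(x_i)-\mathbb E^\nu Z_i$ is $0$ $\mu_{X_i}$-a.e. — is what keeps the stated conditions meaningful even though $g'$ need not extend continuously to $0$; I expect this boundary bookkeeping to be the main (mild) obstacle, the rest being a direct specialisation of the earlier results.
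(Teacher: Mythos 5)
Your proposal is correct and follows essentially the same route as the paper's own proof: identify $\Psi_i(u_1,u_2)=u_1+g\left(u_2-u_1^2\right)$ with $k_i=2$, compute $\frac{\partial\Psi_i}{\partial u_1}=1-2u_1g'\left(u_2-u_1^2\right)$ and $\frac{\partial\Psi_i}{\partial u_2}=g'\left(u_2-u_1^2\right)\ge 0$, observe that the two partials have no common zero in $D_2$, and then invoke Corollary \ref{C higher moments} for determinism and Theorem \ref{T optimality condition deterministic} for the three displayed conditions. Your additional bookkeeping on the degenerate case $\mathrm{Var}(Z_i)=0$ (where $g'$ need not extend continuously to $0$, so assumption {\bf (A1)} needs the non-degeneracy of $\nu_{Z_i}$) is a mild refinement beyond the paper's proof, which cites the two earlier results without further comment.
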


\begin{proof}
Under the assumptions of the Corollary,
\begin{align*}
\Psi_i(u_1,u_2)= u_1 + g(u_2-u_1^2)
\end{align*}
Is continuous in $D_2 \cup \left\{ (u_1,u_2) \in [0,+\infty[^2: u_1 = u_2^{\frac 12} \right\}$, continuously differentiable in $D_2$, and
\begin{align*}
\frac{\partial \Psi_i}{\partial u_1} (u_1,u_2) = 1 - 2 u_1 g^\prime \left(u_2 -u_1^2 \right) ,
\qquad
\frac{\partial \Psi_i}{\partial u_2} (u_1,u_2) = 
g^\prime \left(u_2 -u_1^2 \right) .
\end{align*}
By assumption, $\frac{\partial \Psi_i}{\partial u_2} \geq 0$, and clearly $\frac{\partial \Psi_i}{\partial u_1} $, $\frac{\partial \Psi_i}{\partial u_2} $ have no common zeros.
Thus, Corollary \ref{C higher moments} guarantees that the optimal strategy for risk $i$  is deterministic, and the optimality conditions follow from Theorem \ref{T optimality condition deterministic}.
\end{proof}

Notice that the assumptions of Corollary \ref{C variance} (and a fortiori, Theorem \ref{T deterministic treaties}) include cases where the premium calculation principle is not a convex functional in the space of deterministic strategies.
For details, see the characterization of convex variance related principia in \cite[Proposition 1]{GC10b}.

\section{Computation of optimal strategies}\label{S algorithm}

There is little hope of finding closed form solutions for the optimality conditions provided in Theorems \ref{T optimality condition relaxed} or \ref{T optimality condition deterministic}.
Even in cases where expected utility and moments can be explicitely computed for many common treaties (say, risks exponentially distributed and exponential utility function) and the assumptions of Corollary \ref{C higher moments} hold (say, premia are computed by expected value principles), the optimality conditions in Theorem \ref{T optimality condition deterministic} translate into transcendental equations which have no closed form solutions.
Thus, any practical application of the results in the previous sections must rely on numerical solutions.

A full analysis of a numerical scheme is outside the scope of the present text, and certainly there are many details that depend on the particular distributions, utility function, and premia calculation principles.
Below, we present a sketch of a general analysis, focusing on the solutions of the optimality conditions provided in Theorem \ref{T optimality condition deterministic}, under the assumptions of Corollary \ref{C higher moments}.

In Section \ref{SS fixed points}, we show that the optimality conditions are equivalent to a certain fixed point problem.
From this, we outline a numerical scheme in Section \ref{SS numerical scheme}.
In Section \ref{SS numerical issues}, we discuss the main issues arising in a practical implementation of the numerical scheme and argue that the scheme is feasible, even when the number of risks is moderately large.
Section \ref{SS beyond deterministic} contains a brief discussion on how the scheme can be generalized to the general optimality conditions provided in Theorem \ref{T optimality condition relaxed}.
In Section \ref{SS numerical example} we present numerical examples illustrating the application of the proposed numerical approach.

\subsection{Optimality conditions as a fixed point problem}\label{SS fixed points}

In this section, as well as in Sections \ref{SS numerical scheme} and \ref{SS numerical issues}, we will always assume that the number of risks is $n \geq 2$ and the assumptions of Corollary \ref{C higher moments} hold for $i=1,2, \ldots, n$.
\vspace{5mm}

Fix a reinsurance strategy $\hat Z=(\hat Z_1, \hat Z_2, \ldots, \hat Z_n) \in \mathcal Z$.
For each
$m=\left(m_0, m_1,\ldots , m_n \right) \in \mathbb R \times \overline D_{k_1} \times \cdots \times \overline D_{k_n}$, consider the functions $\Lambda_i^{m_{[0,i]},\hat Z_{[i]}} $, defined in the domain $\left\{ (x,z): x \geq 0, -P_i(X_i) \leq z \leq x \right\}$ by
\begin{align*}
\Lambda_i^{m_{[0,i]},\hat Z_{[i]}} (x,z) =
\left. \mathbb E\right|^{\mu_X}_{X_i=x} U^\prime \left( c - \sum_{j \neq i} \left( X_j + \Psi_j(m_j) - \hat Z_j(X_j)  \right) - X_i + z \right),
\end{align*}
with $P_i(X_i)$ being the premium charged for cedence of the full risk $i$, and $m_{[0,i]}$ being the array $m$ with the elements $m_0$ and $m_i$ omitted.
Under the assumptions of Corollary \ref{C higher moments}, the function
\[
G_i^{m_{[0,i]},\hat Z_{[i]}}(m_0,m_i,x_i,z_i) = \Lambda_i^{m_{[0,i]}, \hat Z_{[i]}} (x_i , z_i - \Psi_i(m_i) ) - m_0 \sum_{j=1}^{k_i} \frac{\partial \Psi_i}{\partial u_j}(m_i) j z_i^{j-1}
\]
is strictly decreasing with respect to $z_i$ in the interval $[0,x_i]$.
Thus, the equation
\[
G_i^{m_{[0,i]},\hat Z_{[i]}}(m_0,m_i,x_i,z_i) = 0
\]
defines implicitely one unique function $z_i=Z_i^{m_{[0,i]},\hat Z_{[i]}}(m_0,m_i,x_i)$ such that
\begin{align}
& \label{Eq Z implicit 1}
Z_i^{m_{[0,i]},\hat Z_{[i]}}(m_0,m_i,x_i) = 0,
\qquad \text{if }
G_i^{m_{[0,i]},\hat Z_{[i]}}(m_0,m_i,x_i,0) \leq 0,
\\ & \label{Eq Z implicit 2}
Z_i^{m_{[0,i]},\hat Z_{[i]}}(m_0,m_i,x_i) = x_i,
\qquad \text{if }
G_i^{m_{[0,i]},\hat Z_{[i]}}(m_0,m_i,x_i,x_i) \geq 0,
\\ & \label{Eq Z implicit 3}
G_i^{m_{[0,i]},\hat Z_{[i]}}(m_0,m_i,x_i,Z_i^{m_{[0,i]},\hat Z_{[i]}}(m_0,m_i,x_i)) = 0,
\qquad \text{in other cases}.
\end{align}
That is, given the treaties $\hat Z_{[i]}$ and the parameters $m_{[0,i]}$, the scheme above defines uniquely a family of treaties for risk $i$, depending on the parameters $m_0$ and $m_i$.

We introduce the functions $\Upsilon_i^{m_{[0,i]},\hat Z_{[i]}}: \mathbb R \times \overline D_{k_i} \mapsto  \mathbb R \times \overline D_{k_i}$, defined as
\begin{align*}
&
\Upsilon_i^{m_{[0,i]},\hat Z_{[i]}}(m_0,m_i) =
\\ = &
\mathbb E^{\mu_X} \Bigg(
U^\prime \left( 
L_{Z_i^{m_{[0,i]},\hat Z_{[i]}}(m_0,m_i,X_i), \hat Z_{[i]}}
\right),
Z_i^{m_{[0,i]},\hat Z_{[i]}}(m_0,m_i,X_i),
\left(Z_i^{m_{[0,i]},\hat Z_{[i]}}(m_0,m_i,X_i)\right)^2,
\ldots 
\\ & \hspace{70pt}
\ldots ,
\left(Z_i^{m_{[0,i]},\hat Z_{[i]}}(m_0,m_i,X_i)\right)^{k_i}
\Bigg) 
\\ = &
\mathbb E^{\mu_X} \Bigg(
\Lambda_i^{m_{[0,i]}, \hat Z_{[i]}}
\left( X_i,
Z_i^{m_{[0,i]},\hat Z_{[i]}}(m_0,m_i,X_i) - \Psi_i(m_i)
\right),
Z_i^{m_{[0,i]},\hat Z_{[i]}}(m_0,m_i,X_i),
\\ & \hspace{70pt}
\left(Z_i^{m_{[0,i]},\hat Z_{[i]}}(m_0,m_i,X_i)\right)^2,
\ldots ,
\left(Z_i^{m_{[0,i]},\hat Z_{[i]}}(m_0,m_i,X_i)\right)^{k_i}
\Bigg) .
\end{align*}
The reinsurance treaty $\hat Z_i$ satisfies the optimality condition in Theorem \ref{T optimality condition deterministic} if and only if $(m_0,m_i)$ is a fixed point of $\Upsilon_i^{m_{[i]}, \hat Z_{[i]}}$, and the parameters $m_{[0,i]}$ are
\begin{align*}
m_j = \mathbb E^{\mu_X} \left(
\hat Z_j(X_j), \hat Z_j(X_j)^2, \ldots , \hat Z_j(X_j)^{k_j}
\right)
\qquad \text{for } j \neq i.
\end{align*}
Under the hypothesis of Corollary \ref{C higher moments}, this implies that $\hat Z_i$ is optimal for risk $i$ given the policies $\hat Z_{[i]}$ for the other risks.
The strategy $\hat Z = \left( \hat Z_1, \hat Z_2, \ldots, \hat Z_n \right)$ satisfies the optimality conditions of Theorem \ref{T optimality condition deterministic} if and only if for every $i \in \{1,2, \ldots, n\}$, $(m_0,m_i)$ is a fixed point of the corresponding map $\Upsilon_i^{m_{[0,i]}, \hat Z_{[i]}}$.

\subsection{Outline of numerical scheme}\label{SS numerical scheme}

Based in the considerations above, we propose the following scheme to solve the optimality conditions given in Theorem \ref{T optimality condition deterministic}:

\begin{description}
\item[1.] (initialization):
\begin{description}
\item[1.1.] 
Pick initial treaties $Z^{(0)} = \left( Z^{(0)}_1, Z^{(0)}_2, \ldots, Z^{(0)}_n \right) \in \mathcal Z$.
\item[1.2.] 
Compute the initial vector $m^{(0)} = \left( m^{(0)}_0, m^{(0)}_1, \ldots , m^{(0)}_n \right)$:
\begin{align*}
&
m^{(0)}_0 : = 
\mathbb E^{\mu_X} 
U^\prime \left( 
L_{Z^{(0)}_1,Z^{(0)}_2,\ldots Z^{(0)}_n}
\right)
\\ &
m^{(0)}_j : =
\mathbb E^{\mu_X} \left(
Z^{(0)}_j(X_j), \left( Z^{(0)}_j(X_j \right)^2, \ldots , \left( Z^{(0)}_j(X_j \right)^{k_j}
\right), \qquad j = 1,2, \ldots , n
\end{align*}
\end{description} 
\item[2.]
(main loop)
Repeat until convergence:
\begin{description}
\item[2.1.]
Set
$ m^{(i+1)} := m^{(i)}$,
$ Z^{(i+1)} := Z^{(i)}$
\item[2.2.] 
For $j=1,2, \ldots , n$:
\begin{description}
\item[2.2.1.]
Compute the function $\Upsilon_j^{ m^{(i+1)}_{[0,j]}, Z^{(i+1)}_{[j]}}$
\item[2.2.2.]
Find $\left(\hat m_0, \hat m_j \right)$, a fixed point of $\Upsilon_j^{ m^{(i+1)}_{[0,j]}, Z^{(i+1)}_{[j]}}$, and the corresponding treaty $\hat Z(x)= Z_j^{m^{(i+1)}_{[0,j]}, Z^{(i+1)}_{[j]}}(\hat m_0, \hat m_j,x)$
\item[2.2.3.]
Update
$m^{(i+1)}_0 := \hat m_0$,
$\,\,\, m^{(i+1)}_j := \hat m_j$,
$\,\,\, Z^{(i+1)} := \hat Z$
\end{description}
\end{description}
\end{description}

\subsection{Issues in practical implementation of the numerical scheme} \label{SS numerical issues}

The scheme above is outlined in the broadest possible terms, and it must be translated into some concrete algorithm to be applied to any particular model.
Now, we discuss the main difficulties arising in the construction of viable algorithms and show how they can be overcome.

For brevity, we drop the superscripts, writing $Z_j$, $\Lambda_j$, $G_j$, etc., for $Z_j^{m_{[0,j]},Z_{[j]}}$, $\Lambda_j^{m_{[0,j]},Z_{[j]}}$, $G_j^{m_{[0,j]},Z_{[j]}}$, etc., being understood that such functions are defined based on some given parameters $m_{[0,j]}$ and some given functions $Z_{[j]}$.

\subsubsection{Computation of fixed points}

Due to its simplicity and quick convergence, Newton's algorithm seems an attractive method to compute the fixed points of the functions $\Upsilon_j^{ m^{(i+1)}_{[0,j]}, Z^{(i+1)}_{[j]}}$.
However, due to the constraints $0 \leq Z_j(x) \leq x$, the map $(m_0,m_j) \mapsto \Upsilon_j^{ m^{(i+1)}_{[0,j]}, Z^{(i+1)}_{[j]}}(m_0,m_j)$ may fail to be differentiable, even when the utility function $U$ and the premium calculation principle $\Psi_j$ are smooth.
This difficulty can be avoided by the following scheme.

Let $A= \{(x,z): 0 < z < x \}$, and consider a family of smooth functions $\left\{ \beta_\varepsilon: A \mapsto \mathbb R \right\}_{\varepsilon>0}$, such that:
\begin{align*}
&
\frac{\partial \beta_\varepsilon}{\partial z}(x,z) \leq 0, 
\qquad
\text{for every } 0 < z <x,
\\ &
\lim_{z \to 0^+} \beta_\varepsilon(x,z) = + \infty
\quad \text{and} \quad
\lim_{z \to x^-} \beta_\varepsilon(x,z) = - \infty,
\qquad 
\text{for every } x>0,
\\ &
\lim_{\varepsilon \to 0^+} \beta_\varepsilon(x,z) = 0,
\qquad
\text{uniformly in any compact set } K \subset A. 
\end{align*}
A suitable family is, for example, $\beta_\varepsilon (x,z)= \varepsilon \frac{x^{\alpha +1}}{1+x^{\alpha +1}} \left( \frac 1{z^\alpha} - \frac 1{(x-z)^\alpha} \right)$, with $\alpha>0$ constant.
Then, the equation
\begin{align*}
G_j (m_0,m_j,x,z) + \beta_\varepsilon (x,z)= 0
\end{align*}
defines implicitely one unique function $
Z_{j,\varepsilon} (m_0,m_j,x)$.
This function satisfies $0 < Z_{j,\varepsilon} (m_0,m_j,x) < x$ for every $x>0$, and $\lim\limits_{\varepsilon \to 0^+} Z_{j,\varepsilon} (m_0,m_j,x) = Z_j (m_0,m_j,x)$ uniformly with respect to $x$ on bounded intervals.
Substituting the approximation $Z_{j,\varepsilon}$ for $Z_j$ in the definition of the function $\Upsilon_j$, we obtain the approximation
\begin{align*}
&
\Upsilon_{j,\varepsilon} (m_0,m_j) =
\\ = &
\mathbb E^{\mu_X} \Bigg(
\Lambda_j
\left( X_j,
Z_{j,\varepsilon} (m_0,m_j,X_j) - \Psi_j(m_j)
\right),
Z_{j,\varepsilon} (m_0,m_j,X_j),
\left(Z_{j,\varepsilon} (m_0,m_j,X_j) \right)^2, \ldots
\\ & \hspace{70pt}
\ldots ,
\left(Z_{j,\varepsilon}(m_0,m_j,X_j)\right)^{k_j}
\Bigg) .
\end{align*}
Since the constraint $0 \leq Z_j(x) \leq x$ is not active when $Z_{j,\varepsilon}$ is substituted for $Z_j$, the function $\Upsilon_{j,\varepsilon} $ is differentiable, provided $U$ and $\Psi_j$ are sufficiently regular.

To compute the Jacobian matrix of $\Upsilon_{j,\varepsilon}$, consider the function 
\begin{align*}
d\Lambda_j (x,z) =
\left. \mathbb E\right|^{\mu_X}_{X_j=x} U^{\prime \prime} \left( c - \sum_{\ell \neq j} \left( X_\ell + \Psi_\ell(m_\ell) - Z_\ell(X_\ell)  \right) - X_j + z \right)
\end{align*}
(there is some abuse of notation in the symbol for this function).
Then, writing $m_j = \left(m_{j,1},m_{j,2}, \ldots , m_{j,k_j} \right)$, and $Z_{j,\varepsilon}$ for $Z_{j,\varepsilon}(m_0,m_j,X_j)$:
\begin{align*}
&
\frac{\partial}{\partial m_0} \mathbb E^{\mu_X} 
\Lambda_j
\left( X_j,
Z_{j,\varepsilon} 
\right) =
\mathbb E^{\mu_X} \left(
d \Lambda_j
\left( X_j,
Z_{j,\varepsilon} - \Psi_j
\right) 
\frac{\partial Z_{j,\varepsilon}}{\partial m_0} 
\right),
\\ &
\frac{\partial}{\partial m_{j,\ell}} \mathbb E^{\mu_X} 
\Lambda_j
\left( X_j,
Z_{j,\varepsilon} 
\right) =
\mathbb E^{\mu_X} \left(
d \Lambda_j
\left( X_j,
Z_{j,\varepsilon} - \Psi_j
\right) 
\left(\frac{\partial Z_{j,\varepsilon}}{\partial m_{j,\ell}} - \frac{\partial \Psi_j}{\partial m_{j,\ell}} \right)
\right),
\\ &
\frac{\partial}{\partial m_0} \mathbb E^{\mu_X} 
\left( Z_{j,\varepsilon} 
\right)^r =
r E^{\mu_X} 
\left(\left( Z_{j,\varepsilon} 
\right)^{r-1} \frac{\partial Z_{j,\varepsilon}}{\partial m_0} \right)
\qquad
r=1,2,\ldots, k_j,
\\ &
\frac{\partial}{\partial m_{j,\ell}} \mathbb E^{\mu_X} 
\left( Z_{j,\varepsilon} 
\right)^r =
r E^{\mu_X} 
\left(\left( Z_{j,\varepsilon} 
\right)^{r-1} \frac{\partial Z_{j,\varepsilon}}{\partial m_{j,\ell}} \right)
\qquad
r=1,2,\ldots, k_j,
\end{align*}
with
\begin{align*}
&
\frac{\partial Z_{j,\varepsilon}}{\partial m_0} = 
\frac{\sum\limits_{r=1}^{k_j} \frac{\partial \Psi_j}{\partial m_{j,r}}r Z_{j,\varepsilon}^{r-1}}{  d\Lambda_j(x,Z_{j,\varepsilon}-\Psi_j) - \sum\limits_{r=2}^{k_j} \frac{\partial \Psi_j}{\partial m_{j,r}}r(r-1) Z_{j,\varepsilon}^{r-2} +
	\frac{\partial \beta}{\partial z}(x,Z_{j, \varepsilon}) },
\\ &
\frac{\partial Z_{j,\varepsilon}}{\partial m_{j,\ell}} = 
\frac{
d\Lambda_j(x,Z_{j,\varepsilon}-\Psi_j) \frac{\partial \Psi_j}{\partial m_{j,\ell}} + m_0 \sum\limits_{r=1}^{k_j} \frac{\partial^2 \Psi_j}{\partial m_{j,\ell} \partial m_{j,r}} r Z_{j,\varepsilon}^{r-1}
}{  d\Lambda_j(x,Z_{j,\varepsilon}-\Psi_j) - \sum\limits_{r=2}^{k_j} \frac{\partial \Psi_j}{\partial m_{j,r}}r(r-1) Z_{j,\varepsilon}^{r-2} +
\frac{\partial \beta}{\partial z}(x,Z_{j, \varepsilon}) }.
\end{align*}

\subsubsection{Convergence criteria}

A natural convergence criterion is convergence in the variables $m$.
That is, to exit the main loop when $\| m^{i+1} - m^i \|$ is less than some tolerance.
However, convergence in this sense may fail to occur.
Since each iteration of step 2.2.2 selects the treaty for risk $j$ given the current choice of the remaining treaties, the existence of limit cycles is not easy to rule out.
However, under the assumptions of Corollary \ref{C higher moments}, step 2.2.2 computes a treaty $Z_j$ which is optimal for risk $j$ given the current choices of the remaining treaties $Z_{[j]}$ (for $\ell < j$, $Z_\ell$ is a treaty already computed during the current cycle of the main loop, while for $\ell >j$ $Z_\ell$ is a treaty computed in the previous cycle).
Thus, the overall performance of the strategy $Z=(Z_1,Z_2, \ldots, Z_n)$ improves at each iteration of step 2.2.2 and, a fortiori, at each iteration of the main loop.
Thus, to exit the main loop when $\mathbb E U(L_{Z^{(i+1)}}) - \mathbb E U(L_{Z^{(i)}})$ is less than some tolerance is a criterion that guarantees convergence in a finite number of cycles and provides a strategy where the treaty for each risk is (approximately) optimal, given the choice of the remaining treaties.

\subsubsection{Large number of risks}

The general scheme outlined in Section \ref{SS numerical scheme} translates into algorithms which are nontrivial but are well within the possibilities of common desktop computers, when the number of risks is very small (say, two or three).
However, for practical applications, an algorithm able to deal at least with a moderately large number of risks is highly desirable.

Each evaluation of the functions $\Lambda_j$ and $d\Lambda_j$ is a quadrature in $\mathbb R_+^{n-1}$, and the number of such evaluations is proportional to $n$, the number of risks.
Thus, if quadratures are computed by usual rectangular mesh methods (say, Gauss, etc.), the computational workload grows exponentially with respect to the number of risks and therefore the algorithm cannot be applied beyond very small numbers of risks.
However, there are quadrature methods that do not rely on rectangular meshes.
Such methods are typically quite inefficient in low dimensions but their workload grows at lower rates with the number of dimensions.

Examples of such methods which are easy to implement are Monte Carlo methods.
Their efficiency depends mainly on the sample size required to achieve the desired accuracy.
We argue briefly that, for the problem under consideration, this sample size is likely to grow very slowly with the number of risks.

Let $X= \sum\limits_{i=1}^n X_i$ be the aggregate risk.
It is natural to assume that $X$ represents the total (actuarial) risk exposure of a given insurance company, and that different values of $n$ reflect different levels of desaggregation of the total risk and hence different levels of detail in its management.
Under this scenario, the aggregate risk does not depend on $n$ and
\begin{align}
\label{Eq desagregated bound}
0 \leq
\sum_{i=1}^n\left(X_i + \Psi_i(m_i) - Z_i(X_i) \right) \leq
X + \sum_{i=1}^n P_i(X_i)
\end{align}
holds for every $n$.

The sample size required to obtain a given accuracy depends mainly in the tail behaviour of the quantity being computed, which in our setting is a function of the middle term in \eqref{Eq desagregated bound}.
Thus, inequality \eqref{Eq desagregated bound} suggests that any increase in tail weight when $n$ increases is driven by subadditivities in the premium calculation principles, and therefore should be quite modest.

Under the assumptions above, a version of our scheme where one-dimensional quadratures are computed by mesh methods and $(n-1)$-dimensional quadratures are computed by Monte Carlo generates a computational loading that grows only slightly supralinearly with the dimension $n$.
This is roughly the lowest growth that can realistically be expected in a problem of increasing number of dimensions.

\subsection{Beyond the assumptions of Corollary \ref{C higher moments}} \label{SS beyond deterministic}

If the assumptions of Corollary \ref{C higher moments} fail, then we expect optimal strategies to be randomized strategies.

In that case, the (random) treaty for risk $i$ can be described as a family of Borel probability measures $\left\{ \nu_{i,x} \right\}_{x \geq 0}$ such that, for each $x\geq 0$, $\nu_{i,x}$ is concentrated in the interval $[0,x]$. 
We can follow the same argument as in Section \ref{SS fixed points}, provided we redifine the function $\Lambda_i$ as
\begin{align*}
\Lambda_i
(x,z)= \mathbb E^\nu_{(X_{[i]},Z_{[i]})|X_i=x}U^\prime \left(c - \sum_{j \neq i}(X_j + \Psi_j(m_j)-Z_j) - X_i + z \right).
\end{align*}
Conditions \eqref{Eq Z implicit 1}, \eqref{Eq Z implicit 2} and \eqref{Eq Z implicit 3} are generalized to:
\begin{align}
& \label{Eq nu implicit 1}
\nu_{i,x} = \delta_0,
\qquad \text{if }
\left\lceil \Lambda_i \right\rceil (x,z) - m_0 \sum_{j=1}^{k_i} \frac{\partial \Psi_i}{\partial u_j}(m_i) j z^{j-1} < 0 \quad \forall z \in [0,x],
\\ & \label{Eq nu implicit 2}
\nu_{i,x} = \delta_x,
\qquad \text{if }
\left\lfloor \Lambda_i \right\rfloor (x,z) - m_0 \sum_{j=1}^{k_i} \frac{\partial \Psi_i}{\partial u_j}(m_i) j z^{j-1} > 0 \quad \forall z \in [0,x],
\\ & \notag
\text{the support of } \nu_{i,x} \text{  is contained in the set} 
\\ & \hspace{10mm} \notag
A_{i,x} =
\left\{
z \in [0,x]: 
\left\lfloor \Lambda_i \right\rfloor (x,z) \leq m_0 \sum_{j=1}^{k_i} \frac{\partial \Psi_i}{\partial u_j}(m_i) j z^{j-1} \leq \left\lceil \Lambda_i \right\rceil (x,z)
\right\},
\\ & \label{Eq nu implicit 3}
\hspace{110mm} \text{in other cases},
\end{align}
where $\delta_a$ denotes the Dirac measure concentrated at the point $a$.

Since condition \eqref{Eq nu implicit 3} does not define one unique measure, some further condition is required to obtain a generalization of function $\Upsilon_i$.

One possible way to overcome this difficulty is to chose $\nu_{i,x}$ minimizing the distance between the points $(m_0,m_i)$ and $\mathbb E^{\nu_{i,x}} \left( \Lambda_i\left(x, Z_i - \Psi_i(m_i)\right) , Z_i,Z_i^2, \ldots , Z_i^{k_i} \right)$.
In general, this does not yield a unique measure $\nu_{i,x}$ but, since the sets
\begin{align*}
\left\{
\mathbb E^\eta \left( \Lambda_i\left(x, Z_i - \Psi_i(m_i)\right) , Z_i,Z_i^2, \ldots , Z_i^{k_i} \right) : \mathrm{supp}(\eta) \subset A_{i,x}
\right\}
\qquad
x \geq 0
\end{align*}
are convex and closed, it yields one unique Borel function $x \mapsto \mathbb E^{\nu_{i,x}} \left( \Lambda_i\left(x, Z_i - \Psi_i(m_i)\right) , Z_i,Z_i^2, \ldots , Z_i^{k_i} \right)$, and the expression
\begin{align*}
&
\Upsilon_i (m_0,m_i) =
\int_0^{+\infty}
\mathbb E^{\nu_{i,x}} \left( \Lambda_i\left(x, Z_i - \Psi_i(m_i)\right) , Z_i,Z_i^2, \ldots , Z_i^{k_i} \right) \, \mu_{X_i}(dx)
\end{align*}
is a well defined function.

If the function $\Upsilon_i$ defined above is continuous, then Brower's fixed point theorem guarantees existence of a fixed point.

\subsection{Numerical illustration}
\label{SS numerical example}

We present here numerical examples illustrating the theoretical
results of this work and the proposed numerical algorithm. 
We consider two risks, $X_1$ and $X_2$, with distribution
functions given by $F_1(x)=1-e^{-x}$  and
$F_2(x)=1-\left(\frac{4}{4+x}\right)^5$, respectively, such that
$E[X_1]=E[X_2]=1$ and $Var[X_1]=1$ and $Var[X_2]=5/3$. 
Three different dependence structures are analysed, by means of
copulas, and two different premium calculation
principles are considered: (i) the expected value principle, in which case the loadings
are chosen to be $\theta_1=0.3$ and $\theta_2=0.5$ for risks $X_1$ and
$X_2$, respectively; (ii) the standard deviation principle, for
which the premium loadings are $\theta_1=\theta_2=0.5$. The results are
presented in comparison with the independence case.

Regarding the dependence structure, we will consider Frank's Copula, 
given by:
\begin{equation*}
\label{Frank}
C_{\alpha}(u_1,u_2)=-\frac{1}{\alpha}\log\left(
1+\frac{(e^{-\alpha\,
		u_1}-1)(e^{-\alpha\,
		u_2}-1)}{e^{-\alpha}-1}
\right) .
\end{equation*}
This copula is known to have no upper nor lower tail dependence. When
the copula parameter $\alpha=0$, the random variables are independent,
if $\alpha >0$ there is a positive dependence and when $\alpha<0$ the
dependence is negative. 
We consider two cases: $\alpha=10$, and
$\alpha=-10$. 
We also consider a copula, with no general expression, which includes
positive and negative dependencies in different regions of the domain.
This copula is defined by means of a $6\times 6$ matrix of values in the square $[0,1]^2$, using interpolation by the independence copula between those values. The matrix values are chosen to guarantee that the copula is grounded and 2d-increasing.
The numerical solutions of the optimal treaties are presented in Figure \ref{results}.

\begin{figure}[ht]
	\begin{center}
		\includegraphics[width=0.31\textwidth]{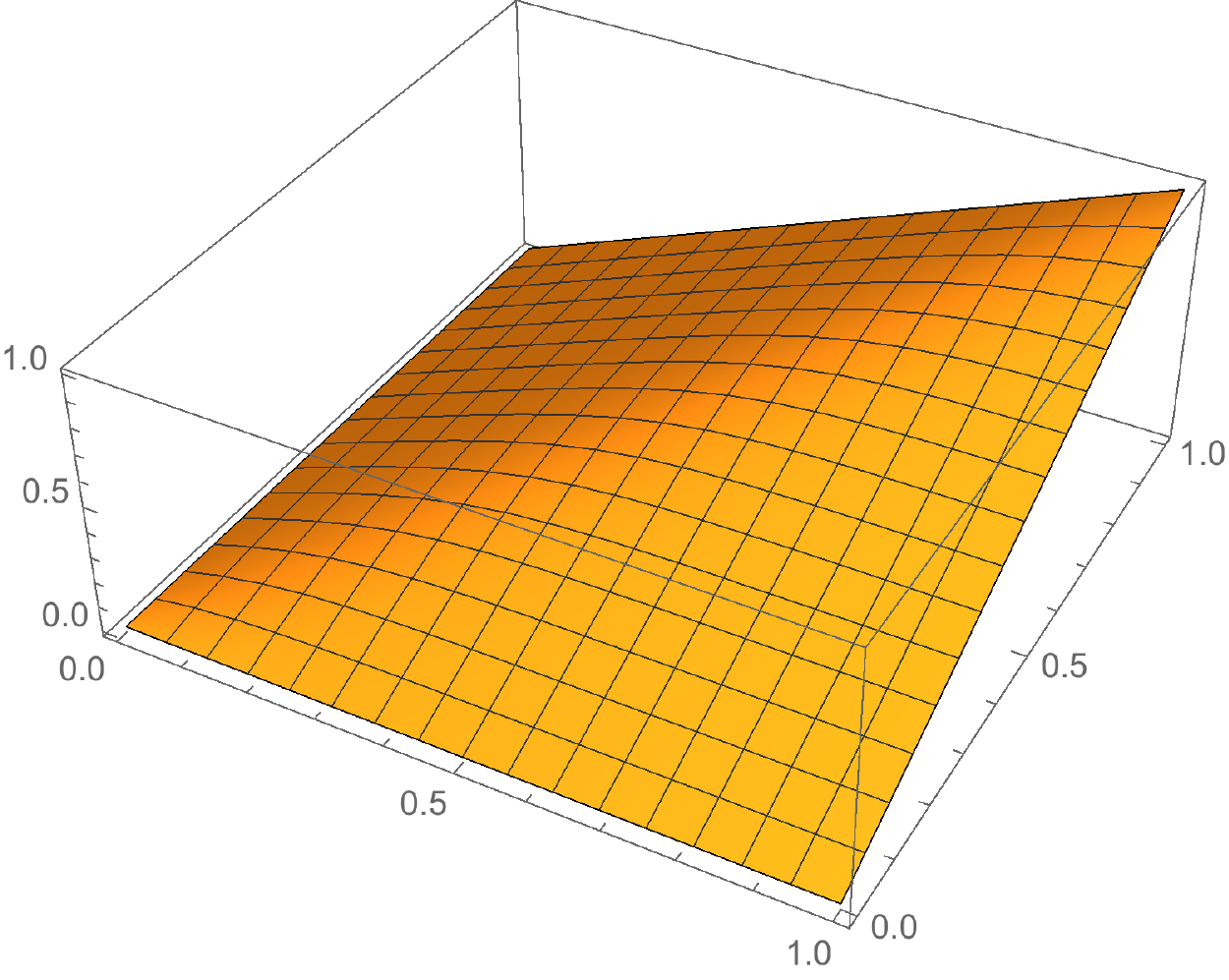} 
		\hspace{0.1cm}
		\includegraphics[width=0.31\textwidth]{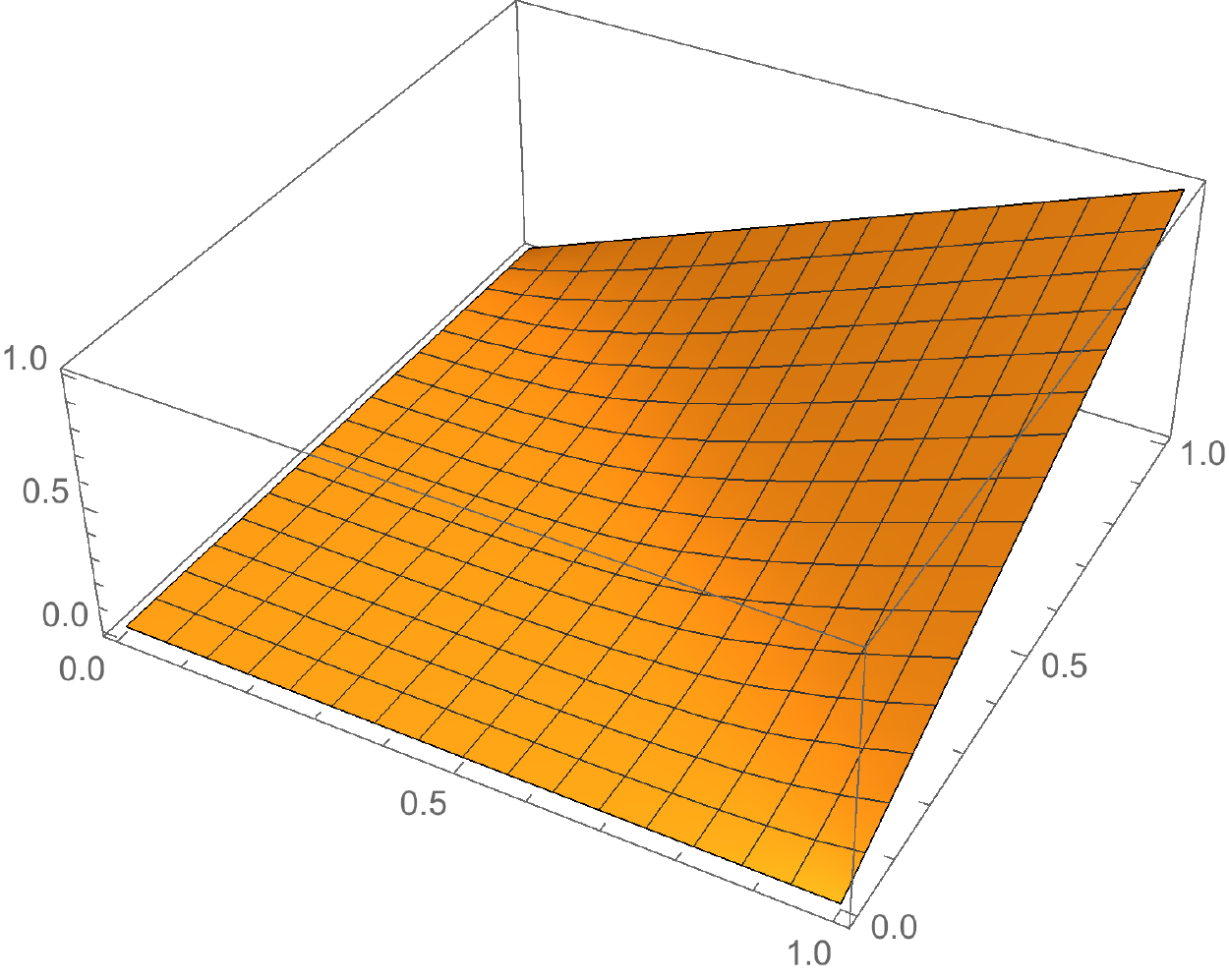} 
		\hspace{0.1cm}
		\includegraphics[width=0.31\textwidth]{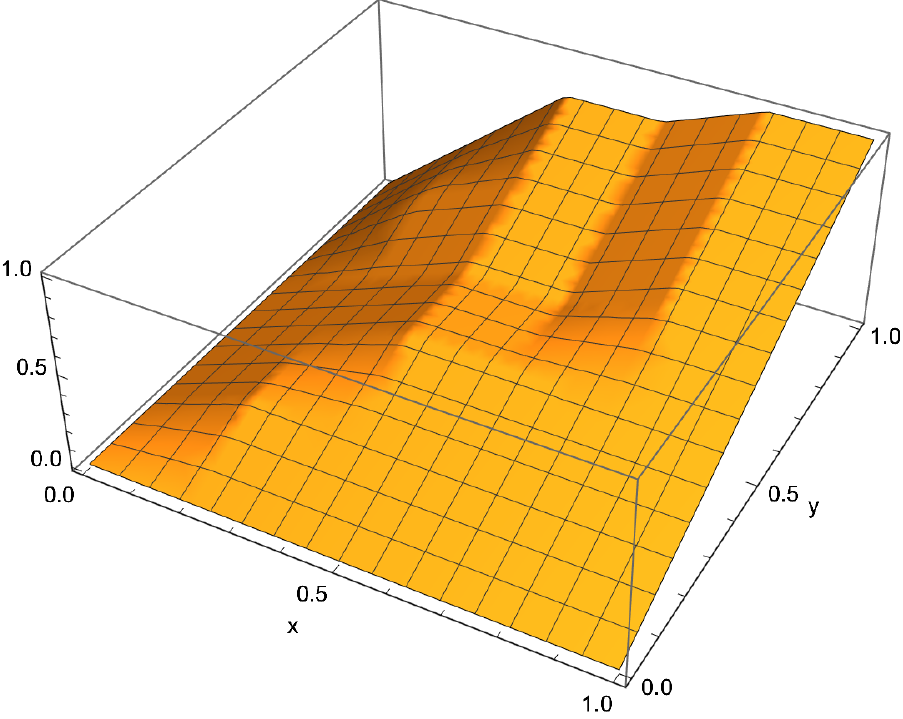} 
	\end{center}
	\caption{The three copulas considered. Left: Frank's copula
		with $\alpha=10$; Middle: Frank's copula with $\alpha=-10$; Right:
		copula including positive and negative dependencies.}
	\label{Copulas}
\end{figure}

\begin{figure}[ht]
	\begin{center}
		\includegraphics[width=0.24\textwidth]{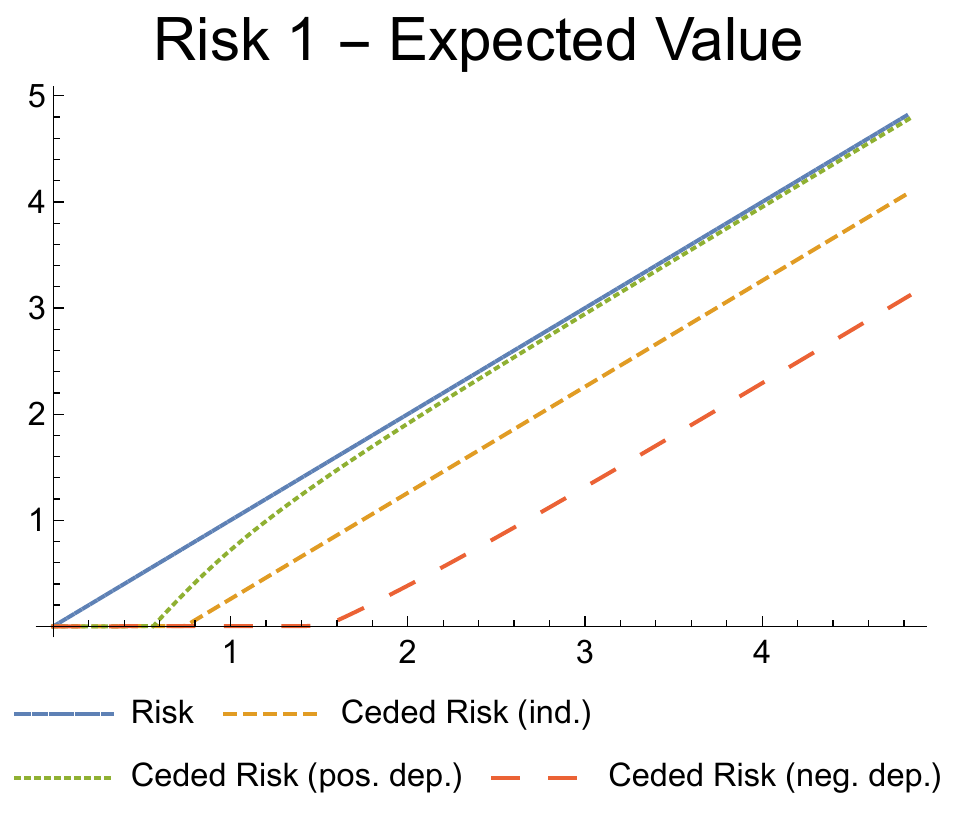}
		\hspace{0.0cm}
		\includegraphics[width=0.24\textwidth]{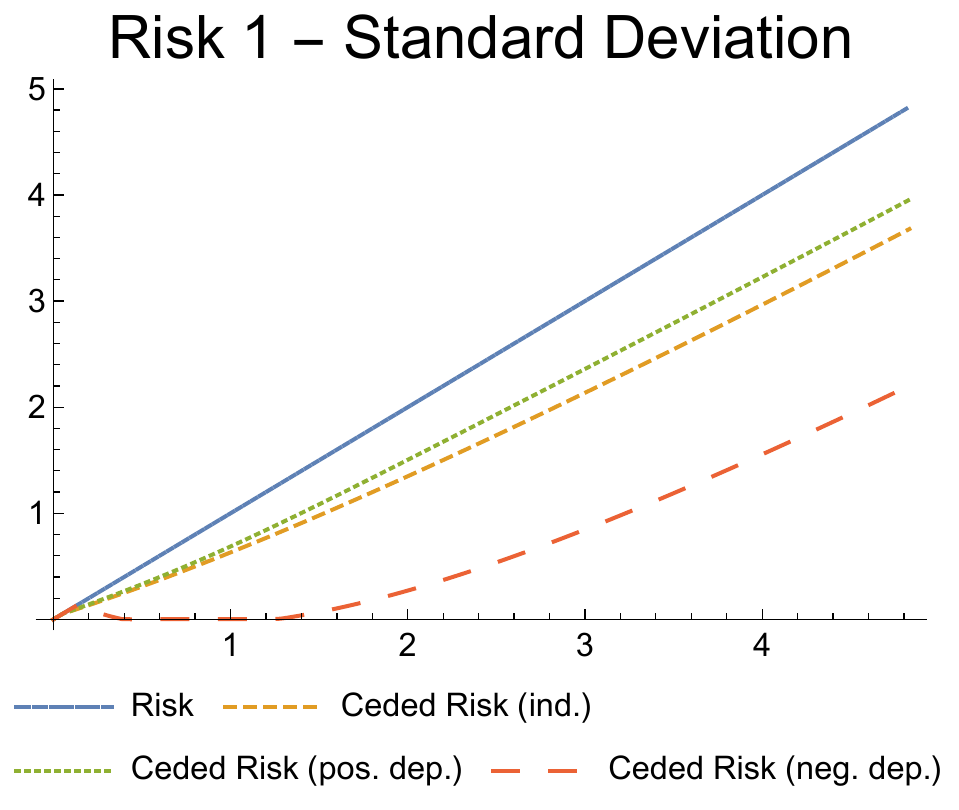}
		\hspace{0.0cm}
		\includegraphics[width=0.22\textwidth]{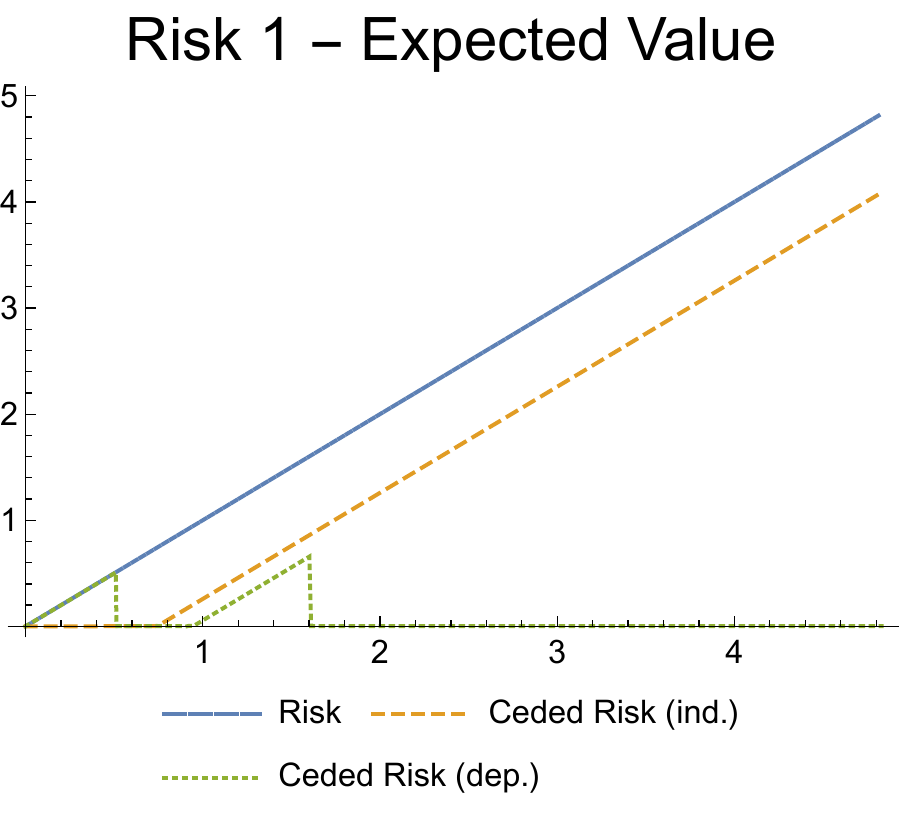}
		\hspace{0.0cm}
		\includegraphics[width=0.22\textwidth]{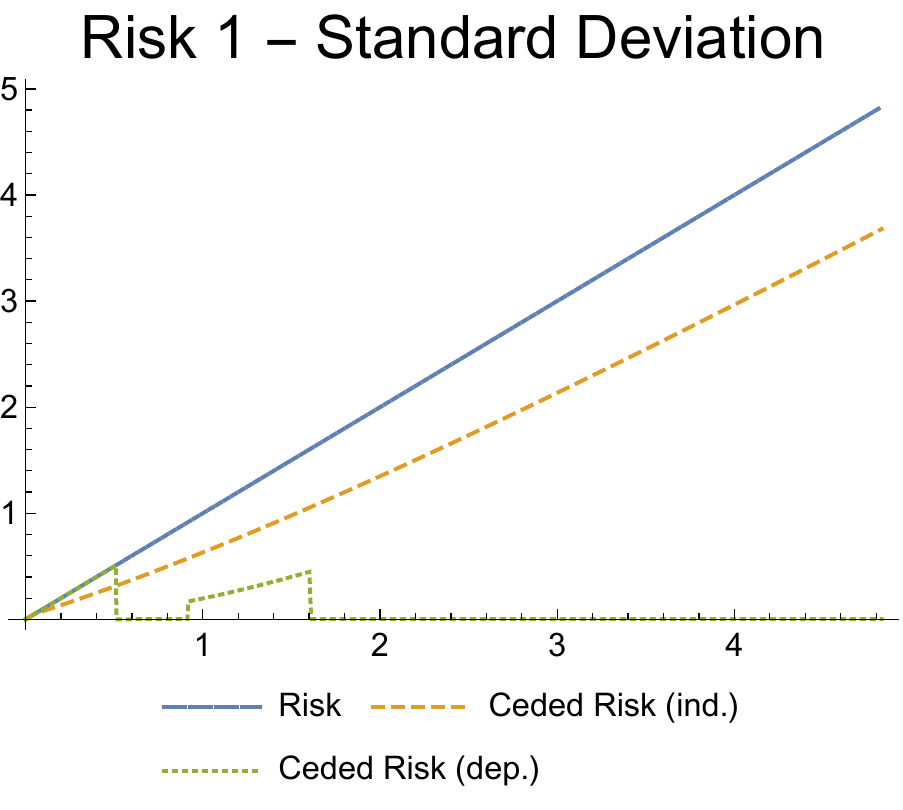}
		
		\includegraphics[width=0.24\textwidth]{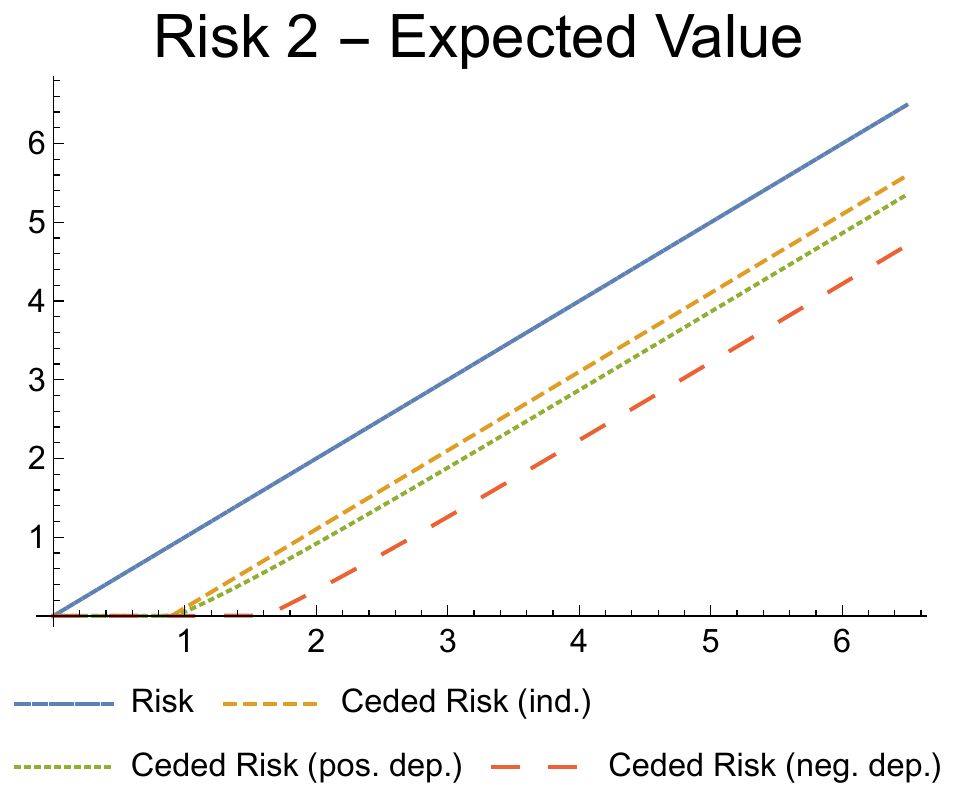}
		\hspace{0.0cm}
		\includegraphics[width=0.24\textwidth]{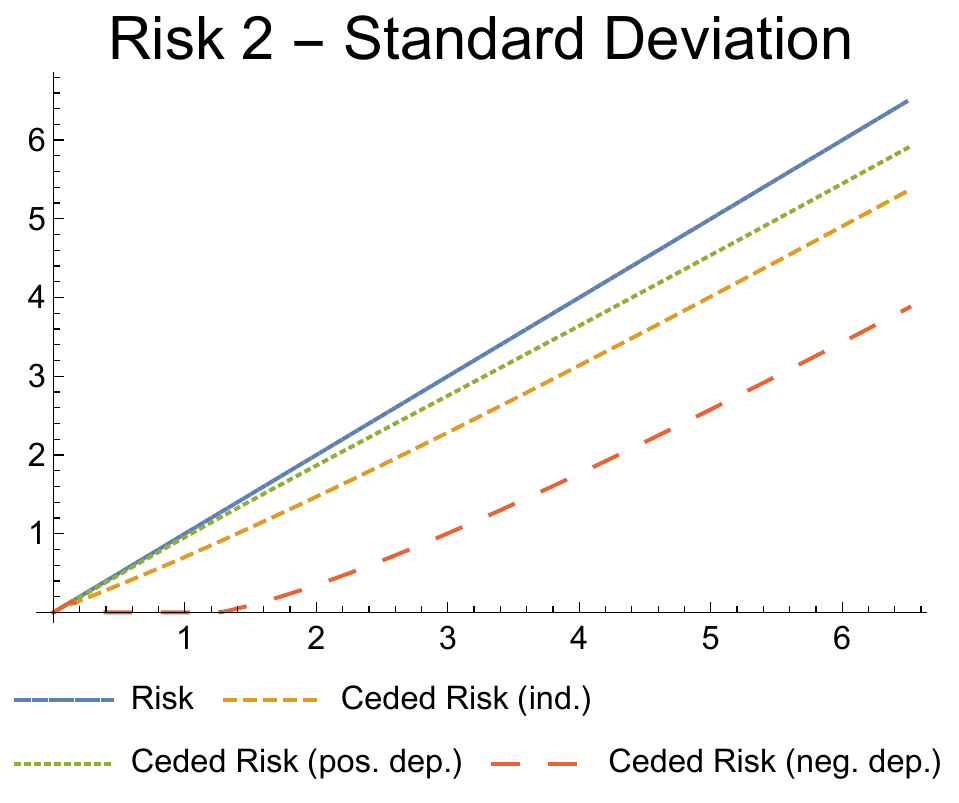}
		\hspace{0.0cm}
		\includegraphics[width=0.22\textwidth]{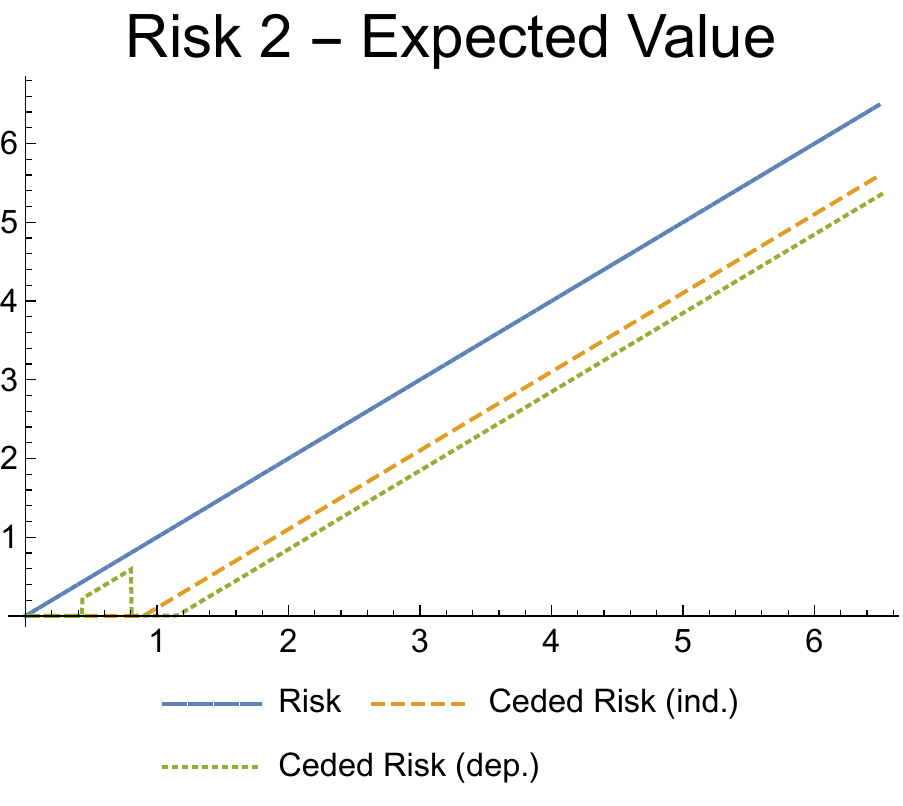}
		\hspace{0.0cm}
		\includegraphics[width=0.22\textwidth]{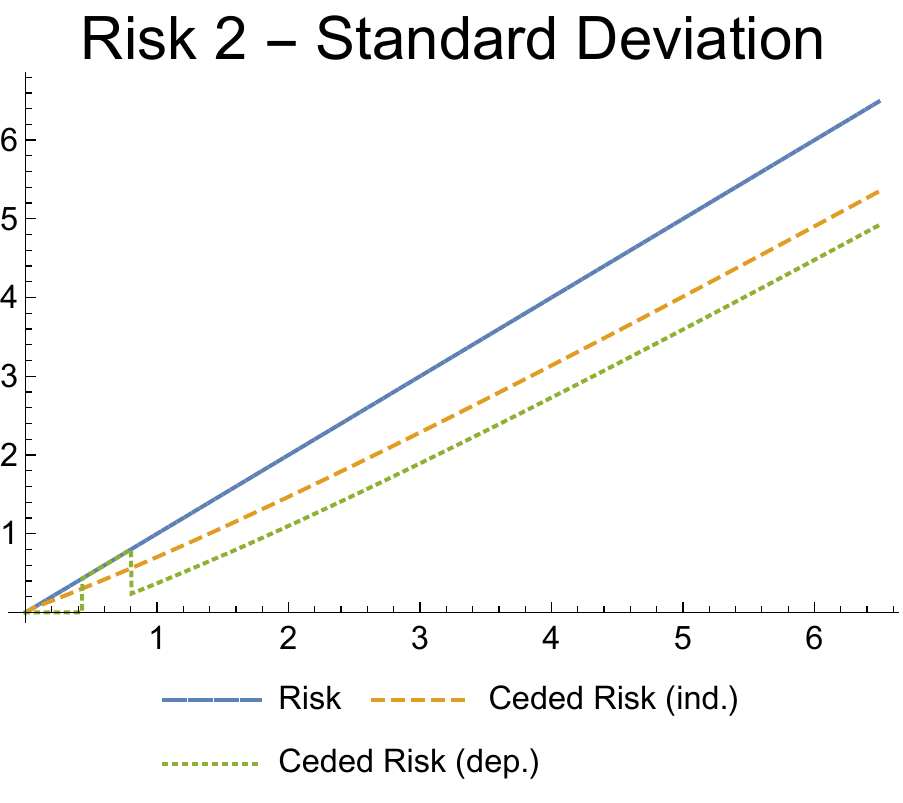}
	\end{center}
\caption{Optimal reinsurance treaties (ceded risk) for risks $X_1$ and $X_2$ for the two premium calculation principles when
(i) the risks are dependent through Frank's copula with $\alpha=10$ (dotted green line) and $\alpha=-10$ (big dashed red line), first and second columns;
(ii) the risks are dependent through a copula including positive and negative dependencies, third and fourth columns.
In blue is 	represented  the underlying risk and in yellow (dashed) the optimal solution for the independent case.
}
\label{results}
\end{figure}

In the case of positive dependence (Figure \ref{results}, first and second columns, dotted green line) we observe that for the expected value principle, the optimal treaty for risk $X_1$ (light tailed) is a decreasing function of the retained risk (notice that the plot represents the ceded risk).
In this case, the optimal treaty under independence is the stop loss. 
Regarding the standard deviation principle, the optimal treaty under dependence has similar behaviour to that of the independent  case, but the ceded risk is higher when positive dependencies are present.

When the risks are negatively dependent (Figure \ref{results}, first and second columns, big dashed red line), the optimal ceded risk is always lower than the optimal ceded risk under independence, whatever the premium principle and for both
risks. 
When the standard deviation principle is considered, the optimal ceded risk is a non-monotonic function.
It is zero on a neighbourhood of the expected value, being the minimum between a convex function and the claim amount. 

In the case of more complex dependencies, including negative and positive dependencies along the domain (Figure \ref{results}, third and fourth columns), the optimal treaty exhibits abrupt swings in the amount of ceded risk. 
For the light-tailed risk ($X_1$), the optimal treaty cedes all the risk at small claim amounts and cedes no risk on the tail, with some risk ceded for losses between the expected value and approximately 1.5 times the expected value.
This pattern persists for the two premia.
For the heavy-tailed risk ($X_2$) it is optimal to cede some risk for losses below the expected value, while in the tail the ceded risk is similar, but below, to the independent case.
Obviously, the behaviour of the ceded risk in this case is highly dependent on the dependence structure and this example serves to show that dependencies may lead to unexpected optimal treaties, specially when more intricate dependencies are at stake.

\appendix

\section{Example of an optimality criterion which is concave in $\mathcal Z$ and nonconcave in $\mathcal H_X$}\label{Appendix concavity}

In this appendix, we show that convexity over $\mathcal H_X$ of the premia does not imply concavity of the functional $\nu \mapsto \mathbb E^\nu U(L_\nu)$, even when $U$ is strictly concave.

For convenience, we consider the case when $X_1$ is exponentially distributed with parameter $\lambda>0$.
Further, assume that the utility function is exponential, i.e.,
\[
U(x)= - e^{-Rx}, \qquad x \in \mathbb R,
\]
and the premium for risk $1$ is computed by the expected value principle, $P_1(\nu) = (1+\theta)\mathbb E^\nu Z_1$ ($R, \theta>0$ are constant parameters).
Consider two deterministic strategies
\begin{align*}
&
Z_1(x) = (x-m)^+, \qquad Z_i(x)= x \quad i=2, \ldots , n;
\\ &
\tilde Z_1(x) = x \chi_{[0,a]}(x) + (x-b)^+, \qquad \tilde Z_i(x)= x \quad i=2, \ldots , n ,
\end{align*}
where $m$, $a$ and $b$ are positive parameters with $a<b$.
These treaties correspond to the measures $\nu, \eta \in \mathcal H_X$ such that
\begin{align*}
\nu (dx,dz) = &
\left(
\delta_{(x_1-m)^+}(dz_1) \times \bigotimes\limits_{i=2}^n \delta_{x_i}(dz_i)
\right) \mu_X(dx),
\\
\eta (dx,dz) = &
\left(
\left(\chi_{[0,a]}(x_1)\delta_{x_1}+ \chi_{]a,b[}(x_1) \delta_0 + \chi_{[b,+\infty[}(x_1)\delta_{(x_1-b)^+}\right)(dz_1) \times \bigotimes\limits_{i=2}^n \delta_{x_i}(dz_i)
\right) \mu_X(dx),
\end{align*}
respectively.
In this setting,
\begin{align*}
\mathbb E^\nu U(L_\nu) = &
- e^{R \left(\sum \limits_{i=2}^n P_i(\nu) - c \right)}
e^{R P_1(\nu)}\mathbb E^{\mu_{X_1}} \left(e^{R(X_1-Z_1(X_1))}\right)
\\
\mathbb E^\eta U(L_\eta) = &
- e^{R \left(\sum \limits_{i=2}^n P_i(\nu) - c \right)}
e^{R P_1(\eta)}\mathbb E^{\mu_{X_1}} \left(e^{R(X_1-\tilde Z_1(X_1))}\right).
\end{align*}
Further, for any $t \in ]0,1[$:
\begin{align*}
&
\mathbb E^{(1-t)\nu+ t \eta} U(L_{(1-t)\nu+ t \eta}) =
\\ = & 
- e^{R \left(\sum \limits_{i=2}^n P_i(\nu) - c \right)}
e^{R \left((1-t) P_1(\nu) + t P_1(\eta)\right)} \times
\\ & \times \left(
(1-t) \mathbb E^{\mu_{X_1}} \left(e^{R(X_1-Z_1(X_1))}\right)
+
t \mathbb E^{\mu_{X_1}} \left(e^{R(X_1-\tilde Z_1(X_1))}\right)
\right) 
\\ = &
- e^{R \left(\sum \limits_{i=1}^n P_i(\nu) - c \right)}
e^{t R\left( P_1(\eta) - P_1(\nu) \right)} \times
\\ & \times \left(
\mathbb E^{\mu_{X_1}} \left(e^{R(X_1-Z_1(X_1))}\right)
+
t \mathbb E^{\mu_{X_1}} \left(e^{R(X_1-\tilde Z_1(X_1))} - e^{R(X_1-Z_1(X_1))} \right)
\right) .
\end{align*}
We will show that the second derivative (with respect to $t$) of the last expression, can be positive or negative depending on the choice of the parameters $m$, $a$, and $b$, and therefore the optimality criterion is neither concave nor convex in $\mathcal H_x$.

Suppose that $R \neq \lambda$, and let
\begin{align*}
A = &
R(P_1(\eta)-P_1(\nu))=
R(1+\theta) \frac{e^{-\lambda a}\left( e^{\lambda a} -1 - \lambda a \right) + e^{-\lambda b} - e^{-\lambda m}} {\lambda};
\\
B = &
\mathbb E^{\mu_{X_1}}\left( e^{R(X_1-Z_1(X_1))} \right) =
\frac R{R-\lambda} e^{(R-\lambda) m} - \frac \lambda{R-\lambda};
\\
C = &
\mathbb E^{\mu_{X_1}}\left( e^{R(X_1-\tilde Z_1(X_1))} \right) =
1 - e^{-\lambda a} - \frac \lambda{R-\lambda} e^{(R-\lambda) a} + \frac R{R-\lambda} e^{(R-\lambda) b} .
\end{align*}
Since $\frac{d^2}{d t^2}\left(e^{At} \left( B +(C-B) t \right) \right) = e^{At} A \left( AB+2(C-B) +A(C-B) t \right)$, we only need to study the sign of the quantity $A(AB+2(C-B))$.

We introduce the new parameter $\varepsilon = \mathbb E^{\mu_{X_1}}\left( \tilde Z_1(X_1) - Z_1(X_1) \right)$, that is
\begin{align*}
e^{-\lambda m} = e^{-\lambda a} \left( e^{\lambda a} - 1 - \lambda a\right) + e^{-\lambda b} - \lambda \varepsilon .
\end{align*}
Therefore, for small $a>0$ and small $\varepsilon \in \mathbb R$, we have
\begin{align*}
A = &
R(1+\theta) \varepsilon;
\\
B = &
\frac{R}{R-\lambda}e^{(R-\lambda)b} -
\frac{\lambda}{R-\lambda} -
\frac{\lambda R}{2}e^{Rb} a^2 +
R e^{Rb} \varepsilon +
o(a^2+|\varepsilon| );
\\
C = &
\frac{R}{R-\lambda}e^{(R-\lambda)b} -
\frac{\lambda}{R-\lambda} -
\frac{\lambda R}{2} a^2 +
o(a^2 ).
\end{align*}
Thus,
\begin{align*}
&
A(AB+2(C-B)) =
\\ = &
R(1+\theta ) \varepsilon \left(
R(1+\theta) \varepsilon \left( \frac{R}{R-\lambda}e^{(R-\lambda)b} -
\frac{\lambda}{R-\lambda} \right) +
2 \left( \frac{\lambda R}2 \left(e^{Rb}-1\right) a^2 -Re^{Rb} \varepsilon \right)
\right) +
\\ & +
o(a^4+ \varepsilon^2) 
\\ = &
R^2e^{Rb}(1+\theta ) \varepsilon \left(
\left(
(1+\theta) \left( \frac{R e^{-\lambda b}}{R-\lambda} -
\frac{\lambda e^{-R b}}{R-\lambda} \right)
-2
\right) \varepsilon +
\lambda \left(1-e^{-Rb} \right) a^2  \right) +
o(a^4+ \varepsilon^2) .
\end{align*}
Fix arbitrary $b>0$ and small $a \in ]0,b[$.
Since $\lambda>0$ and $R>0$, 
$\left(
(1+\theta) \left( 
\frac{R e^{-\lambda b}}{R-\lambda} -
\frac{\lambda e^{-R b}}{R-\lambda} \right)
-2 \right) 
\varepsilon +
\lambda \left(1-e^{-Rb} \right) a^2  >0$ 
for every $\varepsilon$ sufficiently small.
Thus, $A(AB+2(c-B))$ has the same sign as $\varepsilon$.

\vspace{0.5cm}
\noindent \textbf{Acknowledgments}

\noindent The authors acknowledge financial support from FCT -- Funda\c c\~ao para a Ci\^encia e Tecnologia (Portugal), national funding, through research grant UIDB/05069/2020.


\bibliographystyle{apalike}

\end{document}